\documentclass[12pt,a4paper]{amsart}

%%%cambia i margini laterali della pagina (lo standard e` 1.5in)
\usepackage[margin=1.2in]{geometry}
%%%interlinea 1.2: istruzione dopo il begin document

\usepackage{amsmath} 
\usepackage{amsthm} 
\usepackage{amsfonts} 
\usepackage{amssymb}

\usepackage{tikz}

\newcommand\ganz{\mathbb Z} 
\newcommand\real{\mathbb R}

\newcommand\gen{\text{\it Span}}

\renewcommand\({\left(}
\renewcommand\){\right)}

\newcommand\ssc{\scriptstyle}

\newcommand\la{\langle}
\newcommand\ra{\rangle}

\newcommand\ov{\overline}

\newcommand\mc{\mathcal}
\newcommand\mf{\mathfrak}

\newtheorem*{thm*}{Theorem}
\newtheorem{thm}{Theorem}[section]
\newtheorem{pro}[thm]{Proposition}
\newtheorem{lem}[thm]{Lemma}
\newtheorem{cor}[thm]{Corollary}
\newtheorem*{cor*}{Corollary}

\theoremstyle{definition}\newtheorem{defi}{Definition}[section]
\theoremstyle{definition}\newtheorem{rem}{Remark}[section]
\theoremstyle{definition}
\theoremstyle{definition}\newtheorem{exa}{Example}[section]

%%%%%%%%%%% Dynkin graphs inizio

\newcommand\Anfi
{\draw{(0,0)--(1.5,0)
           (2.5,0)--(3.5,0)
           (6.5,0)--(8,0)};
\draw[dashed]{ (1.5,0)--(2.5,0)(3.5,0)--(6.5,0)};
\foreach \x in {0, 1, 3, 7, 8}
\draw[fill=white]{(\x,0) circle(3pt)};}

\newcommand\An
{\draw{(0,0)--(4,1)--(8,0)};
\draw[fill=black]{(4,1) circle(3pt)};
\Anfi}

\newcommand\Bnf
{\draw{(1,0)--(2,0)--(2.5,0) 
 (5.5,0)--(6,0)--(7,0)};
\draw[dashed]{ (2.5,0)--(5.5,0)};
\draw{(7,-1.5pt)--(8,-1.5pt)};
\draw{(7,1.5pt)--(8,1.5pt)};
\draw{(7.4,.15)--(7.55,0)--(7.4,-.15)};
\foreach \x in {1,2,6,7,8}
\draw[fill=white]{(\x,0) circle(3pt)};}

\newcommand\Bn
{\draw{ (.3,-.7)--(1,0)--(.3,.7)};
\draw[fill=white]{(.3,.7) circle(3pt)};
\draw[fill=black]{(.3,-.7) circle(3pt)};
\Bnf}

\newcommand\Cnf
{\draw{(1,0)--(2,0)--(2.5,0) (5.5,0)--(6,0)--(7,0)};
\draw[dashed]{ (2.5,0)--(5.5,0)};
\draw{(7,-1.5pt)--(8,-1.5pt)};
\draw{(7,1.5pt)--(8,1.5pt)};
\draw{(7.6,.15)--(7.45,0)--(7.6,-.15)};
\foreach \x in {1,2,6,7,8}
\draw[fill=white]{(\x,0) circle(3pt)};}

\newcommand\Cn
{\draw{(0,-1.5pt)--(1,-1.5pt)};
\draw{(0,1.5pt)--(1,1.5pt)};
\draw{(.4,.15)--(.55,0)--(.4,-.15)};
\draw[fill=black]{(0,0) circle(3pt)};
\Cnf}

\newcommand\Dnf
{\draw{(1,0)--(2,0)--(2.5,0) 
(5.5,0)--(6,0)--(7,0)--(7.7,.7) (7,0)--(7.7,-.7)};
\draw[dashed]{ (2.5,0)--(5.5,0)};
\foreach \x in {1,2,6,7}
\draw[fill=white]{(\x,0) circle(3pt)};
\draw[fill=white]{(7.7,.7) circle(3pt)};
\draw[fill=white]{(7.7,-.7) circle(3pt)};}

\newcommand\Dn
{\draw{ (.3,-.7)--(1,0)--(.3,.7)};
\draw[fill=white]{(.3,.7) circle(3pt)};
\draw[fill=black]{(.3,-.7) circle(3pt)};
\Dnf}

\newcommand\Eseif
{\foreach \x in {0, 1, 2,3}
\draw{(\x,1)--(\x+1,1)};
\draw{(2,1)--(2,0)};
\foreach \x in {0, 1, 2,3,4}
\draw[fill=white]{(\x,1) circle(3pt)};
\draw[fill=white]{(2,0) circle(3pt)};}

\newcommand\Esei
{\draw{(2,0)--(2,-1)};
\draw[fill=black]{(2,-1) circle(3pt)};
\Eseif}

\newcommand\Esettef
{\foreach \x in {1, 2,3,4,5}
\draw{(\x,0)--(\x+1,0)};
\draw{(3,0)--(3,-1)};
\foreach \x in {1, 2,3,4,5,6}
\draw[fill=white]{(\x,0) circle(3pt)};
\draw[fill=white]{(3,-1) circle(3pt)};}

\newcommand\Esette
{\draw{(0,0)--(1,0)};
\draw[fill=black]{(0,0) circle(3pt)};
\Esettef}

\newcommand\Eottof
{\foreach \x in {0,1,2,3,4,5}
\draw{(\x,0)--(\x+1,0)};
\draw{(2,0)--(2,-1)};
\foreach \x in {0,1,2,3,4,5,6}
\draw[fill=white]{(\x,0) circle(3pt)};
\draw[fill=white]{(2,-1) circle(3pt)};}

\newcommand\Eotto
{\draw{(6,0)--(7,0)};
\draw[fill=black]{(7,0) circle(3pt)};
\Eottof}

\newcommand\Fquattrof
{\foreach \x in {1,3}
\draw{(\x,0)--(\x+1,0)};
\draw{(2,1.5pt)--(3,1.5pt)};
\draw{(2,-1.5pt)--(3,-1.5pt)};
\draw{(2.4,.15)--(2.55,0)--(2.4,-.15)};
\foreach \x in {1,2,3,4}
\draw[fill=white]{(\x,0) circle(3pt)};}
\newcommand\Fquattro
{\draw{(0,0)--(1,0)};
\draw[fill=black]{(0,0) circle(3pt)};
\Fquattrof}

\newcommand\Gduef
{\foreach\y in{0,2,-2}
\draw{(0,\y pt)--(1,\y pt)};
\draw{(.6,.15)--(.45,0)--(.6,-.15)};
\foreach \x in {0,1}
\draw[fill=white]{(\x,0) circle(3pt)};}

\newcommand\Gdue
{\draw{(1,0)--(2,0)};
\draw[fill=black]{(2,0) circle(3pt)};
\Gduef}
%%%%%%%%%%% Dynkin graphs fine
%%%%%%%%%%%%%% fine preambolo

\title{Root polytopes and Borel subalgebras }

\author{Paola Cellini}
\address{Dipartimento di Ingegneria e Geologia, Universit\`a di Chieti --
Pescara, Viale Pindaro 42, 65127 Pescara, Italy}
\email{pcellini@unich.it}

\author{Mario Marietti}
\address{Dipartimento di Ingegneria Industriale e Scienze Matematiche, Universit\`a Politecnica delle Marche, Via Brecce Bianche, 60131 Ancona,  Italy}
\email{m.marietti@univpm.it}

\begin{document}
%%%scala l'interlinea di 1.2 
\setlength{\baselineskip}{1.2\baselineskip}

\begin{abstract}
Let $\Phi$ be a finite crystallographic irreducible root system and 
$\mathcal P_{\Phi}$ be the convex hull of the roots in $\Phi$. 
We give a uniform explicit description of the polytope $\mathcal P_{\Phi}$, analyze the algebraic-combinatorial structure of its faces, and provide connections with the Borel subalgebra of the associated
Lie algebra. We also give several enumerative results.
\end{abstract}

\maketitle

{\it Keywords:} Root system; Root polytope; Weyl group; Borel subalgebra; Abelian ideal

\section{Introduction}

Let $\Phi$  be a finite crystallographic root system in an $n$-dimensional Euclidean space $\mathcal E$, with scalar product $(\,\cdot\,,\cdot\, )$.  
We denote by $\mathcal P_{\Phi}$ the convex hull  of all the roots in $\Phi$, and we call it the {\em root polytope} of $\Phi$.  
The aim of this paper is to give a uniform explicit description of the root polytope $\mathcal P_{\Phi}$. 
\par
The root polytopes, or some strictly related objects, are studied in some recents papers. 
Some authors (see in particular  \cite{K1} and \cite{K2}), intend by {\it root polytope} the convex hull of the positive roots together with the origin, first introduced in \cite{GGP} for the root system of type $A_n$. 
We call this the {\it positive} root polytope and, if confusion may arise, we call $\mathcal P_\Phi$ the {\it complete} root polytope. 
In this paper we only consider the complete root polytope; our results have direct applications  to the study of the positive root polytopes of types $A_n$ and $C_n$ (see \cite{CM2}). 
In \cite{ABHPS}, some properties of the complete root polytopes are provided
for the classical types through a case by case analysis, using the usual coordinate
descriptions of the root systems. 
In our paper, we give case free statements and proofs for all types.  
\par
We provide both a simple global description of the root polytope, and a clear analysis of the combinatorial structure of its faces. 
Our results have also a direct interesting application in the study of partition functions. 
More precisely, for all $\gamma$ in the root lattice, let $|\gamma|$ be the minimum number of roots needed to express $\gamma$ as a sum of roots.
Chiriv\`i uses the results in Sections 3 through 5 to prove several properties of the map $\gamma \mapsto |\gamma|$; in particular, the map is piecewise quasi-linear with the cones over the facets of $\mathcal P_{\Phi}$ as quasi-linearity domains (see  \cite{Ch}). Finally, our results give information about $\bigcup\limits_{w\in W} w(A)$, where $A$ is the fundamental alcove of the affine Weyl group of $\Phi$, since this union set is the polar polytope of $\mathcal P_{\Phi}$  (see \cite{CM3}).
\par
 The Weyl group $W$ of $\Phi$ acts on $\mathcal P_\Phi$ thus, in order to describe $\mathcal P_\Phi$, it is natural to describe the orbits of its faces, of all dimensions, under the action of $W$. 
And, in order to describe the faces, we may describe a special representative in each $W$-orbit.
\par
The choice of a root basis provides a special set of faces of $\mathcal P_\Phi$. 
We study these faces and then prove that they are representatives of the $W$-orbits.
Let $\Pi=\{\alpha_1, \dots, \alpha_n\}$ be a root basis and let  $\breve\Omega=\{\breve\omega_1,\dots, \breve\omega_n\}$ be the dual basis of $\Pi$ in $\mathcal E$ i.e., the corresponding set of fundamental coweights.
 Moreover, let $\Phi^+$ be the set of positive roots with respect to $\Pi$. 
If $\theta$ is the highest root of $\Phi$ with respect to $\Pi$,  and $\theta=\sum\limits_{i=1}^n m_i\alpha_i$, then each  hyperplane $(x, \breve\omega_i)=m_i$, for $i=1, \dots, n$, supports a face $F_i$ of $\mathcal P_\Phi$ of some dimension, that contains $\theta$. 
Thus for each $I\subseteq \{1, \dots, n\}$, the intersection $F_I$ of the  faces $F_i$ with $i$ in  $I$ is a face of $\mathcal P_\Phi$.  
We also set $F_\emptyset=\mathcal P_\Phi$. 
We call the $F_I$, for all $I\subseteq \{1, \dots, n\}$, the {\it standard parabolic faces}.
Let $V_I$ be the set of roots in $F_I$: $V_I=F_I\cap \Phi$.
For all nonempty $I$, the sets $V_I$ are readily seen to be filters, or {\it dual order ideals}, in the poset $\Phi^+$  with the usual order: $\alpha\leq\beta$ if and only if $\beta-\alpha$ is a sum of positive roots. 
Each filter of $\Phi^+$ has a natural algebraic interpretation. 
Let  $\mathfrak g$ be a complex simple Lie algebra  having root system $\Phi$ with respect to a Cartan subalgebra $\mathfrak h$. 
For all $\alpha\in \Phi$, let $\mathfrak g_\alpha$ be the root space corresponding to $\alpha$,  and let $\mathfrak b$ be the standard Borel subalgebra of $\mathfrak g$ corresponding to $\Phi^+$: $\mathfrak b=\bigoplus_{\alpha\in \Phi^+}\mathfrak g_\alpha\oplus \mathfrak h$.  
For any filter V of $\Phi^+$, $\bigoplus_{\alpha\in V}\mathfrak g_\alpha$ is an ideal of $\mathfrak  b$. 
Conversely, each ideal of $\mathfrak b$ included in $\bigoplus_{\alpha\in \Phi^+}\mathfrak g_\alpha$ is obtained in such a way. 
We say that $V$ is an {\it abelian} dual order ideal  if the corresponding  $\mathfrak b$-ideal is abelian. 
Moreover, we say that $V$ is {\it principal} if the corresponding ideal is: this amounts to say that $V$ has a minimum.
A useful technique for dealing with the abelian or with the  {\it ad}-nilpotent ideals of a Borel subalgebra is to see them as special subsets of the affine root system associated to $\Phi$ (see \cite{C-P-Alg}). 
The idea, for the abelian ideals, is due to D. Peterson, and was first described in \cite{K}.
We analyze the sets $V_I$ in this way. We denote by $\widehat \Phi$ the affine root system associated to $\Phi$ and by $\widehat \Pi$ an extension of $\Pi$ to a simple system of $\widehat\Phi$: $\widehat\Pi=\Pi\cup \{\alpha_0\}$. 
For each $\Gamma \subseteq \Pi$, we denote by $\Phi(\Gamma)$ the standard parabolic subsystem of $\Phi$ generated by $\Gamma$, and by $W\la\Gamma\ra$ the standard parabolic subgroup of the Weyl group generated by the reflections with respect to the roots in $\Gamma$. 
Similarly, for $\Gamma \subseteq \widehat\Pi$, we denote by $\widehat \Phi(\Gamma)$ the standard parabolic subsystem of $\widehat\Phi$ generated by $\Gamma$. For each $I\subseteq \{1, \dots, n\}$, we set  $\Pi_I=\{\alpha_i\mid i\in I\}$.
Our first results are the following (see Lemma \ref{affine}, Proposition \ref{laminimaelunga}, and Corollary \ref{numerovertici}). 

\begin{thm*} 
Let $I \subseteq \{1, \dots, n\} $, $F_I=\{x\in \mathcal P_{\Phi}\mid
(x,\breve{\omega}_{i})= m_i, \; \forall i \in I \} $,  and  $V_I =F_I\cap \Phi$.
Then $F_{I}$ is a face  of $\mathcal P_{\Phi}$, and $V_I$ is a principal abelian dual order ideal of $\Phi^+$. Moreover, 
\begin{enumerate}
\item if $I \neq \emptyset$,  $V_I$ is in bijection with the positive roots in 
 $\widehat{\Phi}(\widehat\Pi \setminus \Pi_I) \setminus \Phi(\Pi \setminus \Pi_I)$, and through this bijection,  the vertices of $F_{I}$ correspond to the positive long roots in this set; 
\item the number of vertices of $F_I$ is 
$[W\la \Pi \setminus \Pi_I \ra : W \la (\Pi \setminus \Pi_I) \cap \theta^{\perp} \ra ]$.
\end{enumerate}
\end{thm*}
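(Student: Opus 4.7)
The strategy is to derive the basic structural properties of $F_I$ and $V_I$ directly from the root-coefficient bound $(\alpha,\breve\omega_i)=c_i\le m_i$, saturated at $\alpha=\theta$, and then to read off the bijection and the vertex count from the affine encoding $\beta\mapsto\delta-\beta$ together with the $W\langle\Pi\setminus\Pi_I\rangle$-action on $\theta$.

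First, the bound makes each hyperplane $(x,\breve\omega_i)=m_i$ support $\mathcal P_\Phi$, so $F_I=\bigcap_{i\in I}F_i$ is a face containing $\theta$. For any $\alpha\in V_I$ and any $\beta\in\Phi^+$ dominating $\alpha$, one has $(\beta,\breve\omega_i)\ge m_i$, forced to equality by the bound, so $V_I$ is a filter; abelianness follows since $\alpha,\beta\in V_I$ with $\alpha+\beta\in\Phi^+$ would give $(\alpha+\beta,\breve\omega_i)=2m_i$, contradicting the same bound.

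For the affine bijection, the map sends $\beta=\sum b_j\alpha_j$ to $\delta-\beta=\alpha_0+\sum_{j\notin I}(m_j-b_j)\alpha_j$, which is a positive root of $\widehat\Phi(\widehat\Pi\setminus\Pi_I)$ with strictly positive $\alpha_0$-coefficient, hence not in $\Phi(\Pi\setminus\Pi_I)$. Conversely, any $\gamma=c_0\alpha_0+\sum_{j\notin I}c_j\alpha_j$ in the target, written $\gamma=\alpha+c_0\delta$, has $\alpha_i$-coefficient $-c_0m_i$ for $i\in I$; the absolute bound $m_i$ on the $\alpha_i$-coefficient of any root forces $c_0=1$, so $\gamma=\delta-\beta$ for a unique $\beta\in V_I$. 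Since $I\ne\emptyset$, $\widehat\Phi(\widehat\Pi\setminus\Pi_I)$ is a finite parabolic subsystem of $\widehat\Phi$, and the minimum of $V_I$ can be read off from the maximal element of its image, which yields principality.

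For (1), the vertex identification rests on the fact that the vertices of $\mathcal P_\Phi$ are exactly the long roots, which I would either cite or establish by exhibiting each short root as a convex combination of long roots through Weyl-group symmetry. Given this, the vertices of $F_I$ are $V_I\cap\Phi_{\mathrm{long}}$, and the bijection preserves length since $(\delta,\delta)=0$. For (2), I would identify $V_I\cap\Phi_{\mathrm{long}}$ with the $W\langle\Pi\setminus\Pi_I\rangle$-orbit of $\theta$. The inclusion $\supseteq$ is immediate, as $s_j$ with $j\notin I$ fixes $\breve\omega_i$ for $i\in I$. The reverse inclusion is the technical heart and the main obstacle I foresee: given a long $\beta\in V_I$, the vector $\theta-\beta$ lies in the sublattice spanned by $\Pi\setminus\Pi_I$ (since $(\theta-\beta,\breve\omega_i)=0$ for $i\in I$), and one must produce, by induction on $\mathrm{ht}(\theta-\beta)$, a simple reflection $s_j$ with $j\notin I$ that moves $\beta$ strictly closer to $\theta$ while staying in $V_I$. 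Once the orbit description is in place, the stabilizer of $\theta$ in $W\langle\Pi\setminus\Pi_I\rangle$ equals $W\langle(\Pi\setminus\Pi_I)\cap\theta^\perp\rangle$ by the standard parabolic-stabilizer computation, and orbit-stabilizer gives the index in (2).
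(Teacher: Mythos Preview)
Your proposal is correct and follows essentially the same route as the paper: the affine bijection $\beta\mapsto\delta-\beta$ is the paper's Lemma~3.1, the filter/abelian/principal arguments are its Proposition~3.2, the reduction of vertices to long roots is its Remark~3.4, and the ``technical heart'' you single out---transitivity of $W\langle\Pi\setminus\Pi_I\rangle$ on $(V_I)_\ell$ via height induction using that $\theta$ is the unique dominant long root---is exactly the paper's Lemma~3.5, after which orbit--stabilizer gives Corollary~3.6. Your justification that $c_0=1$ via the bound $|c_i(\alpha)|\le m_i$ is slightly more explicit than the paper's, but the overall architecture is the same.
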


In particular, the face $F_I$ depends only on the irreducible component of $\alpha_0$ in the Dynkin diagram of $\widehat \Pi\setminus\Pi_I$. 
Hence it is clear that different sets of indices may yield the same face.   
For each $I\subseteq \{1, \dots, n\}$, the set of all $J\subseteq  \{1, \dots, n\}$ such that $F_J=F_I$ is an interval in the Boolean algebra of the subsets of $\{1, \dots, n\}$. 
In particular, it has a maximum  and a minimum, that we describe explicitely. 
The dimension and  the  stabilizer of  $F_I$ can be directly computed from the maximum and the minimum, respectively. Moreover, the minimum provides information  about the barycenter of $F_I$.
We sum up these results in the following statement (see Propositions \ref{intervallo}, \ref{chiusi}, \ref{stabilizzatore}, and  \ref{baryst}).

\begin{thm*}
Let $I\subseteq\{1, \dots, n\}$. There exist two subsets $\partial I$ and $\ov I$ of $\{1, \dots, n\}$ such that 
$\{J \subseteq\{1, \dots, n\} \mid F_J=F_I\} = \{J \subseteq \{1, \dots, n\} \mid \partial I \subseteq J \subseteq \ov{I} \}$. Moreover,  
\begin{enumerate}
\item the dimension of $F_I$ is $|\Pi| - |\ov I|$, 
\item the stabilizer of $F_I$ in $W$ is $W\la \Pi \setminus \Pi_{\partial I} \ra$,
\item the barycenter of $F_I$ lies in the cone generated by  $\{\breve\omega_i\mid i\in \partial I\}$, 
\item $\{\ov I \mid I \subseteq \{1, \dots, n\} \} = \{ I \subseteq \{1, \dots, n\} \mid
\widehat{\Phi}(\widehat \Pi \setminus \Pi_I) \text{ is irreducible} \}$.
\end{enumerate}
In particular, the set of the standard parabolic faces is in bijection with the irreducible standard parabolic subsystems of the affine root system that contain the affine root $\alpha_0$.
\end{thm*}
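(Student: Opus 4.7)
The plan is to derive each clause from the principle, stated just after the first theorem, that $F_I$ depends only on the connected component $C$ of $\alpha_0$ in the Dynkin diagram of $\widehat\Pi \setminus \Pi_I$. Writing $C = \{\alpha_0\}\cup C'$ with $C'\subseteq \Pi$, I would \emph{define}
\[
\Pi_{\ov I}=\Pi\setminus C',\qquad \Pi_{\partial I}=\{\alpha_i\in \Pi\setminus C' \mid \alpha_i\text{ is adjacent in }\widehat\Pi\text{ to }C\}.
\]
Both sets depend only on $C$, hence only on $F_I$. The interval characterization is then combinatorial: the component of $\alpha_0$ in $\widehat\Pi\setminus \Pi_J$ equals $C$ iff $\Pi_J\cap C'=\emptyset$ and $\Pi_J$ contains every node of $\Pi\setminus C'$ adjacent to $C$, i.e.\ iff $\partial I\subseteq J\subseteq \ov I$. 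Clause (4) is an immediate consequence: for $J=\ov I$, $\widehat\Pi\setminus \Pi_{\ov I}=C$ is connected and contains $\alpha_0$; conversely, if $\widehat\Phi(\widehat\Pi\setminus\Pi_I)$ is irreducible and contains $\alpha_0$, it coincides with the component of $\alpha_0$, forcing $I=\ov I$.

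For clause (1), I take the representative $J=\ov I$: the $|\ov I|$ equations $(x,\breve\omega_i)=m_i$ are linearly independent since the $\breve\omega_i$ are dual to $\Pi$, so $F_{\ov I}$ lies in an affine subspace of dimension $n-|\ov I|=|C'|$. Translating by $-\theta$ sends this subspace to the linear span of $C'$. To confirm full dimension, I use the $W\la C'\ra$-orbit of $\theta$: the reflections $s_j$ with $\alpha_j\in C'$ fix every $\breve\omega_i$ with $i\in\ov I$, hence preserve $F_{\ov I}$, and so the orbit lies in $V_{\ov I}$; since each connected component $C'_k$ of $C'$ must meet $\alpha_0$ inside the connected set $C$, some $s_j$ (with $\alpha_j\in C'_k$) genuinely moves $\theta$, and irreducibility of the $W\la C'_k\ra$-action on $\mathrm{span}(C'_k)$ yields the whole span.

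Clause (3) and the reverse inclusion in (2) interlock. The easy inclusion $W\la \Pi\setminus\Pi_{\partial I}\ra \subseteq \mathrm{Stab}_W(F_I)$ is direct: its connected Dynkin factors are either $C'$ (whose action preserves $V_I$ via the affine description of the first theorem, since it does not touch $\alpha_0$) or components of $\Pi\setminus\Pi_{\partial I}$ orthogonal to $C$ and hence trivial on $V_I$, while every $\breve\omega_i$ with $i\in \partial I$ is pointwise fixed. For (3), the barycenter is fixed by this stabilizer and therefore lies in the fixed subspace $\mathrm{span}\{\breve\omega_i\mid i\in \partial I\}$; its coefficients are nonnegative because every $\beta\in V_I$ satisfies $(\beta,\breve\omega_i)=m_i\geq 0$ for $i\in \partial I\subseteq \ov I$, and they are \emph{strictly} positive precisely on $\partial I$. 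Clause (2) then follows immediately: since the barycenter lies in the relative interior of the face of the fundamental chamber spanned by $\{\breve\omega_i\mid i\in\partial I\}$, its $W$-stabilizer is exactly the standard parabolic $W\la \Pi\setminus\Pi_{\partial I}\ra$.

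The main obstacle is the strict positivity of the barycenter coefficients on $\partial I$: for each $i\in\partial I$ I must exhibit some $\beta\in V_I$ with $(\beta,\alpha_i)>0$, so that $s_i(\beta)$ fails the defining equation $(x,\breve\omega_i)=m_i$ of $F_I$. When $\alpha_i$ is adjacent to $\alpha_0$ in $\widehat\Pi$ one may take $\beta=\theta$ and the check is immediate. When $\alpha_i$ is adjacent only to interior nodes of $C'$, one has to walk along $C'$ via reflections in $W\la C'\ra$ starting from $\theta$ to reach a suitable $\beta\in V_I$, and it is here that the connectedness of $C$ and the affine parabolic structure of $\widehat\Phi(C)$ must be used with care.
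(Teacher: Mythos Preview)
Your overall architecture is sound and the interval/closure parts, together with clause (4), match the paper. The genuinely different idea is your interlocking of (2) and (3): the paper proves them independently, using the $\alpha_j$-string decomposition of $V_I$ (a one-line lemma: each root string is centered on the hyperplane $\alpha_j^\perp$) for the barycenter, and a direct minimal-length argument with the minimal root $\eta_I=\min V_I$ for the stabilizer. Your route---barycenter fixed by the easy stabilizer inclusion, then stabilizer from the exact position of the barycenter in the chamber---is elegant and works in principle, but as written it has a gap.

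The gap is in your nonnegativity step. Writing the barycenter as $b=\sum_{i\in\partial I}c_i\breve\omega_i$, the coefficient is $c_i=(b,\alpha_i)$, not $(b,\breve\omega_i)$; your sentence ``$(\beta,\breve\omega_i)=m_i\ge0$'' establishes the latter, which is irrelevant. The correct reason is that $(\beta,\alpha_i)\ge0$ for every $\beta\in V_I$ and $i\in\partial I$, because $\beta+\alpha_i$ cannot be a root (its $i$-th coordinate would exceed $m_i$). This is exactly the paper's argument for (3).

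For the strict positivity you flag as the ``main obstacle,'' you are working harder than necessary. No walk along $C'$ is needed: take $\beta=\eta_I$. Since $-\eta_I+\delta$ is the highest root of the irreducible system $\widehat\Phi(C)$, all its coefficients on $C$ are strictly positive; for $\alpha_i\in\Pi_{\partial I}$ (adjacent to $C$ but not in $C$) every nonzero term $(\alpha,\alpha_i)$ with $\alpha\in C$ is negative, so $(-\eta_I+\delta,\alpha_i)<0$, i.e.\ $(\eta_I,\alpha_i)>0$. This single root simultaneously fixes your nonnegativity slip and completes your stabilizer argument. The paper uses the very same inequality $(\eta_I,\alpha_i)>0$, but plugs it into a direct contradiction: if $w\in\mathrm{Stab}_W(F_I)\setminus W\langle\Pi\setminus\Pi_{\partial I}\rangle$ has minimal length, then some $\alpha_i$ with $i\in\partial I$ satisfies $w(\alpha_i)<0$, and one checks that $w(\eta_I-\alpha_i)$ lands in $F_I$ while $\eta_I-\alpha_i$ does not.
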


One can easily check that the sets $\partial I$ have at most three roots. 
Thus each standard parabolic face $F_I$, of any dimension, can always be obtained as an intersection of one, two, or three faces of type $F_i$, $i\in \{1, \dots, n\}$. 
We call these the {\it coordinate faces}. 
By item (3) of the above theorem, the barycenter of any  $F_I$ is a positive linear combination of  at most three fundamental coweights. 
\par
Associating to each $F_I$ its minimal root, we obtain an injective  map from  the set of standard parabolic faces into the set of positive roots. We characterize the image of this map (Proposition \ref{carat-min}). 
\par
The analysis of the standard parabolic and coordinate faces is made in Sections 3 and 4.    
\par
In Section 5 we deal with general faces and prove that each face belongs to the $W$-orbit of a standard parabolic face. 
Thus, each face of $\mathcal P_\Phi$ corresponds to an abelian ideal of a Borel subalgebra of $\mathfrak g$. 
\par
Part of the  results of this section hold in the context of weight  polytopes and follows by Vinberg's results of \cite{V} (see also \cite{K-R} for a generalization). 
A weight polytope $P(\lambda)$ is the convex hull of the $W$-orbit of the weight $\lambda$ of the Lie algebra $\mathfrak g$.  
Since it is easy to prove that $\mathcal P_\Phi$ is the convex hull of the long roots, we have that $\mathcal P_{\Phi}=P(\theta)$, hence Vinberg's results specialize to the root polytope. 
More precisely, the fact that the orbits of the faces are in bijection with the irreducible subsystems of the affine root system that contain the affine root can be directly deduced from Vinberg's results. 
Since we obtain this fact as an easy consequence of the results of the previous sections and of a further result that does not hold in the general context of weight polytopes (Theorem \ref{nonspezzato}), we take our proofs independent from Vinberg's results. 
We state the main results of Section 5 in the next theorem and corollary. The theorem sums up  Proposition \ref{nonconiugate}, Theorem \ref{tutteparaboliche2}, and Corollary \ref{fpoly}; the corollary  corresponds to Corollary \ref{mezzi-spazi}.
\begin{thm*}
The faces $F_{I}$ form a complete set of representatives of the $W$-orbits.
Moreover, the $f$-polynomial of $\mathcal P_\Phi$ is
$$\sum\limits_{I \in \mathcal I }\,  [W:W\la \Pi \setminus \Pi_{\partial I} \ra] \,
t^{n - |I|}, $$ 
where $ \mathcal I = \{ I \subseteq \{1, \dots, n\} \mid \widehat{\Phi}(\widehat \Pi \setminus \Pi_I) \text{ is irreducible} \}$.
\end{thm*}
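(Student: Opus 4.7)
The plan is to establish the orbit claim by $W$-translating an arbitrary face of $\mc P_\Phi$ to one whose outer normal lies in the closed dominant Weyl chamber, and then to extract the $f$-polynomial by summing orbit sizes over the canonical family $\mathcal I$. The non-conjugacy of distinct standard parabolic faces is already Proposition~\ref{nonconiugate}, so the heart of the matter is exhaustiveness: every face of $\mc P_\Phi$ is $W$-conjugate to some $F_I$.

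To prove exhaustiveness, I would fix an arbitrary face $F$ together with an outer normal $v \in \mc E$ realizing $F = \{x \in \mc P_\Phi \mid (x,v) = \max_{y \in \mc P_\Phi}(y,v)\}$. Since $\mc P_\Phi$ is $W$-invariant, replacing $v$ by $wv$ for a suitable $w \in W$ replaces $F$ by $w(F)$, so I may assume $v \in \bar C := \sum_i \real_{\ge 0}\breve\omega_i$. Writing $v = \sum_{i=1}^n \lambda_i \breve\omega_i$ with $\lambda_i \ge 0$ and setting $I = \{i \mid \lambda_i > 0\}$, the coordinate bound $(x,\breve\omega_i) \le m_i$ on $\mc P_\Phi$ — a direct consequence of $\mc P_\Phi = \text{conv}(\Phi)$ together with $\theta$ being the highest root, and already present in Section~3 — yields $(x,v) \le \sum_i \lambda_i m_i$, with equality precisely on the locus where $(x,\breve\omega_i) = m_i$ for every $i \in I$. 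Hence $w(F) = F_I$, proving the first assertion.

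For the $f$-polynomial, item~(4) of the second stated theorem identifies $\mathcal I$ with the set of $I$ satisfying $\ov I = I$; together with the first assertion and Proposition~\ref{nonconiugate}, this shows that $\{F_I \mid I \in \mathcal I\}$ is an irredundant system of $W$-orbit representatives. For each $I \in \mathcal I$, item~(1) gives $\dim F_I = n - |I|$, while the orbit-stabilizer formula combined with item~(2) gives $|W \cdot F_I| = [W : W\la \Pi \setminus \Pi_{\partial I}\ra]$. Summing these orbit sizes weighted by $t^{n - |I|}$ over $I \in \mathcal I$ produces the claimed formula. The main obstacle is the exhaustiveness step above: one must verify that, once $v$ has been moved into $\bar C$, the resulting face really coincides with $F_I$ and is not cut out by further hyperplanes. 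This is precisely where the coordinate bound $(\cdot,\breve\omega_i) \le m_i$ is essential — it identifies $F_I$ as the exposed face with outer normal in the relative interior of $\sum_{i \in I}\real_{> 0}\breve\omega_i$ — and where the structural results of Sections~3--4 intervene, ensuring that different $I$'s with the same $\ov I$ yield the same face and that $\mathcal I$ parametrizes the orbits without double-counting.
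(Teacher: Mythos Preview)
Your argument is correct, but the route you take for exhaustiveness is genuinely different from the paper's. You argue via the normal fan: pick an outer normal $v$ to an arbitrary face, $W$-translate it into the closed fundamental chamber, expand $v=\sum_i\lambda_i\breve\omega_i$ with $\lambda_i\ge 0$, and use the coordinate bounds $(x,\breve\omega_i)\le m_i$ on $\mc P_\Phi$ to identify the exposed face as $F_I$ with $I=\{i\mid\lambda_i>0\}$. This is essentially the Vinberg argument for weight polytopes that the paper alludes to before Theorem~\ref{nonspezzato} and deliberately bypasses. The paper instead proves Theorem~\ref{nonspezzato} (that $V_F$ cannot split into two nonempty orthogonal pieces), deduces from it that $E_F$ is spanned by roots (Corollary~\ref{mm}), and then runs an induction on codimension (Theorem~\ref{tutteparaboliche2}): at each step one finds $w$ in a suitable parabolic subgroup carrying $E_F$ to an intersection $\bigcap_{j\in J}\breve\omega_j^\perp$, and then argues that the affine level is forced to be $m_j$ (possibly after applying the longest element of a parabolic). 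Your approach is shorter and more elementary for this particular statement; the paper's approach is longer but produces Theorem~\ref{nonspezzato} as an intermediate result of independent interest, which is later exploited in Section~6 (e.g., Lemma~\ref{corteinterne} and Corollary~\ref{mm3}), and keeps the treatment independent of Vinberg's results as the authors intended. Your derivation of the $f$-polynomial from the orbit claim, Proposition~\ref{nonconiugate}, and items (1), (2), (4) of the second theorem matches the paper's Corollary~\ref{fpoly}.
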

In particular, the orbits of the facets correspond to the simple roots of $\Phi$  that do not disconnect  the extended Dynkin graph. 
Thus  we obtain the following  explicit representation of $\mathcal P_{\Phi}$ as an intersection of a minimal set of half-spaces. 

\begin{cor*} 
Let 
$\Pi_\mathcal I=\left\{\alpha\in \Pi\mid \widehat\Phi\(\widehat \Pi\setminus\{\alpha\} \) \text{ is irreducible}\right\}$ 
and let $\mathcal L(W^\alpha)$ be a set of representatives of the left cosets of $W$ modulo the subgroup  $W\la\Pi\setminus\{\alpha\}\ra$.   
Then 
$$\mathcal P_\Phi=\big\{ x\mid (x, w\breve\omega_\alpha)\leq m_\alpha \ 
\text{for all }\alpha\in \Pi_\mathcal I \text{ and } w\in \mathcal
L(W^\alpha )\big\}.$$
Moreover, the above one is the minimal set of linear inequalities that defines $\mathcal P_\Phi$ as an intersection of half-spaces. 
\end{cor*}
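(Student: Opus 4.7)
The plan is to invoke the preceding theorem to classify the facets, translate the resulting orbits into half-spaces via the Weyl group action, and finish with the standard fact that a full-dimensional polytope equals the intersection of the closed half-spaces supported by its facets.

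First I would identify the facets. By item~(1) of the previous theorem one has $\dim F_I = n - |\ov I|$, so $F_I$ is a facet iff $\ov I = \{i\}$ for a single $i$, and by item~(4) this happens precisely when $\alpha_i \in \Pi_{\mathcal I}$. Since the $F_I$ form a complete set of representatives of the $W$-orbits of faces, the orbits of facets are indexed by $\Pi_{\mathcal I}$. For such an $i$, the inclusion $\partial\{i\} \subseteq \ov{\{i\}} = \{i\}$ forces $\partial\{i\} = \{i\}$, so item~(2) gives $\mathrm{Stab}_W(F_i) = W\la\Pi\setminus\{\alpha_i\}\ra$, and the facets in the orbit of $F_i$ are parametrized bijectively by $\mathcal L(W^{\alpha_i})$ via $w \mapsto wF_i$.

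Next I would translate each facet into a supporting half-space. By definition $F_i \subseteq \{x \mid (x,\breve\omega_i) = m_i\}$, so by $W$-invariance of the inner product $wF_i$ lies in $\{x \mid (x, w\breve\omega_i) = m_i\}$. The corresponding half-space $(x, w\breve\omega_i) \leq m_i$ contains all of $\mathcal P_\Phi$: every root $\beta$ satisfies $(w^{-1}\beta, \breve\omega_i) \leq m_i$ because $\theta$ maximises $(\,\cdot\,, \breve\omega_i)$ on $\Phi$ with value $m_i$; rewriting this as $(\beta, w\breve\omega_i)\leq m_i$ and passing to convex hulls gives the desired inequality on all of $\mathcal P_\Phi$.

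With all facet-defining half-spaces in hand, the claimed equality is immediate from the standard polytope fact. Minimality is automatic, since distinct facets have distinct supporting hyperplanes and therefore removing any one inequality strictly enlarges the intersection in a neighborhood of the corresponding facet. I expect no serious obstacle beyond the bookkeeping in the first step; the only subtle point is the identification $\partial\{i\} = \{i\}$ when $\alpha_i \in \Pi_{\mathcal I}$, which is forced by $F_{\partial\{i\}} = F_{\ov{\{i\}}}$ together with the characterisation of the interval $\{J\mid F_J=F_i\}$.
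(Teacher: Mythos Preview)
Your proof is correct and follows the same route as the paper: identify the facets as the $W$-translates of the $F_i$ with $\alpha_i\in\Pi_{\mathcal I}$ via the classification theorem, then invoke the standard fact that a full-dimensional polytope is the intersection of its facet-supporting half-spaces. The only cosmetic difference is that the paper reaches the coset parametrization by observing directly that $W\la\Pi\setminus\{\alpha_i\}\ra$ is the stabilizer of the coweight $\breve\omega_i$, rather than of the face $F_i$ via the identification $\partial\{i\}=\{i\}$.
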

 
Finally, in Section 6, we find the minimal faces that contain the short roots, and prove that they form a single $W$-orbit. 
Moreover, we study the 1-skeleton of $\mathcal P_\Phi$ and find the special property that either all  edges are long roots, or all  edges are the double of short roots.

\section{Preliminaries}

In this section, we fix the notation and recall the basic results that we most 
frequently use in the paper.
For basic facts about root systems, Weyl groups, Lie algebras, and convex
polytopes, 
we refer the reader, respectively, to \cite{Bou}, \cite{BB} and
\cite{Hum}, \cite{H}, and~\cite{G2}. 
\par
 Given $n,m\in {\mathbb Z}$, with $n\le m$, we let
$[n,m]=\{n,n+1,\dots,m\}$ and, for $n\in {\mathbb N}\setminus \{0\}$, we let
$[n]=[1,n]$. For every set $I$, we denote its cardinality by $|I|$. 
We write $:=$ when the term at its left is defined by the expression at its
right. We denote by $\gen_\real X$ the real vector space generated by $X$.
\par
Let $\Phi$ be a finite irreducible (reduced) crystallographic root system in the
real 
vector space $\gen_\real \Phi$ endowed with the positive definite
bilinear form $(\,\cdot\,,\cdot\,)$.
We fix our further notation on the root system and its Weyl group in the following list:
\smallskip 

{\renewcommand{\arraystretch}{1.2}
$
\begin{array}{@{\hskip-1.3pt}l@{\qquad}l}
n &  \textrm{the rank of $\Phi$}, 
\\
\Phi_{\ell} & \textrm{the set of long roots ($\Phi_{\ell}=\Phi$ in simply laced 
cases)}, 
\\
\Gamma_{\ell}  & = \Gamma \cap \Phi_{\ell}, \textrm{ for all $\Gamma \subseteq \Phi$},
\\
\Phi_{s} & \textrm{the set of short roots}, 
\\
\Gamma_{s}  & = \Gamma \cap \Phi_{s}, \textrm{ for all $\Gamma \subseteq
\Phi$},
\\
\textrm{$\alpha$-$(\beta)$} & =(\beta+\ganz \alpha)\cap \Phi,  \textrm{ the
$\alpha$-string through $\beta$},
\\
\Pi= \{\alpha_1, \ldots, \alpha_n\} &  \textrm{the set of simple roots}, 
\\
\breve\Omega=\{\breve{\omega}_1, \dots, \breve{\omega}_{n}\}  &  \textrm{the set of fundamental coweights (the dual basis of $\Pi$)},
\\
\Phi^+  &  \textrm{the set of positive roots w.r.t. $\Pi$},
%\end{array}$
%{\renewcommand{\arraystretch}{1.2}
%$
%\begin{array}{@{\hskip-1.3pt}l@{\qquad}l}
\\
\widehat{\Phi}  &  \textrm{the affine root system associated with $\Phi$}, 
\\
\alpha_0  &  \textrm{the affine simple root of $\widehat{\Phi}$},
\\ 
\widehat{\Pi} & = \{\alpha_0\}\cup\Pi, 
\\
 \widehat{\Phi}^+  &  \textrm{the set of positive roots of $\widehat{\Phi}$
w.r.t.  $\widehat{\Pi}$}, 
\\
\Phi(\Gamma),\ \widehat\Phi(\Gamma) &  \textrm{the root subsystem generated by $\Gamma$ in $\Phi$, in $\widehat\Phi$, resp.}
\\
&\text{(for $\Gamma\subseteq \Phi$,  $\Gamma\subseteq \widehat\Phi$, resp.)},
\\
\Phi^+(\Gamma),\ \widehat\Phi^+(\Gamma)  &  = \Phi(\Gamma) \cap \Phi^+, \ \widehat\Phi(\Gamma) \cap \widehat\Phi^+, \text{resp.}
\\
c_i(\alpha)  &  \textrm{the $i$-th coordinate of $\alpha$ w.r.t. $\widehat \Pi$: $\alpha=\sum_{i=0}^nc_i(\alpha) \alpha_i$},
\\
\textrm{supp}(\alpha) & =\{\alpha_i \in \widehat \Pi \mid c_i (\alpha) \neq 0  \}, \text{ the support of $\alpha$},
\\
ht(\alpha)  &= \sum_{i=0}^{n}c_i(\alpha),  \textrm{ the height of the root
$\alpha$}, 
\\
\theta  & \textrm{the highest root in $\Phi$},
\\
\theta_s  & \textrm{the highest short root in $\Phi$},
\\
m_i &   =c_i(\theta) 
\\
W   &  \textrm{the Weyl group of $\Phi$},
\\
s_{\alpha}   & \textrm{the reflection with respect to $\alpha$},
\\
\ell  &  \textrm{the length function of $W$ w.r.t. $\Pi$},
\\
D_r(w) & =\{i\in [n] \mid \ell(ws_{\alpha_i})<\ell(w)\},  \textrm{ the right descent
set of $w$},
\\
w_0 & \textrm{the longest element of $W$ w.r.t. $\Pi$},
\\
W\la \Gamma\ra & \textrm{the subgroup of $W$ generated by 
$\{s_\alpha\mid \alpha\in \Gamma\}$ ($\Gamma\subseteq \Phi$)},
\\
\widehat{W} &  \textrm{the Weyl group of $\widehat{\Phi}$}.
\end{array}$
\bigskip

By \emph{the root poset of $\Phi$} (w.r.t. the basis $\Pi$) we intend  the
partially ordered set whose underlying set is $\Phi^+$, with the standard
order,} $\alpha \leq \beta$ if and only if $\beta - \alpha$ is a nonnegative
linear combination of roots in $\Phi^+$. 
The root poset could be equivalently defined as the transitive closure of the relation  $\alpha \lhd \beta$ if and only if 
$\beta - \alpha$ is a simple root.
The root poset hence is ranked by the height function and has the highest root
$\theta$ as maximum.
A dual order ideal is, as usual, a subset I of $\Phi^+$ such that, if $\alpha \in I$ and $\beta \geq \alpha$, 
then $\beta \in I$.
\par

For the reader convenience, we collect in the following propositions the
standard results on root systems that are 
frequently used in the paper, often without explicit mention.
\bigskip

\begin{pro}
\label{standard}
 Let $\Phi$ be any root system. If $\alpha, \beta \in \Phi$, $\alpha \neq -
\beta$, $(\alpha,\beta)<0$, then $\alpha + \beta \in \Phi$. Moreover,
\begin{enumerate}
 \item if $L$ is a subset of $\Pi$ which is connected in the Dynkin
diagram of
$\Phi$, then $\sum_{\alpha \in L} \alpha $ is a positive root,
\item the support of a root is connected in the Dynkin
diagram of
$\Phi$.
\end{enumerate}
\end{pro}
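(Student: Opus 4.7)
This proposition bundles three classical root-system facts, and the natural plan is to prove them in the order stated, with each used in the proof of the next.

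For the opening assertion, I would invoke the structure of the $\alpha$-string through $\beta$. Since $\alpha\neq\pm\beta$ (the case $\alpha=\beta$ is ruled out because $(\alpha,\alpha)>0$, and $\alpha=-\beta$ is excluded by hypothesis), the $\alpha$-string is an unbroken chain $\beta-p\alpha,\dots,\beta+q\alpha$ with integers $p,q\ge 0$ satisfying $p-q=2(\alpha,\beta)/(\alpha,\alpha)$. The assumption $(\alpha,\beta)<0$ forces $q>p\ge 0$, hence $q\ge 1$ and $\beta+\alpha\in\Phi$.

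For item (1), I would argue by induction on $|L|$. The case $|L|=1$ is immediate. For the inductive step, the connectedness of the Dynkin subdiagram on $L$ guarantees a leaf $\alpha\in L$ whose removal leaves $L\setminus\{\alpha\}$ connected. By induction, $\beta:=\sum_{\alpha'\in L\setminus\{\alpha\}}\alpha'$ is a positive root. Since $\alpha$ has at least one neighbour $\alpha''\in L\setminus\{\alpha\}$, we have $(\alpha,\alpha'')<0$, while $(\alpha,\alpha')\le 0$ for the remaining $\alpha'\in L\setminus\{\alpha\}$ (distinct simple roots are pairwise non-acute), so $(\alpha,\beta)<0$. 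Moreover $\alpha\ne -\beta$ because $\alpha\notin\text{supp}(\beta)$. The opening assertion then yields $\alpha+\beta\in\Phi$, and it is positive since all its coordinates in $\Pi$ are non-negative.

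For item (2), I would argue by contradiction. Suppose $\gamma\in\Phi$ has disconnected support $L=L_1\sqcup L_2$, with $L_1,L_2$ non-empty and $(\alpha',\alpha'')=0$ for all $\alpha'\in L_1$, $\alpha''\in L_2$. Since $L\subseteq\Pi$, the subsystem $\Phi(L)$ equals $\Phi\cap\gen_\real L$ and therefore contains $\gamma$. I would then argue that $\Phi(L)=\Phi(L_1)\sqcup\Phi(L_2)$: the parabolic Weyl group $W\la L\ra$ factors as $W\la L_1\ra\times W\la L_2\ra$ and acts componentwise on the orthogonal sum $\gen_\real L_1\oplus\gen_\real L_2$, so the $W\la L\ra$-orbit of any simple root in $L_i$ is confined to $\gen_\real L_i$, and every root of $\Phi(L)$ arises as such an orbit. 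Hence $\gamma\in\Phi(L_1)\cup\Phi(L_2)$, contradicting the fact that $\text{supp}(\gamma)$ meets both parts. The only place requiring any real care is this last observation, that roots of a parabolic system with orthogonally decomposable simple system do not straddle the decomposition; the rest is bookkeeping with $\alpha$-strings and Dynkin-diagram combinatorics.
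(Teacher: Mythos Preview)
Your proof is correct. The paper itself does not supply a proof of this proposition: it is placed in the preliminaries as a collection of standard root-system facts (with Bourbaki as the implicit reference), so there is no argument in the paper to compare yours against. Your three arguments---the $\alpha$-string computation, induction on $|L|$ via a leaf of the subtree, and the orthogonal splitting $\Phi(L)=\Phi(L_1)\sqcup\Phi(L_2)$ of a reducible parabolic subsystem---are the standard ones and are sound as written.
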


\smallskip

\begin{pro}[\cite{Bou}, Ch. VI, \S 1]
\label{standard2}
  Let $\Phi$ be any root system, and let $\alpha$ and $\beta$ be
non-proportional roots of $\Phi$.
Then the set $\{j\in \mathbb Z \mid \beta + j \alpha \in \Phi \}$ is an interval
$[-q,p]$ containing 0.
The $\alpha$-string through $\beta$, 
$\alpha$-$(\beta)$, has exactly 
$- \frac{2(\gamma, \alpha)}{(\alpha, \alpha)} + 1 $ roots, where $\gamma= \beta
- q \alpha$ is the origin of the string.
\end{pro}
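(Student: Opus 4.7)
The plan is to prove Proposition \ref{standard2} in two steps: first, establish that the set of integers $j$ with $\beta + j\alpha \in \Phi$ forms an interval, using only the sum-closure statement recorded in the opening sentence of Proposition \ref{standard}; then use the symmetry induced by the reflection $s_\alpha$ to derive the counting formula.

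For the interval property I argue by contradiction. Since $0$ lies in the set (as $\beta \in \Phi$), failure of the interval property means one may pick integers $r < s$ with $\beta + r\alpha, \beta + s\alpha \in \Phi$, with $\beta + (r+1)\alpha \notin \Phi$, $\beta + (s-1)\alpha \notin \Phi$, and $s - r \geq 2$ (choose $r$ as the largest index of a root immediately followed by a non-root in the upward direction from $0$, and $s$ as the next larger index of a root). Non-proportionality of $\alpha, \beta$ ensures $\beta + r\alpha \neq -\alpha$ and $\beta + s\alpha \neq \alpha$, so the contrapositive of Proposition \ref{standard} applied to the pairs $(\beta + r\alpha, \alpha)$ and $(\beta + s\alpha, -\alpha)$ forces $(\beta + r\alpha, \alpha) \geq 0$ and $(\beta + s\alpha, \alpha) \leq 0$. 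Subtracting these two inequalities yields $(s - r)(\alpha, \alpha) \leq 0$, contradicting $s > r$. The mirror-image argument handles negative indices, so the set is indeed an interval $[-q, p]$ containing $0$.

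For the length formula I exploit $s_\alpha$. The identity $s_\alpha(\beta + j\alpha) = \beta + \bigl(-j - \tfrac{2(\beta,\alpha)}{(\alpha,\alpha)}\bigr)\alpha$ shows that $s_\alpha$ restricts to the orientation-reversing involution $j \mapsto -j - \tfrac{2(\beta,\alpha)}{(\alpha,\alpha)}$ on the interval $[-q, p]$; its endpoints must therefore be interchanged, giving $p - q = -\tfrac{2(\beta,\alpha)}{(\alpha,\alpha)}$. Setting $\gamma = \beta - q\alpha$ and using $(\gamma,\alpha) = (\beta,\alpha) - q(\alpha,\alpha)$ converts this relation into $p + q = -\tfrac{2(\gamma,\alpha)}{(\alpha,\alpha)}$, so the string has exactly $p + q + 1 = -\tfrac{2(\gamma,\alpha)}{(\alpha,\alpha)} + 1$ roots, as required.

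The main (minor) obstacle is the bookkeeping needed to rule out the degenerate case $\alpha + \beta = 0$ when invoking Proposition \ref{standard}; this is exactly where the non-proportionality hypothesis is used, and once it is checked the rest of the argument is computational. No deeper input is needed, which is why the paper is content to cite \cite{Bou}.
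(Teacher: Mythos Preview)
Your proof is correct and is essentially the standard argument found in Bourbaki (and in most textbooks on root systems): first the gap-free property via the sign analysis of $(\beta+r\alpha,\alpha)$ and $(\beta+s\alpha,\alpha)$ at the boundary of a hypothetical gap, then the relation $p-q=-\tfrac{2(\beta,\alpha)}{(\alpha,\alpha)}$ from the fact that $s_\alpha$ reverses the string. Note that the paper does not supply its own proof of this proposition---it is simply quoted from \cite{Bou}---so there is no in-paper argument to compare against; your write-up matches the cited source. (One small remark: the sentence ``The mirror-image argument handles negative indices'' is superfluous, since your contradiction argument already covers any gap regardless of the signs of $r$ and $s$.)
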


\smallskip

\begin{pro}[\cite{Bou}, Ch. VI, \S 1, Proposition 2.4]
\label{standard3}
 Let $\Phi$ be any root system and let $\Phi '$ be the intersection of $\Phi$
with a subspace of $\gen_\real\Phi$. Then 
\begin{enumerate}
 \item $\Phi '$ is a root system in the subspace it spans;
\item given any basis $\Pi ' $ of $\Phi '$, there exists a basis of $\Phi$
containing $\Pi'$.
\end{enumerate}
\end{pro}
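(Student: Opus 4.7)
My plan is to verify the root-system axioms directly for part (1), and for part (2) to construct a regular vector in $\gen_\real\Phi$ whose induced simple system contains $\Pi'$.

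Part (1) is routine. Set $V=\gen_\real\Phi'$, so $\Phi'=\Phi\cap V$ is finite, spans $V$, and avoids $0$. For each $\alpha\in\Phi'$, one has $\real\alpha\cap\Phi'\subseteq\real\alpha\cap\Phi=\{\pm\alpha\}$. The reflection $s_\alpha$ preserves $V$ (because $\alpha\in V$, so $s_\alpha(v)=v-\frac{2(v,\alpha)}{(\alpha,\alpha)}\alpha$ lies in $V$ whenever $v\in V$) and preserves $\Phi$, so it preserves $\Phi\cap V=\Phi'$. The integrality $\frac{2(\beta,\alpha)}{(\alpha,\alpha)}\in\ganz$ is inherited from $\Phi$.

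For (2), fix a basis $\Pi'$ of $\Phi'$ and let $(\Phi')^{+}$ denote the corresponding positive system of $\Phi'$. I would first pick $t'\in V$ in the open Weyl chamber of $\Phi'$ determined by $\Pi'$, so that $(t',\alpha)>0$ for every $\alpha\in(\Phi')^{+}$. Next, choose $u$ in the orthogonal complement of $V$ inside $\gen_\real\Phi$ satisfying $(u,\alpha)\neq 0$ for every $\alpha\in\Phi\setminus V$; such $u$ exists because the forbidden locus is a finite union of hyperplanes. Finally, set $t=t'+\lambda u$ with $\lambda>0$ large enough that for every $\alpha\in\Phi\setminus V$ the sign of $(t,\alpha)=(t',\alpha)+\lambda(u,\alpha)$ agrees with the sign of $(u,\alpha)$. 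The vector $t$ is then regular for $\Phi$ and determines a positive system $\Phi^{+}_{t}$ and a basis $\Pi$ of $\Phi$.

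The main obstacle is showing $\Pi'\subseteq\Pi$, i.e., that each $\alpha\in\Pi'$ is indecomposable in $\Phi^{+}_{t}$. I would argue by contradiction: suppose $\alpha=\beta+\gamma$ with $\beta,\gamma\in\Phi^{+}_{t}$. If both $\beta$ and $\gamma$ lie in $V$, then since $u$ is orthogonal to $V$ one has $(t,\beta)=(t',\beta)$ and $(t,\gamma)=(t',\gamma)$, so $\beta,\gamma\in(\Phi')^{+}$, contradicting the indecomposability of $\alpha$ in $\Pi'$. Otherwise $\beta\notin V$; since $\alpha\in V$, this forces $\gamma\notin V$ with orthogonal components $\gamma_{V^{\perp}}=-\beta_{V^{\perp}}$, so $(u,\beta)$ and $(u,\gamma)$ have opposite signs. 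By the choice of $\lambda$ the values $(t,\beta)$ and $(t,\gamma)$ then have opposite signs, contradicting $\beta,\gamma\in\Phi^{+}_{t}$. The delicate point is the scale separation between the $V$ and $V^{\perp}$ directions when constructing $t$; once that is in place, the decomposition analysis is immediate.
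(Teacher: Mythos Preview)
The paper does not supply its own proof of this proposition; it is quoted from Bourbaki (Ch.~VI, \S1) as a standard preliminary fact, so there is nothing to compare against. Your argument is correct and is essentially the classical one: part~(1) is a routine verification of the axioms, and for part~(2) you construct a regular element $t=t'+\lambda u$ for $\Phi$ whose $V$-component $t'$ is already regular for $\Phi'$ and selects $(\Phi')^{+}$, while the $V^{\perp}$-component $\lambda u$ dominates on $\Phi\setminus V$. The contradiction analysis for indecomposability is clean; the key observation that $\beta\notin V$ forces $\gamma\notin V$ with $(u,\beta)=-(u,\gamma)$ is exactly what makes the argument work. The only cosmetic omission is the degenerate case $V=\gen_\real\Phi$, where $V^{\perp}=\{0\}$ and one simply takes $t=t'$; this is trivial.
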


\bigskip

The following result might be less known than the previous ones and we give a proof of it.

\begin{pro}
\label{standard4}
 Let $\Phi$ be an irreducible root system such that the $i$-th coordinate 
$m_i$ of $\theta$ w.r.t. $\Pi$ is $1$. Then $\{-\theta\} \cup \Pi
\setminus \{\alpha_i\}$ is a basis of $\Phi$. Moreover, if $w_i$ is the longest
element in $W\la\Pi \setminus \{\alpha_i\}\ra$, then $w_i$ is an involution,
and 
$$w_i(\Pi
\setminus \{\alpha_i\})=-(\Pi \setminus \{\alpha_i\}), \qquad
w_i(\alpha_i)=\theta.$$
\end{pro}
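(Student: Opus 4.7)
I have four claims to verify: (i) $\Pi' := \{-\theta\} \cup (\Pi \setminus \{\alpha_i\})$ is a basis of $\Phi$; (ii) $w_i^2 = 1$; (iii) $w_i(\Pi \setminus \{\alpha_i\}) = -(\Pi \setminus \{\alpha_i\})$ as sets; (iv) $w_i(\alpha_i) = \theta$. For (i), linear independence is immediate since $-\theta$ has $\alpha_i$-coefficient $-m_i = -1 \neq 0$. For the sign-consistency, I would substitute $\alpha_i = -(-\theta) - \sum_{j \neq i} m_j \alpha_j$ into any positive root $\beta = \sum b_j \alpha_j$. The bound $0 \leq b_i \leq m_i = 1$ forces $b_i \in \{0,1\}$: in the first case the expression in $\Pi'$ has coefficient $0$ on $-\theta$ and nonnegative coefficients on $\Pi \setminus \{\alpha_i\}$; in the second, coefficient $-1$ on $-\theta$ and coefficients $b_j - m_j \leq 0$ on $\Pi \setminus \{\alpha_i\}$, both consistent. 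Negative roots follow by negation.

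Parts (ii) and (iii) are standard consequences of $w_i$ being the longest element of the parabolic subgroup $W_I := W\la \Pi \setminus \{\alpha_i\}\ra$. The longest element of any finite Weyl group equals its own inverse, giving (ii). For (iii), as the longest element of the Weyl group of the sub-root-system $\Phi(\Pi \setminus \{\alpha_i\})$ with positive system $\Phi^+(\Pi \setminus \{\alpha_i\})$, $w_i$ sends this positive system onto its negative; in particular $w_i(\Pi \setminus \{\alpha_i\})$ is a set of negative simple roots of the subsystem, equal by cardinality to $-(\Pi \setminus \{\alpha_i\})$.

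The main step is (iv). Setting $I := \Pi \setminus \{\alpha_i\}$, I would show that $w_i \alpha_i$ is a positive, long, $I$-dominant root with $c_i = 1$, and then prove that $\theta$ is the unique such root. The first three properties come from the $W_I$-action: $w_i \alpha_i - \alpha_i \in \mathrm{span}(I)$ (since $w_i$ is a product of reflections through roots in $I$), so $c_i(w_i \alpha_i) = 1$ and hence $w_i \alpha_i \in \Phi^+$; $w_i$ preserves length and $\alpha_i$ is long (a case-by-case observation for each irreducible type admitting some $m_j = 1$); and decomposing $\alpha_i = \alpha_i^I + \alpha_i^{I\perp}$ with $\alpha_i^I$ the projection onto $\mathrm{span}(I)$, the component $\alpha_i^I$ is $I$-antidominant (simple-root pairings are nonpositive), and $\alpha_i^{I\perp}$ is fixed by $w_i$, so $w_i \alpha_i^I$ is the $I$-dominant representative of its $W_I$-orbit, giving $(w_i \alpha_i, \alpha_j) \geq 0$ for $\alpha_j \in I$. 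For the uniqueness: if $\mu \in Y := \{\beta \in \Phi^+ : c_i(\beta) = 1\}$ is long and $I$-dominant with $\mu \neq \theta$, then $\mu < \theta$ in the root poset, so some simple $\alpha_k$ with $\mu + \alpha_k \in \Phi^+$ exists; the bound $c_i \leq m_i = 1$ forces $k \neq i$, and the length identity $|\mu + \alpha_k|^2 = |\mu|^2 + 2(\mu, \alpha_k) + |\alpha_k|^2$, combined with $\mu$ being long and $\mu + \alpha_k$ a root, forces $(\mu, \alpha_k) < 0$, contradicting $I$-dominance.

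The main obstacle will be the uniqueness step. The naive claim that $\theta$ is the only $I$-dominant element of $Y$ is false—in $B_n$ with $i = 1$, for instance, the highest short root $e_1$ is also $I$-dominant and lies in $Y$—so the restriction to long roots is essential, and one must combine the root-poset reasoning with the case analysis on $|\mu + \alpha_k|^2$ to exclude the short candidates.
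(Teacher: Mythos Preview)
Your argument is correct, but both for (i) and for (iv) it follows a different route from the paper's.

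For (i), you verify directly that every root has sign-consistent coordinates in $\Pi'$. The paper instead derives (i) as a corollary of (iv): once $w_i(\alpha_i)=\theta$ is established, one has $w_i(\Pi)=\{\theta\}\cup\big(-(\Pi\setminus\{\alpha_i\})\big)$, so $\Pi'=-w_i(\Pi)$ is the image of a basis under a Weyl group element, hence a basis.

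For (iv), you characterize $\theta$ as the unique long $I$-dominant root with $c_i=1$ and then show $w_i\alpha_i$ enjoys all four properties. This requires knowing that $\alpha_i$ is long, which you supply by a case check; it can also be seen uniformly, since the coefficient of $\alpha_i^\vee$ in $\theta^\vee$ is $m_i\,(\alpha_i,\alpha_i)/(\theta,\theta)$, a positive integer, so $m_i=1$ forces $(\alpha_i,\alpha_i)=(\theta,\theta)$. The paper's argument sidesteps this issue entirely via a height computation: since $w_i$ permutes the positive roots of $\Phi\setminus\Phi(\Pi\setminus\{\alpha_i\})$, both $w_i(\theta)$ and $w_i(\alpha_i)$ are positive; applying $w_i$ to $\theta=\alpha_i+\sum_{j\neq i}m_j\alpha_j$ and taking heights yields
\[
ht(w_i(\theta))=ht(w_i(\alpha_i))-(ht(\theta)-1),
\]
and the bounds $ht(w_i(\theta))\geq 1$, $ht(w_i(\alpha_i))\leq ht(\theta)$ force equality in both, whence $w_i(\alpha_i)=\theta$. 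This is shorter, needs no length considerations, and in fact yields the longness of $\alpha_i$ as a consequence rather than requiring it as an input. Your approach, on the other hand, is more structural and makes transparent \emph{why} $\theta$ is singled out: it is the dominant representative in the $W_I$-orbit on the slice $\{c_i=1\}$.
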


\begin{proof}
The first statement follows from the next ones, since then  $\{-\theta\} \cup
\Pi
\setminus \{\alpha_i\}$ would be the opposite of $w_i(\Pi)$, which is a basis.
The fact that $w_i$ is an involution that maps $\Pi \setminus \{\alpha_i\}$ into
$-(\Pi \setminus \{\alpha_i\})$ is well known. Moreover $w_i$ permutes the
positive roots on $\Phi\setminus\Phi(\Pi\setminus\{\alpha_i\})$. Hence, 
$ht(w_i(\theta)) = \sum m_j
ht(w_i(\alpha_j)) = ht(w_i(\alpha_i))-\sum_{j\neq i} m_j
|ht(w_i(\alpha_j))|=ht(w_i(\alpha_i))-(ht(\theta)-1)$,
and $ht(w_i(\theta))>0$. 
This implies that $ht(w_i(\alpha_i))=ht(\theta)$ and $ht(w_i(\theta))=1$,
whence 
$w_i(\alpha_i)=\theta$, and  $w_i(\theta)=\alpha_i$.
\end{proof}
\smallskip

For the general theory of affine root systems, we refer the reader to \cite{Kac}. 
We briefly describe the (untwisted) affine root system $\widehat \Phi$ associated with $\Phi$. 
\par 
We extend $\gen_\real\Phi$ to  a $n+1$ dimensional real vector 
space $\gen_\real\Phi\oplus \real \delta$ and set 
$$\widehat\Phi=\Phi+\ganz \delta:=\{\alpha+k\delta\mid  \alpha\in \Phi,\ k\in \ganz\}.$$
Then $\widehat\Phi$ is an affine root system in $\gen_\real\Phi\oplus \real \delta$ endowed with the positive semidefinite symmetric bilinear form that extends the scalar product of $\gen_\real\Phi$ and has $\real \delta$ as its kernel.   
\par
If we take $\alpha_0=-\theta+\delta$, then $\widehat \Pi:=\{\alpha_0\}\cup \Pi$
is a root basis for $\widehat \Phi$. The set of positive roots of $\widehat
\Phi$  with respect to 
$\widehat \Pi$ is $\widehat \Phi^+:=\Phi^+\cup \(\Phi+\ganz^+\delta\)$, where
$\ganz^+$ is the set of positive integers.
\par
Let $\mathfrak{g}$ be a complex simple Lie algebra, and $\mathfrak h$  a Cartan 
subalgebra of $\mathfrak g$ such that $\Phi$ is the root system of $\mathfrak g$
with respect to $\mathfrak h$. For each $\alpha\in\Phi$, let
$\mathfrak{g}_{\alpha}$ be the root space of $\alpha$.
For every choice of a basis $\Pi$ of $\Phi$, we have the corresponding standard Borel subalgebra 
$\mathfrak b(\Pi):=\mathfrak h\oplus \sum_{\alpha \in \Phi^+} \mathfrak{g}_{\alpha}$. 
We let $\mathfrak b := \mathfrak b(\Pi)$ if no confusion arises.
Being $\mathfrak h$-stable, any ideal $\mathfrak{i}$ of $\mathfrak b$ is
compatible with the root space decomposition. 
Since, given $\alpha, \alpha' \in \Phi^+$, 
$[\mathfrak{g}_{\alpha}, \mathfrak{g}_{\alpha'}]$ is equal to 
$\mathfrak{g}_{\alpha + \alpha'}$ if $\alpha + \alpha' \in \Phi$ 
and is trivial otherwise, 
if  $\mathfrak{i} = \sum_{\alpha \in \Gamma} \mathfrak{g}_{\alpha}$ is an ideal of $\mathfrak b$, then 
 $\Gamma \subseteq \Phi^+$ satisfies $(\Gamma + \Phi^+) \cap \Phi \subseteq \Gamma$, or, equivalently, $\Gamma$ is a 
dual order ideal in the root poset. If
 we further require that $\mathfrak{i}$ be abelian, than  
 $\Gamma $ must satisfy also the abelian condition: $(\Gamma + \Gamma) \cap \Phi = \emptyset$. 
Indeed, all abelian ideals of $\mathfrak{b}$ are of this kind since they must be $ad$-nilpotent (i.e., included in 
$\sum_{\alpha \in \Phi^+} \mathfrak{g}_{\alpha}$).
By a {\em principal abelian ideal of   $\mathfrak{b}$}, we mean an abelian ideal  $\mathfrak{i}$ of the form
 $\mathfrak{i}=\sum_{\alpha \in \Gamma} \mathfrak{g}_{\alpha}$, where $\Gamma$, as a 
subposet of the root poset, has a minimum $\eta$ 
(hence $\Gamma$ is an interval since the highest root $\theta$ is the maximum). 
A principal abelian ideal is generated by 
any non-zero vector of the root space $\mathfrak{g}_{\eta}$.

\section{Standard Parabolic Faces}

In this section, we consider a set of distinguished faces of the
root polytope $P_\Phi$, and analyze their rich combinatorial structure.
\par 
A proper face
of $\mc P_\Phi$ is, by definition,  the intersection of  $\mc
P_\Phi$ with some affine hyperplane that does not split   $\mc P_\Phi$.
Moreover, any intersection of faces is a face. 
Recall that, in our notation, the highest root $\theta$ has $m_i$ as $i$-th
coordinate w.r.t. $\Pi$, i.e.,  $(\breve{\omega}_i, \theta)= m_i$. Hence, each
hyperplane $(\breve{\omega}_i,-)= m_i$ contains a face, of some dimension, of 
$\mc P_\Phi$. 
\smallskip

\begin{defi}
For each $I\subseteq[n]$ and $i\in [n]$,   we set  
$$F_I:=\{x\in  \mc P_\Phi\mid (\breve\omega_i, x)=m_i\ \forall i\in I\}\qquad 
\text{and } \qquad  F_i:=F_{\{i\}}.$$ 
Thus, the $F_i$ are proper faces of  $\mc P_\Phi$, all containing $\theta$. We
call them the {\it coordinate faces}.  
Moreover, $F_\emptyset=\mc P_\Phi$,  and, for all $I\neq\emptyset$, 
$F_I$ is a proper nonempty face,
since
$F_I=\cap_{i\in I} F_i$.  We call the face $F_I$, for $I\neq \emptyset$,
the {\it standard parabolic faces} of $P_\Phi$. 
We also set 
$$V_i:=F_i\cap \Phi,\qquad \text{and}\qquad V_I:=F_I\cap \Phi.$$  
It is clear that $F_I$ is the convex hull of $V_I$. 
\end{defi}
\smallskip

For any $I\subseteq[n]$, we set 
$$
\Pi_I:=\{\alpha_i \in \Pi \mid i\in I\}.
$$
\smallskip

Moreover, we denote by $\widehat\Phi_0(\widehat \Pi\setminus \Pi_I)$ the irreducible component of $\alpha_0$ in \hbox{$\widehat\Phi(\widehat \Pi\setminus \Pi_I)$}, and set 
$$ 
\widehat\Phi_0^+(\widehat \Pi\setminus \Pi_I):=\widehat\Phi_0(\widehat \Pi\setminus \Pi_I)\cap\widehat\Phi^+, \qquad (\widehat \Pi\setminus\Pi_I)_0:=\widehat\Phi_0(\widehat \Pi\setminus\Pi_I)\cap \widehat \Pi.$$
\bigskip

The following result provides a first connection of the root polytope with the
extended root system. 

\begin{lem}\label{affine}
Let $I\subseteq [n]$, $I\neq \emptyset$, and consider the subset 
$$-V_I+\delta:=\{-\alpha+\delta\mid \alpha\in V_I\}$$
of the affine root system $\widehat\Phi$.
Then 
$$-V_I+\delta=\widehat\Phi^+(\widehat \Pi\setminus
\Pi_I)\setminus\Phi=\widehat\Phi_0^+
(\widehat \Pi\setminus \Pi_I)\setminus\Phi.$$
\end{lem}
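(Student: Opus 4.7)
\smallskip

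\noindent\textbf{Proof plan.}

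The plan is to work in coordinates with respect to the affine basis $\widehat\Pi$ and exploit the identity $\delta=\alpha_0+\theta$, which gives $-\alpha+\delta=\alpha_0+(\theta-\alpha)$ for any $\alpha\in\Phi$. Thus the map $\alpha\mapsto -\alpha+\delta$ transfers coordinates as $c_0(-\alpha+\delta)=1$ and $c_j(-\alpha+\delta)=m_j-c_j(\alpha)$ for $j\in[n]$. Since, by definition, $\alpha\in V_I$ iff $\alpha\in\Phi$ and $c_j(\alpha)=m_j$ for all $j\in I$, this coordinate identity is the bridge between $V_I$ and the roots in $\widehat\Phi$ supported on $\widehat\Pi\setminus\Pi_I$.

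For the inclusion $-V_I+\delta\subseteq\widehat\Phi_0^+(\widehat\Pi\setminus\Pi_I)\setminus\Phi$, I would first observe that $V_I\subseteq\Phi^+$ (since for $\alpha\in\Phi^-$ the coefficient $c_i(\alpha)\le 0\ne m_i$), and that every positive root has $c_j(\alpha)\le m_j$. Hence for $\alpha\in V_I$, the element $-\alpha+\delta=\alpha_0+\sum_{j\notin I}(m_j-c_j(\alpha))\alpha_j$ is a nonnegative integer combination of $\widehat\Pi\setminus\Pi_I$ with positive $\alpha_0$-coefficient; it belongs to $\widehat\Phi$ because $-\alpha\in\Phi$, it lies outside $\Phi$ because $c_0=1$, and it lies in the component $\widehat\Phi_0$ because it involves $\alpha_0$.

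The reverse inclusion is the only real point. Given $\beta\in\widehat\Phi^+(\widehat\Pi\setminus\Pi_I)\setminus\Phi$, write $\beta=k\alpha_0+\sum_{j\notin I}c_j\alpha_j$ with $k,c_j\ge 0$ and $k\ge 1$ (because $\beta\notin\Phi$). Since $\widehat\Phi=\Phi+\ganz\delta$, one has $\gamma:=\beta-k\delta\in\Phi$; its $j$-th coefficient equals $-km_j$ for every $j\in I$. The key step is the bound
$$
km_j=|c_j(\gamma)|\le m_j\qquad\text{for }j\in I\ne\emptyset,
$$
which forces $k=1$. Then $\alpha:=-\gamma$ satisfies $c_j(\alpha)=m_j$ for $j\in I$ and $c_j(\alpha)=m_j-c_j\ge 0$ for $j\notin I$; since all coordinates of a root in $\Pi$ have the same sign, $\alpha\in\Phi^+$, and the preceding equalities put $\alpha$ in $V_I$ with $\beta=-\alpha+\delta$. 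Finally, the identity $\widehat\Phi^+(\widehat\Pi\setminus\Pi_I)\setminus\Phi=\widehat\Phi_0^+(\widehat\Pi\setminus\Pi_I)\setminus\Phi$ is immediate: any root with $c_0\ge 1$ involves $\alpha_0$, hence lies in the irreducible component containing $\alpha_0$.

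The main obstacle is the level bound $k\le 1$, but the $\widehat\Pi$-expansion makes it essentially a one-line estimate once one recalls that $|c_j(\gamma)|\le m_j$ holds for every root $\gamma\in\Phi$ and every $j\in[n]$, which is a standard consequence of the fact that $\pm\theta$ have coordinates $\pm m_j$.
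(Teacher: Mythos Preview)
Your proof is correct and follows essentially the same coordinate approach as the paper, via the identity $-\alpha+\delta=\alpha_0+(\theta-\alpha)$. The paper's own argument is considerably terser: for the reverse inclusion it simply declares ``it is clear that each positive root in $\widehat\Phi_0(\widehat \Pi\setminus \Pi_I)$ that has $\alpha_0$ in its support belongs to $-V_I+\delta$,'' whereas you supply the substantive missing step, namely the level bound $k=c_0(\beta)\le 1$ obtained from $|c_j(\gamma)|\le m_j$ for $j\in I$.
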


\begin{proof}
It is clear that the coordinate $c_0$ is constantly $1$ on $-\Phi^+ +\delta$. 
Since $\beta\in V_I$ if and only if $(\beta, \breve\omega_i)=(\theta,
\breve\omega_i)$ for all $i \in I$, and $\alpha_0=-\theta+\delta$, we obtain
that 
$-V_I+\delta$ is contained in the standard parabolic subsystem
$\widehat\Phi(\widehat \Pi\setminus \Pi_I)$ of
  $\widehat\Phi$. Indeed, since each root in  $-V_I+\delta$ has $\alpha_0$ in
its support, $-V_I+\delta$ is contained in the set of the positive
roots of the irreducible component of $\widehat\Phi(\widehat
\Pi\setminus \Pi_I)$ that contains $\alpha_0$. \par
Conversely, it is clear that each positive root in $\Phi_0(\widehat \Pi\setminus \Pi_I)$ that has $\alpha_0$ in its support belongs to $-V_I+\delta$.
\end{proof}
\medskip

\begin{pro}
\label{principale}
\label{laminimaelunga}
Let $I\subseteq [n]$, $I\neq \emptyset$. Then:
\begin{enumerate} 
\item
as a subposet of $\Phi^+$, $V_I$ has maximum and minimum, and both of them are
long roots;
\item
$V_I$ is an abelian dual order ideal, hence the subspace $\mathfrak{i}_{V_I} :=
\sum_{\alpha \in V_I} \mathfrak{g}_{\alpha}$ is a principal  abelian ideal of
$\mathfrak{b}$.
\end{enumerate}
\end{pro}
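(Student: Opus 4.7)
The plan is to leverage Lemma~\ref{affine}, which identifies $V_I$ with $\widehat\Phi_0^+(\widehat\Pi\setminus\Pi_I)\setminus\Phi$ via $\alpha\mapsto-\alpha+\delta$, and to observe that this map is order-reversing when both sides are viewed as subposets of the respective positive root posets (the two differences are mutual negatives). Since $I\neq\emptyset$, the set $\widehat\Pi\setminus\Pi_I$ is proper in $\widehat\Pi$, so $\widehat\Phi(\widehat\Pi\setminus\Pi_I)$ is a finite standard parabolic subsystem of $\widehat\Phi$, and in particular its component $\widehat\Phi_0(\widehat\Pi\setminus\Pi_I)$ is a finite irreducible root system whose basis $(\widehat\Pi\setminus\Pi_I)_0$ contains $\alpha_0$.

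For part~(1), I would identify the extrema of $V_I$ by reading off those of the image set. On one side, the unique highest root $\hat\theta$ of $\widehat\Phi_0(\widehat\Pi\setminus\Pi_I)$ has full support in $(\widehat\Pi\setminus\Pi_I)_0$, hence in particular involves $\alpha_0$, so it belongs to the image set and is clearly its maximum. Therefore $\min V_I=\delta-\hat\theta$, and this root is long because $\hat\theta$ is the highest root of an irreducible finite root system and $\|\delta-\hat\theta\|^2=\|\hat\theta\|^2$, as $\delta$ lies in the radical of the bilinear form. Symmetrically, $\alpha_0$ itself is the minimum of the image set (any $\gamma$ there has $c_0(\gamma)\geq 1$, so $\gamma-\alpha_0$ is a nonnegative combination of simple affine roots and $\gamma\geq\alpha_0$), whose preimage is $\max V_I=\theta$, the highest root of $\Phi$ and hence long.

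For part~(2), I would argue directly from $(\gamma,\breve\omega_i)=c_i(\gamma)$ together with the standard bound $|c_i(\gamma)|\leq m_i$ for every $\gamma\in\Phi$. The dual order ideal condition follows because $\beta\geq\alpha\in V_I$ in $\Phi^+$ forces $m_i=c_i(\alpha)\leq c_i(\beta)\leq m_i$, giving $\beta\in V_I$. For the abelian condition, if $\alpha+\beta\in\Phi$ with $\alpha,\beta\in V_I$, then $c_i(\alpha+\beta)=2m_i\geq 2$ for any $i\in I$, violating the uniform bound. Principality then follows immediately from the existence of the minimum established in part~(1), which exhibits $V_I$ as the principal filter $\{\gamma\in\Phi^+\mid\gamma\geq\delta-\hat\theta\}$.

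The argument poses no substantial obstacle once Lemma~\ref{affine} is in hand: the main care is in the bookkeeping, namely verifying the correct order-reversal under $\alpha\mapsto-\alpha+\delta$ and confirming that $\delta-\hat\theta$ really lies in $\Phi^+$ (equivalently, that $c_0(\hat\theta)=1$), which is precisely what that lemma encodes.
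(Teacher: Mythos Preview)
Your proof is correct and follows essentially the same route as the paper: both identify $\min V_I$ as $\delta-\hat\theta$ via Lemma~\ref{affine}, where $\hat\theta$ is the highest root of $\widehat\Phi_0(\widehat\Pi\setminus\Pi_I)$, and both prove part~(2) directly from the coordinate bound $c_i(\gamma)\le m_i$. Your version is in fact more explicit than the paper's, which does not spell out the order-reversal or the reason the minimum is long; the one point worth making fully explicit is that $\hat\theta$ being long \emph{in the subsystem} implies long \emph{in $\Phi$} because the subsystem already contains the long root $\alpha_0$, so the maximal squared length in the subsystem equals $\|\theta\|^2$.
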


\begin{proof}
(1) It is clear that $\theta$ is the maximum of $V_I$. 
\par
Let  $\widehat\eta_I$ be the highest root of the irreducible component 
$\widehat\Phi_0(\widehat \Pi\setminus \Pi_I)$, 
and set 
$\eta_I:=- \widehat\eta_I +\delta$. By Lemma \ref{affine}, we directly obtain
that 
$\eta_I$ is the minimum of $V_I$.
\par 
(2) The fact that $V_I$ is an abelian dual order ideal follows by noting that
the functional $(\breve{\omega}_i,-)$ cannot take values $> m_i$ on the roots.
In fact, given $\alpha \in V_{I}$ and $\beta \in \Phi^+$, $\beta \geq \alpha$,
then $m_i=(\breve{\omega}_i,\alpha)\leq (\breve{\omega}_i,\beta)\leq m_i$, for
all $i\in I$: hence $\beta \in V_I$ and $V_I$ is a dual order ideal. The
abelianity follows by the fact that, for $\alpha, \alpha' \in V_I$, $\alpha +
\alpha'$ cannot be a root since $(\breve{\omega}_i, \alpha + \alpha') = 2m_i$.  
Since $V_I$ has a minimum, the abelian ideal of $\mf b$ corresponding to $V_I$
is principal.
\end{proof}
\medskip

\begin{rem}
For any subset $\Sigma$ of $\Pi$, the root subsystem 
$\widehat \Phi(\{\alpha_0\}\cup \Sigma)$, through the natural 
projection of $\gen_\real\widehat\Phi$ onto $\gen_\real\Phi$, maps onto the root
subsystem $\Phi(\{\theta\}\cup \Sigma)$ of $\Phi$. If $\Sigma$ is a proper
subset of $\Pi$, this is a bijection and a root system isomorphism. In
particular, $\{-\theta\}\cup \Sigma$ is a root basis for $\Phi(\{\theta\}\cup
\Sigma)$. 
It is clear that, with respect to this basis, the coordinate relative to
$-\theta$ is at most $1$,  for all roots in $\Phi(\{\theta\}\cup \Sigma)$.
Therefore, if $-\eta$ is the highest root, by Proposition \ref{standard4},
$\{\eta\}\cup \Sigma$ is a root basis, too.  The set of positive roots with respect
to this latter basis is $\Phi(\{\theta\}\cup \Sigma)\cap \Phi^+$, i.e.,
$\Phi^+(\{\theta\}\cup \Sigma)$, according to our notation.
\end{rem}

It is clear, from Lemma \ref{affine}, that the map $I\mapsto F_I$, that associates to the subset $I$ of $[n]$ the
corresponding standard parabolic face, is not injective, in general. 
In fact, this is an injective map only when $\Phi$ is of type $A_1$ or $A_2$. 
We determine explicitly, for each parabolic face $F$, the set of all $I$ such that $F=F_I$.
\par
For any $I\subseteq [n]$, we set 
$$\ov I:=\{k\mid \alpha_k \not\in (\widehat \Pi\setminus \Pi_I)_0\}$$
and
$$\partial I:=\{ j\mid \alpha_j\in \Pi_I, \text{ and }\exists\, \beta \in  (\widehat \Pi\setminus \Pi_I)_0 \text{ s. t. } \beta\not\perp\alpha_j\}.$$ 
We call $\ov I$ the {\it closure} and $\partial I$ the {\it border} of $I$. 
\par
By definition 
$$
(\widehat\Pi\setminus \Pi_I)_0=\widehat \Pi \setminus \Pi_{\ov I}.
$$
The closure $\ov I$ and the border $\partial I$ depend only on
$(\widehat\Pi\setminus \Pi_I)_0$, 
hence $\ov I = \ov{\partial I}$ and $\partial I=\partial \ov I$.  
\par

If we denote by $\Gamma$ the extended Dynkin diagram of $\Phi$, and by $\Gamma(\Sigma)$, the subdiagram of $\Gamma$ induced by $\Sigma$,  for any $\Sigma\subseteq\widehat \Pi$, then $\Pi_{\ov I}$ is the set of all simple roots exterior to the connected subdiagram $\Gamma((\widehat\Pi\setminus \Pi_I)_0)$, while $\partial I$ is the 
set of simple roots that are exterior and adjacent to $\Gamma((\widehat\Pi\setminus \Pi_I)_0)$. In this sense, $\partial I$ is indeed the border of $\ov I$. 

\begin{rem}
The map $I\mapsto \ov I$ has actually the properties of a topological closure operator on the power set of $[n]$. Indeed, it is clear from the definition that  $I\subseteq \ov I$, and  that $I=\ov I$ if and only if $\Gamma(\widehat\Pi\setminus \Pi_I)$ is connected. Hence we get  $\ov{\ov I}=\ov I$ and $\ov \emptyset =\emptyset$; moreover, $\ov {I\cup J}=\ov I\cup \ov J$,  for all $I, J\subseteq [n]$, since the Dynkin diagram of any finite system is a tree.
\end{rem}

We illustrate the definition of $\ov I$ and $\partial I$  in the following example. 
\bigskip

\begin{exa} Let $I=\{5, 7\}$ in $\Phi$ of type $B_9$. Then we see that 
$\partial I=\{5\}$ and $\ov I=\{5, \dots, 9\}$:
$$
\begin{tikzpicture}
\foreach \x in {2,3,4,5,6,7,8}
\draw{(\x-1,0)--(\x,0)}; 
\draw(2,0)--(2,1);
\draw(8,.06)--(9,.06);
\draw(8,-.06)--(9,-.06);
\draw(8.4,.15)--(8.6,0)--(8.4,-.15);
\foreach \x in {1,2,3,4,5,6,7,8,9}
{\draw[fill=white]{(\x,0) circle(3pt)};
\node[below]at(\x,-.1){$\ssc \alpha_\x$};}
\draw[fill=black]{(2,1) circle(3pt)};
\draw[rounded corners](4.5, .5)--(5.5, .5)--(5.5, -.5)--(4.5,-.5)-- cycle;
\draw[rounded corners](6.5, .5)--(7.5, .5)--(7.5, -.5)--(6.5,-.5)-- cycle;
\draw(5.4, .5)..controls(5.8, .6)and(6.2, .6)..(6.6, .5);
\node[above] at (6,.7){$\ssc I=\{5,7\}$};

\foreach \x in {2,3,4}
\foreach \y in {-3}
{
\draw(\x-1,\y)--(\x,\y); 
\draw(2,\y)--(2,\y+1);
\draw(8,\y+.06)--(9,\y+.06);
\draw(8,\y-.06)--(9,\y-.06);
}
\draw(8.4,-3-.15)--(8.6,-3)--(8.4,-3+.15);
\foreach \x in {1,2,3,4,6,8,9}
\foreach\y in{-3}
{\draw[fill=white](\x,\y) circle(3pt);
\node[below]at(\x,\y-.1){$\ssc \alpha_\x$};
\draw[fill=black](2,\y+1) circle(3pt);
}
\draw[rounded corners](.5,-3-.6)--(4.5,-3-.6)--(4.5, -3+1.5)--(.5,-3+1.5)-- cycle;
\node[above]at(3.3,-3+.5) {$(\widehat \Pi\setminus \Pi_I)_0$};
\foreach \y in {-5.7}
\foreach \x in {6,7,8}
{
\draw(\x-1,\y)--(\x,\y); 
\draw(8,\y+.06)--(9,\y+.06);
\draw(8,\y-.06)--(9,\y-.06);
}
\draw(8.4,-5.7-.15)--(8.6,-5.7)--(8.4,-5.7+.15);
\foreach \x in {5,6,7,8,9}
\foreach\y in{-5.7}
{\draw[fill=white](\x,\y) circle(3pt);
\node[below]at(\x,\y-.1){$\ssc \alpha_\x$};
}
\draw[rounded corners] (4.5,-5.7-.5)--(5.5,-5.7-.5)--(5.5, -5.7+.5)--(4.5,-5.7+.5)-- cycle;
\node[above]at(7,-5.7+.6){$\ssc\partial I=\{5\} \quad\ov I=\{5, 6, 7, 8, 9\}$};
\end{tikzpicture}
$$
\end{exa}
\bigskip

The following proposition follows directly from Lemma \ref{affine} and from the above definitions.  

\begin{pro}
\label{minmaxI}
\label{chiusuradigalois}
\label{biezione-st-di}
\label{intervallo}
Fix $I\subseteq [n]$ and let $J \subseteq [n]$. Then $F_J
= F_I$ if and only if  
$$\partial I \subseteq J \subseteq \ov I .$$
In particular,  the standard parabolic faces of $\mathcal P_{\Phi}$ are in  bijection with the connected subdiagrams of the extended Dynkin diagram of $\Phi$ that contain the affine node. This bijection is an isomorphism of posets with respect to the inclusions.
\end{pro}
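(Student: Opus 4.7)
The plan is to prove the proposition by a chain of three equivalences:
(i) $F_J = F_I \Leftrightarrow V_J = V_I$;
(ii) $V_J = V_I \Leftrightarrow (\widehat\Pi\setminus\Pi_J)_0 = (\widehat\Pi\setminus\Pi_I)_0$;
(iii) $(\widehat\Pi\setminus\Pi_J)_0 = (\widehat\Pi\setminus\Pi_I)_0 \Leftrightarrow \partial I \subseteq J \subseteq \ov I$.
The first equivalence is immediate from $F_I = \mathrm{conv}(V_I)$ and $V_I = F_I\cap\Phi$. The heart of the proof is step (iii), which rests on the tree structure of the extended Dynkin diagram $\Gamma$.

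For step (ii), Lemma \ref{affine} yields a bijection $V_I \leftrightarrow \widehat\Phi_0^+(\widehat\Pi\setminus\Pi_I)\setminus\Phi$, so it suffices to recover $(\widehat\Pi\setminus\Pi_I)_0$ from this subset of $\widehat\Phi$. The idea is to take the union of the supports: each $\alpha_j\in(\widehat\Pi\setminus\Pi_I)_0$ sits on a unique path in the connected subdiagram from $\alpha_0$ to $\alpha_j$, and by Proposition \ref{standard}(1) the sum of the simple roots along this path is a positive root whose support is exactly that path, in particular an element of $\widehat\Phi_0^+(\widehat\Pi\setminus\Pi_I)\setminus\Phi$ whose support contains $\alpha_j$. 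Hence the simple-root set $(\widehat\Pi\setminus\Pi_I)_0$ is determined by $V_I$, and the equivalence follows.

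For step (iii), the forward direction is immediate from the general inclusions $\partial K \subseteq K \subseteq \ov K$, which hold directly from the definitions, together with the fact that $\partial I$ and $\ov I$ depend only on $(\widehat\Pi\setminus\Pi_I)_0$. For the converse, assume $\partial I \subseteq J \subseteq \ov I$. Since $J \subseteq \ov I$ gives $\widehat\Pi\setminus\Pi_J \supseteq \widehat\Pi\setminus\Pi_{\ov I} = (\widehat\Pi\setminus\Pi_I)_0$, and the latter is connected and contains $\alpha_0$, we get $(\widehat\Pi\setminus\Pi_I)_0 \subseteq (\widehat\Pi\setminus\Pi_J)_0$. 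For the reverse inclusion, take $\alpha_k \in (\widehat\Pi\setminus\Pi_J)_0$ and consider the unique path in $\Gamma$ from $\alpha_0$ to $\alpha_k$, which lies in $\widehat\Pi\setminus\Pi_J$. If this path ever exits $(\widehat\Pi\setminus\Pi_I)_0$, let $\alpha_j$ be the first node of the path outside it; then $\alpha_j$ is adjacent in $\Gamma$ to a node of $(\widehat\Pi\setminus\Pi_I)_0$. Were $\alpha_j \in \widehat\Pi\setminus\Pi_I$, it would lie in the $\alpha_0$-component of $\widehat\Pi\setminus\Pi_I$, contradicting its choice; hence $\alpha_j \in \Pi_I$, so by definition $j \in \partial I \subseteq J$, contradicting $\alpha_j \in \widehat\Pi\setminus\Pi_J$. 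The expected main obstacle is precisely this path-chasing argument, since it is the only place where the tree structure is used in an essential way.

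For the ``in particular'' statement, the map $F_I \mapsto (\widehat\Pi\setminus\Pi_I)_0$ is well-defined on standard parabolic faces by (i)--(iii), and it is injective by the same chain of equivalences read in reverse; surjectivity onto proper connected subdiagrams of $\Gamma$ containing $\alpha_0$ is obtained by setting $I = \{k\in[n] \mid \alpha_k \notin \Sigma\}$ for a given subdiagram $\Sigma$. Finally, for the poset isomorphism, the same argument of (ii), applied to $V_I \subseteq V_J$ instead of $V_I = V_J$ (extracting supports still yields the inclusion $(\widehat\Pi\setminus\Pi_I)_0 \subseteq (\widehat\Pi\setminus\Pi_J)_0$, and conversely simple-root containment among parabolic subsystems gives positive-root containment), combined with $F_I \subseteq F_J \Leftrightarrow V_I \subseteq V_J$, shows that the bijection is order-preserving in both directions.
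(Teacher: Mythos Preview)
Your proof is correct and spells out in full detail what the paper leaves implicit (the paper's own proof is just the one line ``follows directly from Lemma~\ref{affine} and from the above definitions''); the approach is the same. One minor inaccuracy worth fixing: you repeatedly invoke ``the tree structure of the extended Dynkin diagram $\Gamma$'' and ``the unique path in $\Gamma$'', but in type $A_n$ the extended diagram is a cycle, not a tree. This is harmless for your argument, since the path-chasing in step~(iii) takes place inside $(\widehat\Pi\setminus\Pi_J)_0$ with $J\supseteq\partial I\neq\emptyset$ (so $J\neq\emptyset$), and every \emph{proper} subdiagram of the extended diagram is a forest; in fact you never use uniqueness of the path, only that some path from $\alpha_0$ to $\alpha_k$ exists within $(\widehat\Pi\setminus\Pi_J)_0$.
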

\smallskip

\begin{rem}
\label{radice-minima}  Let $I \subseteq [n]$, $I\neq\emptyset$. 
\item{(1)} Consider $V_I$ and $\widehat\Phi^+
(\widehat \Pi\setminus \Pi_{\ov I})\setminus\Phi$ as partial order subsets of
the corresponding root posets with respect to the bases $\Pi$ and $\widehat
\Pi\setminus \Pi_{\ov I}$, respectively. The map from $V_I$ to $\widehat\Phi^+
(\widehat \Pi\setminus \Pi_{\ov I})\setminus\Phi$ sending $\beta$ to $-\beta + 
\delta$ (Lemma \ref{affine}) is an anti-isomorphism of posets. 
\item{(2)} Let $\eta_I$ be the minimal root in $V_I$ (Proposition \ref{principale}). Then $-\eta_I+\delta$ is the highest root of $\widehat\Pi\setminus \Pi_{\ov I}$, hence has positive coefficient in all roots in $\widehat \Pi\setminus \Pi_{\ov I}$ and, as observed in the the proof of Lemma \ref{affine}, the coefficient $c_0$ is $1$. It follows that $\ov I$ can be characterized in term of $\eta_I$: 
$$\ov I= \{i\in[n] \mid c_i(\eta_I) = m_i \}$$ 
(recall that, in our notation, $\theta= \sum m_i \alpha_i$). 
\end{rem}
\bigskip

In the following Proposition we characterize the non empty closed subsets of $[n]$.  
For any $\eta\in \Phi^+$, we set
$$I(\eta):=\{i\in [n]\mid c_i(\eta)=m_i\}.$$

\begin{pro}\label{ieoviuguale}
Let $I\subseteq[n]$, $I\neq\emptyset$. Then $I=\ov I$ if and only there exists 
$\eta\in\Phi^+$ such that $I=I(\eta)$.
\end{pro}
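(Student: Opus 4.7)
The plan is to leverage the characterization of $\ov I$ given in Remark \ref{radice-minima}(2), namely $\ov I = \{i\in[n] \mid c_i(\eta_I) = m_i\} = I(\eta_I)$, where $\eta_I$ is the minimum of $V_I$. This immediately settles the implication $I = \ov I \Rightarrow \exists\,\eta$: one simply takes $\eta := \eta_I \in \Phi^+$.

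For the converse, suppose $\eta \in \Phi^+$ satisfies $I(\eta) = I$. My first step would be to observe that $\eta \in V_I$: the equalities $(\breve\omega_i,\eta) = c_i(\eta) = m_i$ for all $i\in I$ place $\eta$ in $F_I\cap\Phi$. Since $\eta_I$ is the minimum of $V_I$ (Proposition \ref{principale}), the difference $\eta - \eta_I$ is a nonnegative integer combination of simple roots, so $c_k(\eta) \geq c_k(\eta_I)$ for each $k\in[n]$. Now fix $k \in \ov I$: Remark \ref{radice-minima}(2) gives $c_k(\eta_I) = m_k$, while on the other hand $c_k(\eta) \leq m_k$ because $\theta - \eta$ is likewise a nonnegative combination of simple roots ($\theta$ being the maximum of the root poset). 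These two inequalities collapse to $c_k(\eta) = m_k$, i.e., $k \in I(\eta) = I$. This proves $\ov I \subseteq I$, and combined with the trivial inclusion $I \subseteq \ov I$ (immediate from the definition of $\ov I$, or from Proposition \ref{intervallo} taken with $J = I$), we conclude $I = \ov I$.

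I do not anticipate a serious obstacle: the heavy lifting has already been done in Lemma \ref{affine} and Remark \ref{radice-minima}(2). The only additional ingredient is the squeezing argument on the coordinate $c_k$, trapped between the lower bound $c_k(\eta_I) = m_k$ (from $\eta \geq \eta_I$ inside $V_I$) and the universal upper bound $c_k(\eta) \leq c_k(\theta) = m_k$. If any step warrants extra care, it is merely verifying that an $\eta$ with $I(\eta) = I$ genuinely lands in $V_I$, which is tautological from the definitions of $V_I$ and $I(\eta)$.
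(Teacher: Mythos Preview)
Your proof is correct. The forward direction coincides with the paper's argument (both appeal to Remark \ref{radice-minima}(2) and take $\eta=\eta_I$), but for the converse you take a different route. The paper observes that $\eta\in F_{I(\eta)}$ while $\eta\notin F_J$ for every $J\supsetneq I(\eta)$ (since then some coordinate condition $c_j(\eta)=m_j$ fails), so $I(\eta)$ is the \emph{maximum} of $\{J\subseteq[n]\mid F_J=F_{I(\eta)}\}$; Proposition \ref{intervallo} then identifies that maximum as $\ov{I(\eta)}$. Your argument instead re-invokes Remark \ref{radice-minima}(2) and runs a coordinate squeeze $m_k=c_k(\eta_I)\leq c_k(\eta)\leq m_k$ for $k\in\ov I$. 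Both are short; the paper's version is a shade more conceptual in that it reads $I(\eta)=\ov{I(\eta)}$ straight off the interval description, whereas yours is more self-contained at the level of root coordinates and avoids appealing to Proposition \ref{intervallo} beyond the trivial inclusion $I\subseteq\ov I$.
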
 

\begin{proof}
As seen in the above remark, if $I=\ov I$, then $I=I(\eta_I)$. 
Conversely, let $\eta\in \Phi^+$.  
Then $\eta \in F_{I(\eta)}$, but $\eta \not\in F_{J}$ for all $J \supsetneq I(\eta)$, hence $I(\eta)$ is the maximum of $\{I \subseteq [n] \mid F_I= F_{I(\eta)} \}$.
By Proposition \ref{intervallo}, it follows that  $I(\eta)=\ov{I(\eta)}$.
\end{proof}
\smallskip

\begin{rem}
For all $w$ in the affine Weyl group $\widehat W$ of $\Phi$, let 
$N(w):=\{\alpha\in \widehat \Phi^+\mid  w^{-1}(\alpha)\in -\widehat \Phi^+\}$. It is is well known that $N(w)$ uniquely determines $w$ and that, for all $v, w\in \widehat W$, 
$N(vw)=\big(N(v) {\vartriangle}\, v(N^\pm(w))\big)\cap \widehat{\Phi}^+,$
where, for all $S \subseteq  \widehat \Phi^+$,
$S^\pm$ stands for $S\cup -S$, and $\vartriangle$ denotes the symmetric
difference. 
From this last relation, we easily obtain that, given a standard parabolic face $F_I$,   
$$-V_I+\delta=\widehat \Phi^+(\widehat\Pi\setminus \Pi_{\ov I})\setminus\widehat
\Phi^+(\Pi\setminus \Pi_{\ov I})=N(w_{0\ov I}\widehat w_{0\ov I} ),$$
where $\widehat w_{0\ov I} $ is the longest element in 
$\widehat W\la\widehat\Pi\setminus \Pi_{\ov I}\ra$ and  $w_{0\ov I}$ is the
longest element in $W\la\Pi\setminus\Pi_{\ov I}\ra$.
\par
It is a general fact that, if $V$ is an abelian dual order ideal of $\Phi^+$, 
then there exists an element $w_V$ in $\widehat W$ such that $-V+\delta=N(w_V)$.
In fact, the correspondence  $V\leftrightarrow w_V$ is a bijection between the abelian 
dual order ideals of $\Phi^+$ and the set of all $w$ in $\widehat W$ such that
$N(w) \subseteq -\Phi^++\delta$ (Peterson, see \cite{K}). 
\end{rem}
\smallskip

By Proposition \ref{intervallo},  the set of the $J$ giving the face $F_I$ is
an 
interval in the poset of the subsets of $[n]$ with minimum  $\partial I$ and maximum $\ov I$. 
We thus obtain, for every root system of rank $n$, a decomposition of
the Boolean algebra of rank $n$ as a disjoint union of intervals whose number
is the number of standard parabolic faces $+ 1$.
\par
Next we show that the maximum $\ov I$ yields the dimension and
the number of roots in the standard parabolic face $F_I$. We let
$E_{F_I}:= \gen_\real \{\alpha - \alpha' \mid \alpha,\alpha' \in F_I\}$ be
the vector subspace underlying the smallest affine subspace containing $F_I$.

\begin{pro}
\label{facceparabolichestandard}
\label{chiusi}
For all nonempty $I\subseteq[n]$, 
$$
E_{F_I}=\gen_\real \( ( \widehat \Pi \setminus \Pi_{I} )_0 \setminus
\{\alpha_0 \} \) =\gen_\real (\Pi\setminus \Pi_{\ov I}).\eqno{(1)}
$$ 
Hence, for all $I\subseteq[n]$,
$$
\dim F_I=\text{\it rk}\; \widehat\Phi_0(\widehat \Pi\setminus \Pi_I)-1=n-|\ov I|.
\eqno{(2)}
$$
Moreover, 
\item
$$
|V_I|= 
\frac{1}{2}\left(\Big|\widehat\Phi(\widehat\Pi\setminus \Pi_{ I})\Big| -
\Big|\Phi(\Pi\setminus \Pi_{I})\Big|\right)=
\frac{1}{2}\left(\Big|\widehat\Phi(\widehat\Pi\setminus \Pi_{\ov I})\Big| -
\Big|\Phi(\Pi\setminus \Pi_{\ov I})\Big|\right).
\eqno{(3)}
$$
\end{pro}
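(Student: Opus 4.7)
The plan is to work in the affine root system via Lemma~\ref{affine}. Since $F_I=\mathrm{conv}(V_I)$, one has $E_{F_I}=\gen_\real(V_I-V_I)$. Setting $S:=\widehat\Phi_0^+(\widehat\Pi\setminus\Pi_I)\setminus\Phi$, the map $\alpha\mapsto-\alpha+\delta$ from Lemma~\ref{affine} is a bijection $V_I\to S$ and $\alpha-\alpha'=(-\alpha'+\delta)-(-\alpha+\delta)$, so $E_{F_I}=\gen_\real(S-S)$.

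For (1): every $\widehat\gamma\in S$ lies in $\widehat\Phi_0(\widehat\Pi\setminus\Pi_I)$, whose simple system is $(\widehat\Pi\setminus\Pi_I)_0$, and satisfies $c_0(\widehat\gamma)=1$ (the only source of the $\delta$-component). So differences of elements of $S$ have zero $\alpha_0$-coefficient and lie in $\gen_\real((\widehat\Pi\setminus\Pi_I)_0\setminus\{\alpha_0\})=\gen_\real(\Pi\setminus\Pi_{\ov I})$ by the definition of $\ov I$. The reverse inclusion, which is the main obstacle, amounts to exhibiting each $\alpha_j\in\Pi\setminus\Pi_{\ov I}$ as a difference of two elements of $S$. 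Using the connectedness of the Dynkin subdiagram on $(\widehat\Pi\setminus\Pi_I)_0$, I would pick a path $\alpha_0=\beta_0,\beta_1,\ldots,\beta_r=\alpha_j$; by Proposition~\ref{standard}(1) applied inside the (finite) irreducible root system $\widehat\Phi_0(\widehat\Pi\setminus\Pi_I)$, each partial sum $\sigma_k:=\beta_0+\cdots+\beta_k$ is a positive root, and $\alpha_0\in\mathrm{supp}(\sigma_k)$ forces $\sigma_k\in S$, so $\sigma_r-\sigma_{r-1}=\alpha_j$ does the job.

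Item (2) is then immediate from (1): $\dim F_I=|\Pi\setminus\Pi_{\ov I}|=n-|\ov I|=|(\widehat\Pi\setminus\Pi_I)_0|-1=\text{\it rk}\;\widehat\Phi_0(\widehat\Pi\setminus\Pi_I)-1$. The case $I=\emptyset$ is handled separately, using irreducibility of $\Phi$ (so $\dim\mathcal P_\Phi=n$) and $\widehat\Phi_0(\widehat\Pi)=\widehat\Phi$, which has rank $n+1$.

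For (3), I decompose $\widehat\Phi(\widehat\Pi\setminus\Pi_I)$ into its irreducible components: the component through $\alpha_0$ is $\widehat\Phi_0(\widehat\Pi\setminus\Pi_I)$, while the others consist of roots not having $\alpha_0$ in their support, hence lie in $\Phi(\Pi\setminus\Pi_I)$. Since $\widehat\Phi_0(\widehat\Pi\setminus\Pi_I)\cap\Phi=\Phi(\Pi\setminus\Pi_{\ov I})$ (roots with support in $(\widehat\Pi\setminus\Pi_I)_0$ that do not use $\alpha_0$), subtracting yields $|\widehat\Phi(\widehat\Pi\setminus\Pi_I)|-|\Phi(\Pi\setminus\Pi_I)|=|\widehat\Phi_0(\widehat\Pi\setminus\Pi_I)\setminus\Phi|=2|S|=2|V_I|$. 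The second formula in (3) then follows because, by Proposition~\ref{intervallo}, the subdiagram on $\widehat\Pi\setminus\Pi_{\ov I}$ is connected, so $\widehat\Phi(\widehat\Pi\setminus\Pi_{\ov I})=\widehat\Phi_0(\widehat\Pi\setminus\Pi_I)$, while $\Phi(\Pi\setminus\Pi_{\ov I})=\widehat\Phi_0(\widehat\Pi\setminus\Pi_I)\cap\Phi$ as just noted.
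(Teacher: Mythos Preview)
Your proof is correct and follows essentially the same approach as the paper's. Both arguments use Lemma~\ref{affine} to pass to $S=\widehat\Phi_0^+(\widehat\Pi\setminus\Pi_I)\setminus\Phi$ and then exploit connectedness of $(\widehat\Pi\setminus\Pi_I)_0$ together with Proposition~\ref{standard}(1) to realize each simple root in $\Pi\setminus\Pi_{\ov I}$ as a difference of elements of $S$; the only cosmetic difference is that the paper builds a single saturated chain from $\alpha_0$ to the sum of all simple roots in $(\widehat\Pi\setminus\Pi_I)_0$, whereas you build a separate path to each $\alpha_j$, and for~(3) you spell out in more detail what the paper leaves as ``follows directly from Lemma~\ref{affine} and the definition of $\ov I$''.
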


\begin{proof}
By Lemma \ref{affine}, $E_{F_I}$ is cointained in the subspace generated by the
differences of the roots in 
$\widehat\Phi_0(\widehat \Pi\setminus\Pi_I)\setminus \Phi$. Set  
$\beta:=\sum_{\alpha\in (\widehat \Pi\setminus\Pi_I)_0}\alpha$. Since
$\widehat\Phi_0(\widehat \Pi\setminus\Pi_I)$ is irreducible, $\beta$  is a
root and there exists a chain  $\beta_0=\alpha_0, \beta_1, \dots,
\beta_s=\beta$  in $\widehat\Phi(\widehat \Pi\setminus \Pi_I)\setminus \Phi$,
such that $\{\beta_0, \beta_i-\beta_{i-1}\mid i=1, \dots, s\}=(\widehat
\Pi\setminus\Pi_I)_0$. 
This proves (1).
\par
Equation (2) is obvious for $I=\emptyset$, and follows by Equation (1) for
$I\neq
\emptyset$, since $\dim F_I=\dim E_{F_{I}}$. 
\par
Equation (3) follows directly from Lemma \ref{affine} and from the definition of $\ov I$.
\end{proof}
\smallskip

We recall that, for every nonempty $I \subseteq [n]$, $(V_I)_\ell$ and 
$(V_I)_s$ denote, respectively, the long and the short roots in the standard
parabolic face $F_I$.
\smallskip

\begin{lem} \label{VIell}
\label{transitivo}
The parabolic subgroup 
$W\la\Pi\setminus\Pi_{\ov I}\ra$ acts
transitively on $(V_I)_\ell$, and on $(V_I)_s$.
\end{lem}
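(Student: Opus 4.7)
The plan is to translate the statement via Lemma \ref{affine} into a question about the finite irreducible root system $\widehat{\Psi}:=\widehat{\Phi}(\widehat{\Pi}\setminus\Pi_{\ov I})$, whose set of simple roots is $\widehat{\Pi}\setminus\Pi_{\ov I}=\{\alpha_0\}\cup(\Pi\setminus\Pi_{\ov I})$. The map $\beta\mapsto -\beta+\delta$ gives a bijection between $V_I$ and $R:=\{\gamma\in\widehat{\Psi}^+\mid c_0(\gamma)=1\}$. This bijection preserves the bilinear form (since $\delta$ lies in its kernel) and is equivariant under $W\la\Pi\setminus\Pi_{\ov I}\ra$, which fixes $\delta$. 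Since $W\la\Pi\setminus\Pi_{\ov I}\ra$ coincides with the maximal parabolic subgroup of the Weyl group of $\widehat{\Psi}$ obtained by deleting $\alpha_0$, the claim reduces to showing that this parabolic acts transitively on the long, and on the short, roots in $R$.

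The key observation is the following: by Remark \ref{radice-minima}(2), the highest root of $\widehat{\Psi}$ is $-\eta_I+\delta$, whose coefficient on $\alpha_0$ equals $1$. Consequently $|c_0(\gamma)|\leq 1$ for every $\gamma\in\widehat{\Psi}$. In particular, if $c_0(\gamma)=1$ then $(\gamma,\alpha_0)\geq 0$, for otherwise $s_{\alpha_0}\gamma$ would have $c_0$-coordinate $c_0(\gamma)-\tfrac{2(\gamma,\alpha_0)}{(\alpha_0,\alpha_0)}>1$, a contradiction.

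Given $\gamma\in R$, I would apply the standard raising procedure: while $\gamma$ is not dominant with respect to $\Pi\setminus\Pi_{\ov I}$, pick $\alpha_j\in\Pi\setminus\Pi_{\ov I}$ with $(\gamma,\alpha_j)<0$ and replace $\gamma$ by $s_{\alpha_j}\gamma$. Each such move strictly increases the height in $\widehat{\Psi}$ while preserving both the length of $\gamma$ and the coordinate $c_0$ (the latter because $j\neq 0$). Since $R$ is finite, the process terminates at some $\gamma'\in R$ that is $W\la\Pi\setminus\Pi_{\ov I}\ra$-dominant. By the key observation, $(\gamma',\alpha_0)\geq 0$ as well, so $\gamma'$ is a dominant root of the whole $\widehat{\Psi}$.

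Because $\widehat{\Psi}$ is finite and irreducible, its only dominant positive roots are its highest root (long) and, when $\widehat{\Psi}$ has two root lengths, its highest short root. Thus every long (resp. short) root of $R$ is $W\la\Pi\setminus\Pi_{\ov I}\ra$-conjugate to the unique dominant long (resp. short) root of $\widehat{\Psi}$, which yields the two transitivities after translating back through the bijection. The main obstacle is the key observation and its use to upgrade $W\la\Pi\setminus\Pi_{\ov I}\ra$-dominance to full $\widehat{\Psi}$-dominance on elements of $R$; the rest is the routine raising procedure.
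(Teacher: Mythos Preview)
Your proof is correct, and the core mechanism --- a raising-to-dominant argument using simple reflections from $\Pi\setminus\Pi_{\ov I}$ --- is the same as the paper's. The difference is only in the ambient picture: the paper works directly in $V_I\subset\Phi^+$, raising any $\gamma\in(V_I)_\ell$ towards $\theta$ (and similarly $(V_I)_s$ towards $\theta_s$); its analogue of your ``key observation'' is that if $(\gamma,\alpha_j)<0$ for some $\alpha_j\in\Pi$, then necessarily $j\notin\ov I$, because $V_I=V_{\ov I}$ forces $c_j(\gamma)=m_j$ to be already maximal for $j\in\ov I$. Under the anti-isomorphism $\beta\mapsto -\beta+\delta$ of Remark~\ref{radice-minima}, this is precisely dual to your observation that $c_0(\gamma)=1$ forces $(\gamma,\alpha_0)\geq 0$. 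Your packaging has the pleasant feature of reducing everything to the standard fact that an irreducible finite root system has a unique dominant root of each length. One minor caveat: your setup requires $\widehat\Psi$ to be \emph{finite}, i.e.\ $\ov I\neq\emptyset$; the case $I=\emptyset$ (where the claim is just the classical transitivity of $W$ on each root length) should be handled separately.
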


\begin{proof}
Clearly, $\theta\in (V_I)_\ell$. We prove that any $\gamma\in(V_I)_\ell$ can 
be transformed into $\theta$ by some $w$ in $W\la\Pi\setminus\Pi_{\ov I}\ra$. By contradiction, let $\gamma$ be a counterexample of maximal height. Then there exists some $\alpha\in\Pi$ such that $(\gamma, \alpha)<0$ since $\theta$ is the unique long root in the closure of the fundamental Weyl chamber, and thus $s_\alpha(\gamma)=\gamma+c\alpha$ with $c>0$. 
By definition of $V_I$, we have that $\alpha\in\Pi\setminus\Pi_I$ and $\gamma+c\alpha\in V_I$. Indeed, by  Proposition \ref{intervallo}, $\alpha\in\Pi\setminus\Pi_{\ov I}$.
By the maximality of $\text{\it ht}(\gamma)$, there exists $w\in W\la\Pi\setminus\Pi_{\ov I}\ra$ such that  $\theta=w(\gamma+c\alpha)=w s_\alpha(\gamma)$: a contradiction, since  $w s_\alpha \in W\la\Pi\setminus\Pi_{\ov I}\ra$.
\par
Now assume $(V_I)_s\neq\emptyset$. Then the highest short root $\theta_s$ belongs to it, since $V_I$ is a dual order ideal.  Since 
$\theta_s$ is the unique dominant short root, we can prove that any other short root $\beta$ in $V_I$ can be transformed into $\theta_s$ by some $w$ in  $W\la\Pi\setminus\Pi_{\ov I}\ra$, with the same argument used for the long roots.
\end{proof}
\smallskip

In the next Proposition we see that 
while the maximum $\ov I$ yields the dimension and the number of roots in 
the standard parabolic face $F_I$ (Proposition \ref{chiusi}), the minimum
$\partial I$ yields the stabilizer of $F_I$. 

\begin{pro}
\label{stabilizzatore}
For each $I\subseteq [n]$, let 
$\text{\it Stab}_W F_I=\{w\in W\mid w (F_I)=F_I\}$. Then 
$$
\text{\it Stab}_W F_I=W\la\Pi\setminus \Pi_{\partial I}\ra =
W\la \Pi\setminus \Pi_{\ov I} \ra\times 
W\la\Pi_{\ov I}\setminus {\Pi_{\partial I}}\ra.$$
Moreover, 
$W\la\Pi_{\ov I}\setminus {\Pi_{\partial I}}\ra$ fixes the face $F_I$ pointwise, while the action of each $w$ in $W\la \Pi\setminus \Pi_{\ov I}\ra$ on $F_I$ is nontrivial, unless $w=e$, the identity of $W$. 
\end{pro}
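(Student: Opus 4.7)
The plan is to establish the two inclusions $K := W\la \Pi \setminus \Pi_{\partial I} \ra \subseteq H$ and $H \subseteq K$, where $H := \text{\it Stab}_W F_I$; the direct product decomposition and the pointwise-fixing statement will fall out of the orthogonality uncovered in the first inclusion, and the faithfulness assertion is a short dimension count.

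For $K \subseteq H$, I would split the simple generators of $K$ into two orthogonal families. If $\alpha_k \in \Pi \setminus \Pi_{\ov I}$, then $(\breve\omega_j, \alpha_k) = 0$ for every $j \in \ov I$, so $s_{\alpha_k}$ preserves each defining affine hyperplane of $F_I$ and hence preserves $V_I$. If $\alpha_k \in \Pi_{\ov I} \setminus \Pi_{\partial I}$, the definition of $\partial I$ forces $\alpha_k$ orthogonal to every root in $(\widehat \Pi \setminus \Pi_I)_0$; in particular $\alpha_k \perp \theta$ (as $\alpha_0 \in (\widehat \Pi \setminus \Pi_I)_0$) and $\alpha_k \perp E_{F_I} = \gen_\real(\Pi \setminus \Pi_{\ov I})$ by Proposition~\ref{chiusi}, so $s_{\alpha_k}$ fixes the affine hull $\theta + E_{F_I}$, and in particular $F_I$, pointwise. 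The same orthogonality gives $\gen_\real(\Pi_{\ov I} \setminus \Pi_{\partial I}) \perp \gen_\real(\Pi \setminus \Pi_{\ov I})$, so $W\la \Pi \setminus \Pi_{\partial I}\ra$ is the internal direct product of its two subfactors and $W\la \Pi_{\ov I} \setminus \Pi_{\partial I}\ra$ fixes $F_I$ pointwise.

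For the reverse inclusion, I would work with the vertex barycenter $b := \frac{1}{|(V_I)_\ell|}\sum_{\alpha \in (V_I)_\ell}\alpha$. Any $w \in H$ permutes $(V_I)_\ell$ and thus fixes $b$, whence $H \subseteq \text{\it Stab}_W(b)$. It therefore suffices to show that $b$ is dominant and satisfies $(b, \alpha_i) = 0 \iff i \notin \partial I$, since the standard fact that the stabilizer in $W$ of a dominant vector is the standard parabolic generated by the simple reflections orthogonal to it will then yield $\text{\it Stab}_W(b) = W\la \Pi \setminus \Pi_{\partial I}\ra = K$. By Lemma~\ref{VIell}, $W\la \Pi \setminus \Pi_{\ov I}\ra$ fixes $b$, so $(b, \alpha_i) = 0$ for $i \notin \ov I$; for $i \in \ov I \setminus \partial I$ the orthogonality from the first step already gives $(\beta, \alpha_i) = 0$ for every $\beta \in V_I$. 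For $i \in \partial I$, each vertex can be written $\beta = \theta - \sum_{j \notin \ov I} d_j^{(\beta)} \alpha_j$ with $d_j^{(\beta)} \geq 0$, and one computes $(\beta, \alpha_i) = (\theta, \alpha_i) - \sum_j d_j^{(\beta)}(\alpha_j, \alpha_i) \geq 0$, both summands being non-negative since $\theta$ is dominant and $(\alpha_j, \alpha_i) \leq 0$ for distinct simple roots; strict positivity at a single vertex suffices and follows by choosing $\beta = \theta$ when $\alpha_i$ is adjacent to $\alpha_0$ in the extended Dynkin diagram, and $\beta = \eta_I$ otherwise, using that $\eta_I \in (V_I)_\ell$ by Proposition~\ref{laminimaelunga} and that $d_j^{(\eta_I)} > 0$ for every $\alpha_j \in \Pi \setminus \Pi_{\ov I}$ because the highest root of the irreducible subsystem $\widehat\Phi(\widehat\Pi \setminus \Pi_{\ov I})$ has full support.

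The faithfulness claim is immediate: if $w \in W\la \Pi \setminus \Pi_{\ov I}\ra$ fixes $F_I$ pointwise, then $w$ fixes $\theta$ together with every difference $\beta - \theta$, $\beta \in F_I$; as these span $E_{F_I} = \gen_\real(\Pi \setminus \Pi_{\ov I})$ and $W\la \Pi \setminus \Pi_{\ov I}\ra$ acts faithfully on this space, $w = e$. The chief obstacle is the strict positivity of $(b, \alpha_i)$ for $\alpha_i \in \partial I$ with $\alpha_i \perp \theta$: handling this case requires descending to the minimum vertex $\eta_I$ and invoking the full-support property of the highest root of the affine parabolic.
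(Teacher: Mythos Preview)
Your proof is correct but takes a genuinely different route for the reverse inclusion $H\subseteq K$. The paper argues by contradiction: a minimal-length $w\in H\setminus K$ must have a right descent $\alpha=\alpha_i$ with $i\in\partial I$; since $(\eta_I,\alpha)>0$, the difference $\eta_I-\alpha$ is a root not in $F_I$, yet $w(\eta_I-\alpha)=w(\eta_I)-w(\alpha)$ is a positive root dominating $w(\eta_I)\in V_I$ and hence lands in $F_I$, a contradiction. You instead show that the barycenter $b$ of $(V_I)_\ell$ is dominant with $(b,\alpha_i)=0$ exactly when $i\notin\partial I$, and invoke the standard description of the stabilizer of a point in the closed fundamental chamber. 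Your approach is more geometric and, as a byproduct, sharpens Proposition~\ref{baryst} (there stated for the barycenter of all of $V_I$, and without strict positivity); the paper's minimal-counterexample argument stays purely within Coxeter combinatorics and avoids your case split on whether $\alpha_i$ is adjacent to $\alpha_0$. Both ultimately rest on the same key fact, that $(\eta_I,\alpha_i)>0$ for $i\in\partial I$, which the paper cites from the proof of Proposition~\ref{laminimaelunga} and which you recover via the full-support property of the highest root of $\widehat\Phi(\widehat\Pi\setminus\Pi_{\ov I})$. For the easy inclusion $K\subseteq H$ the paper is terser, simply noting that $W\la\Pi\setminus\Pi_J\ra$ stabilizes $F_J=F_I$ for every $J$ in the interval $[\partial I,\ov I]$ of Proposition~\ref{intervallo}; your direct verification of the generators is of course equivalent.
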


\begin{proof}
It is clear that $\text{\it Stab}_W F_I=\text{\it Stab}_W V_I=
\{w\in W\mid w (V_I)=V_I\},$ therefore  
it is immediate that $W\la\Pi\setminus \Pi_I\ra\subseteq\text{\it Stab}_W F_I$. 
This should happen for all $J$ such that $F_J=F_I$,  in particular for $\partial I$, 
therefore $W\la\Pi\setminus \Pi_{\partial I}\ra \subseteq \text{\it Stab}_W F_I$. 
\par 
Now, assume by contradiction that 
$\text{\it Stab}_W F_I\setminus W\la\Pi\setminus \Pi_{\partial I}\ra\neq\emptyset$ and let $w$ be an element of minimal length in 
$\text{\it Stab}_W F_I\setminus W\la\Pi \setminus \Pi_{\partial I}\ra$. Then  
$D_r(w)\subseteq \partial I$, therefore there 
exists $i \in \partial I$ such that $w(\alpha_i)\in-\Phi^+$. 
Let $\eta=\min V_I$ and $\alpha_i=\alpha$ for short. Then, 
by the proof of Proposition \ref{laminimaelunga} and by definition of $\partial I$, $(\eta, \alpha)>0$. 
It follows, by Proposition \ref{standard}, that $\eta-\alpha\in \Phi\cup\{0\}$: 
on the other hand, $\eta \neq \alpha$ since $\eta \in F_I$ while $\alpha \notin
F_I$, 
because $w(\alpha)\notin F_I$ 
and $w$ stabilizes $F_I$.
By assumption, $w(\eta)\in V_I$, in particular,  $w(\eta)\in
\Phi^+$, therefore
$w(\eta-\alpha)=w(\eta)-w(\alpha)$ is a root which is a sum of positive roots, 
one of which is in $F_I$. 
Hence $w(\eta-\alpha) \in F_I$, 
which is a contradiction since $\eta-\alpha \notin F_I$.
\par
Since the diagram of $\Pi\setminus \Pi_{\partial I}$ is the disjoint union of
the diagrams of  
$\Pi\setminus \Pi_{\ov I}$ and $\Pi_{\ov I}\setminus \Pi_{\partial I}$,  $W\la\Pi\setminus \Pi_{\partial I}\ra$ is the direct product
$W\la \Pi\setminus \Pi_{\ov I} \ra\times 
W\la\Pi_{\ov I}\setminus {\Pi_{\partial I}}\ra.$
By Lemma \ref{affine}, all elements in $V_I$ are orthogonal to all roots in 
$\Pi_{\ov I}\setminus {\Pi_{\partial I}}$, therefore $W\la\Pi_{\ov I}\setminus {\Pi_{\partial I}}\ra$ fixes $F_I$ pointwise.
Finally, each $w\in W\la \Pi\setminus \Pi_{\ov I}\ra$ is nontrivial on $F_I$,
unless $w=e$,  since $W\la \Pi\setminus \Pi_{\ov I}\ra$ 
acts faithfully on $\gen_\real(\Pi\setminus \Pi_{\ov I})=E_{F_I}$.
\end{proof}
\smallskip

\begin{rem}\label{cortesuperflue}
It is easy to see that the root polytope $\mc P_\Phi$ is indeed the convex hull
of the long roots. 
In fact, if $\Phi$ is not simply laced, since $\Phi$ is irreducible, any short
root is contained in a rank 2 non-simply laced subsystem.  And it is immediate to
check that in such a subsystem a short root can be obtained as a convex linear
combination of two long roots.  
\par
In particular we have that:
$$
\mathcal P_{B_3}=\mathcal P_{A_3},\quad 
\mathcal P_{B_n}=\mathcal P_{D_n}\text{  for $n \geq 4$},\quad 
\mathcal P_{F_4}=\mathcal P_{D_4},\quad
\text{and}\quad 
\mathcal P_{G_2}=\mathcal P_{A_2}.
$$
We explicitly notice that $\mathcal P_{C_n}$ is the cross-polytope for all $n
\geq 2$ (octahedron for $n=3$), and that $\mathcal P_{A_n}$ and $\mathcal
P_{B_n}=\mathcal P_{D_n}$, for $n \geq 4$, are distinct $n$-dimensional
generalizations of the cuboctahedron $\mathcal P_{A_3}=\mathcal P_{B_3}$ (see
\cite{C}).
\end{rem}
\smallskip

Thus, the vertices of $\mc P_\Phi$ are long roots, and are clearly all the long roots, since $W$ acts on $\mc P_\Phi$ and is transitive on the long roots.
\par
In particular, the set of vertices in the standard parabolic face $F_I$ is the subset of long roots of $V_I$.  By Lemma \ref{transitivo}, this set is the orbit of $\theta$ under the action of $W \la\Pi\setminus \Pi_{\ov I} \ra$.
Since $\theta$ is dominant, its stabilizer in $W \la\Pi\setminus \Pi_{\ov I} \ra$ is the parabolic subgroup  generated by the simple reflections through the roots in
$(\Pi\setminus \Pi_{\ov I})\cap \theta^\perp$. Therefore,  we obtain the following corollary. 
\smallskip

\begin{cor}\label{numerovertici}
The number of vertices of the standard parabolic face $F_I$ is 
$[W\la\Pi\setminus \Pi_{\ov I}\ra : W\la \(\Pi\setminus\Pi_{\ov I}\)\cap
\theta^{\perp}\ra]$.
\end{cor}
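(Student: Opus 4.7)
The plan is to combine three ingredients already established in the paper: (i) the vertex set of $F_I$ coincides with the set $(V_I)_\ell$ of long roots in $V_I$; (ii) $W\la\Pi\setminus\Pi_{\ov I}\ra$ acts transitively on $(V_I)_\ell$; and (iii) the stabilizer of the dominant root $\theta$ inside $W\la\Pi\setminus\Pi_{\ov I}\ra$ is the standard parabolic subgroup associated with the simple roots of $\Pi\setminus\Pi_{\ov I}$ orthogonal to $\theta$. The result will then follow by the orbit--stabilizer formula.

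First I would justify (i). Since $\mc P_\Phi$ equals the convex hull of the long roots (Remark \ref{cortesuperflue}) and $W$ acts transitively on long roots, every long root is a vertex of $\mc P_\Phi$, so the vertices lying on the face $F_I$ are exactly the long roots in $F_I\cap\Phi=V_I$, i.e.\ $(V_I)_\ell$. Note that $\theta\in(V_I)_\ell$ by Proposition \ref{laminimaelunga}, so this set is nonempty.

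Next, (ii) is precisely Lemma \ref{transitivo}, which gives $(V_I)_\ell=W\la\Pi\setminus\Pi_{\ov I}\ra\cdot\theta$. For (iii), I would invoke the standard fact that the isotropy subgroup in a Weyl group of a dominant element is the parabolic subgroup generated by the simple reflections orthogonal to it. Applied to $\theta$, which is dominant, inside $W\la\Pi\setminus\Pi_{\ov I}\ra$, the stabilizer is therefore $W\la(\Pi\setminus\Pi_{\ov I})\cap\theta^\perp\ra$.

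Finally, orbit--stabilizer yields
\[
|(V_I)_\ell|=\big[W\la\Pi\setminus\Pi_{\ov I}\ra:W\la(\Pi\setminus\Pi_{\ov I})\cap\theta^\perp\ra\big],
\]
which is the desired count. No step is genuinely an obstacle here: the corollary really is a direct consequence of Lemma \ref{transitivo} together with the classical description of stabilizers of dominant weights, and the only thing to check carefully is that the statement about $\theta$ being dominant is used for the \emph{sub}group $W\la\Pi\setminus\Pi_{\ov I}\ra$ (which is automatic since $\theta$ is dominant with respect to the full $\Pi$, hence with respect to any subset).
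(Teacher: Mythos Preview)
Your proposal is correct and follows essentially the same approach as the paper: the paragraph immediately preceding the corollary identifies the vertex set of $F_I$ with $(V_I)_\ell$ via Remark \ref{cortesuperflue}, notes that this is the $W\la\Pi\setminus\Pi_{\ov I}\ra$-orbit of $\theta$ by Lemma \ref{transitivo}, and computes the stabilizer of the dominant root $\theta$ as $W\la(\Pi\setminus\Pi_{\ov I})\cap\theta^\perp\ra$, exactly as you do.
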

\smallskip

 Clearly, $\(\Pi\setminus \Pi_{\ov I}\)\cap\theta^\perp$ is the subset of  
the roots in $\Pi\setminus \Pi_{\ov I}$ that are not adjacent to $\alpha_0$ in
the extended Dynkin diagram.  
\smallskip

The following proposition, based on Lemma \ref{transitivo},  is a characterization of the roots of type $\min V_I$, for some nonempty $I\subseteq [n]$.

\begin{pro}
\label{carat-min}
Let $\eta\in \Phi^+$. Then
$\eta=\min V_{I(\eta)}$ if and only  $\eta$ is long and $(\eta, \alpha_i)\leq 0$ for all $i\in [n]\setminus{I(\eta)}$. 
\par
In particular, the set of the standard parabolic faces of $\mathcal P_\Phi$ is in bijection with the subset
$\{\eta\in \Phi_\ell^+\mid (\eta, \alpha_i)\leq 0, \text{ for all } i\in  [n]\setminus I(\eta)\}$ of $\Phi^+$.
\end{pro}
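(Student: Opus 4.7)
The plan is to prove the biconditional directly and then read off the bijection from Remark~\ref{radice-minima}(2). The forward direction should go by contradiction, producing an element strictly smaller than $\eta$ in $V_{I(\eta)}$. The backward direction will combine the transitivity of the parabolic action (Lemma~\ref{transitivo}) with the classical fact that an antidominant element is the minimum of its Weyl-group orbit in the root-lattice order.

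For the forward direction, I would assume $\eta = \min V_{I(\eta)}$. That $\eta$ is long is immediate from Proposition~\ref{laminimaelunga}(1). To obtain the inequality, suppose for contradiction that $(\eta, \alpha_i) > 0$ for some $i \in [n]\setminus I(\eta)$. The degenerate case $\eta = \alpha_i$ is quickly dismissed: it would force $I(\eta) = \{i\}$ if $m_i = 1$, or $I(\eta) = \emptyset$ if $m_i > 1$, both incompatible with the hypotheses. So $\eta \neq \alpha_i$, and applying Proposition~\ref{standard2} to the $\alpha_i$-string through $\eta$ gives $\eta - \alpha_i \in \Phi$. Since $(\alpha_j, \alpha_i) \le 0$ for all $j \neq i$, the assumption $(\eta,\alpha_i) > 0$ forces $c_i(\eta) \ge 1$, hence $\eta - \alpha_i \in \Phi^+$. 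Because $i \notin I(\eta)$, one has $c_j(\eta - \alpha_i) = c_j(\eta) = m_j$ for every $j \in I(\eta)$, so $\eta - \alpha_i \in V_{I(\eta)}$; but $\eta - \alpha_i < \eta$ in the root poset, contradicting the minimality of $\eta$.

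For the backward direction, I would assume $\eta$ is long and $(\eta,\alpha_i) \le 0$ for every $i \in [n] \setminus I(\eta)$; note this forces $I(\eta) \neq \emptyset$, else $\eta$ would be an antidominant positive root, impossible as $(\eta,\eta) \le 0$ would follow. Let $\eta' := \min V_{I(\eta)}$, which exists and is long by Proposition~\ref{laminimaelunga}(1). Since $\eta$ and $\eta'$ both lie in $(V_{I(\eta)})_\ell$, Lemma~\ref{transitivo} provides $w \in W\la \Pi \setminus \Pi_{\ov{I(\eta)}}\ra$ with $w(\eta) = \eta'$. Because $I(\eta) \subseteq \ov{I(\eta)}$, the hypothesis makes $-\eta$ dominant for the parabolic subsystem generated by $\Pi\setminus\Pi_{\ov{I(\eta)}}$; applying the standard fact $\mu - w(\mu) \geq 0$ for dominant $\mu$ (to $\mu = -\eta$) yields $\eta' - \eta = w(\eta) - \eta \ge 0$, i.e., $\eta \le \eta'$. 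Combined with $\eta' \le \eta$ from the definition of minimum, this gives $\eta = \eta'$. The ``in particular'' clause then follows: Remark~\ref{radice-minima}(2) ensures $I(\min V_I) = \ov I$, so $F_I \mapsto \min V_I$ is an injection from standard parabolic faces into $\Phi^+_\ell$, and the biconditional just proved identifies its image with the displayed subset. The main technical subtlety, in my view, is verifying that $\eta - \alpha_i$ is actually \emph{positive} (not merely a root) and cleanly importing the antidominance lemma; the rest is bookkeeping with $I(\eta) \subseteq \ov{I(\eta)}$ and $c_j(\alpha_i) = \delta_{ij}$.
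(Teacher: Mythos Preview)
Your proof is correct. The backward direction is essentially the paper's argument: both combine Lemma~\ref{transitivo} (transitivity on $(V_{I(\eta)})_\ell$) with the fact that an antidominant element is minimal in its parabolic orbit; the paper proves the inequality $w(\eta)\ge\eta$ by a short induction on $\ell(w)$, whereas you invoke it as a standard fact about dominant weights.

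The forward direction is where you diverge. The paper argues via the affine picture: by Lemma~\ref{affine} and Proposition~\ref{laminimaelunga}, $-\eta+\delta$ is the highest root of $\widehat\Phi_0(\widehat\Pi\setminus\Pi_{I(\eta)})$, hence dominant there, which immediately gives $(\eta,\alpha_i)\le 0$ for $\alpha_i\in\Pi\setminus\Pi_{I(\eta)}$. Your contradiction argument---producing $\eta-\alpha_i\in V_{I(\eta)}$ whenever $(\eta,\alpha_i)>0$---is more elementary and self-contained, avoiding any appeal to the affine root system; the paper's route is shorter given the machinery already in place, but yours would work equally well in a setting where Lemma~\ref{affine} was not available. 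Your handling of the ``in particular'' clause via Remark~\ref{radice-minima}(2) is also slightly more explicit than the paper's, which leaves that step to the reader.
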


\begin{proof}
First, we assume that  $\eta=\eta_{I(\eta)}$.  Then $\eta\in \Phi_\ell$ by Proposition \ref{principale}. By Lemma \ref{affine} and the proof of  Proposition \ref{principale}, $-\eta+\delta$ is the highest root of $\Phi\(\{\alpha_0\}\cup  \(\Pi\setminus \Pi_{I}\)\)$, in particular $-\eta+\delta$ is dominant for this root system, which implies that $(\eta, \alpha_i)\leq 0$ for all $i\in \Pi\setminus \Pi_{I(\eta)}$. 
\par
Next, we assume $(\eta, \alpha_i)\leq 0$ for all $i\in [n]\setminus{I(\eta)}$ and that $\eta\in \Phi_\ell$.
The first condition implies that $w(\eta)\geq \eta$  for all $w\in W\la \Pi\setminus \Pi_{{I(\eta)}}\ra$: this is easily seen by induction on the length of $w$.
Indeed, let $w= s_{\alpha_{i_1}} \ldots s_{\alpha_{i_k}}$ be a reduced expression. Then $w(\eta)=  s_{\alpha_{i_1}} \ldots s_{\alpha_{i_{k-1}}} (\eta + c \alpha_{i_k})$, with $c \geq 0$. By induction, $s_{\alpha_{i_1}} \ldots s_{\alpha_{i_{k-1}}} (\eta) \geq \eta$ and, by the properties of reduced expressions, $s_{\alpha_{i_1}} \ldots s_{\alpha_{i_{k-1}}} (\alpha_{i_k}) > 0$. The claim follows. 
 Hence, $\eta$ is a minimal element in its orbit under the action of  $W\la \Pi\setminus \Pi_{{I(\eta)}}\ra$. 
Since $\eta$ is long, by  Lemma \ref{transitivo}, this orbit is equal to $(V_{I(\eta)})_\ell$, and by Proposition \ref{principale} $\min V_{I(\eta)}=\min (V_{I(\eta)})_\ell$. 
 It follows that $\eta=\min V_{I(\eta)}$.  
\end{proof}

\section{Coordinate faces}

Recall that we call coordinate faces the standard parabolic faces of type
$F_i$,
$i\in [n]$. 
It is clear, from the definition, that the standard parabolic facets of
$\mc P_\Phi$  must be of this type, but not all coordinate faces are
facets. Indeed, two coordinate faces can be one included in the other: the
resulting partial order structure on the set of coordinate faces (or,
equivalently by Proposition \ref{faccecoordinatedistinte}, on $[n]$) 
has a simple uniform description in terms of the Dynkin diagram (see Remark
\ref{ordine}).
\par
By the results in the previous section, we know that, for all $I\subseteq
[n]$, the standard parabolic face $F_I$ coincides with $F_{\partial I}$:
$\partial I$ is a small subset of $[n]$, being
the set of roots adjacent and exterior to some irreducible subdiagram of the
extended Dynkin diagram. In fact, it is easy to check that $\partial I$ has at
most $3$ elements (more precisely, exactly $3$ elements only in a special case
occurring in type $D$ and at most $2$ elements otherwise). This means
that every standard parabolic face $F_I$ can be obtained as the intersection of
$3$ or (almost always) fewer
coordinate faces.  
We dedicate this section to a deeper analysis of the coordinate
faces.

\begin{pro}  
 \label{faccecoordinatedistinte}
All coordinate faces are distinct.
\end{pro}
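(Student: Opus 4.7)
The plan is to derive a contradiction from the interval description of Proposition \ref{intervallo}. Suppose, for the sake of contradiction, that $F_i = F_j$ for some $i, j \in [n]$ with $i \neq j$. Applying Proposition \ref{intervallo} with $I = \{i\}$, the subsets $J \subseteq [n]$ for which $F_J = F_i$ are exactly those lying in the interval $[\partial \{i\}, \ov{\{i\}}]$ in the Boolean algebra of subsets of $[n]$. Both $\{i\}$ and $\{j\}$ belong to this interval, so in particular $\partial \{i\} \subseteq \{i\}$ and $\partial \{i\} \subseteq \{j\}$. Since $i \neq j$, this forces $\partial \{i\} \subseteq \{i\} \cap \{j\} = \emptyset$.

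The next step is to observe that $\partial \{i\} = \emptyset$ is impossible. By the definition of $\partial$, the statement $i \in \partial \{i\}$ is equivalent to $\alpha_i$ being adjacent in the extended Dynkin diagram $\Gamma$ to some vertex of the connected component $(\widehat\Pi \setminus \{\alpha_i\})_0$ of $\alpha_0$. Since $\Phi$ is irreducible, the extended Dynkin diagram $\Gamma$ is connected; removing the single vertex $\alpha_i$ from a connected graph produces connected components each of which must contain at least one neighbor of $\alpha_i$ (else that component would already be a connected component of $\Gamma$ itself, contradicting connectedness of $\Gamma$). In particular, $(\widehat\Pi \setminus \{\alpha_i\})_0$ contains a neighbor of $\alpha_i$, so $i \in \partial \{i\}$ and $\partial \{i\} \neq \emptyset$, the desired contradiction.

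There is essentially no real obstacle; the statement is a direct corollary of Proposition \ref{intervallo} once one notes that the border of a singleton $\{i\}$ is always $\{i\}$ itself, a consequence of the connectedness of the extended Dynkin diagram. Alternatively one could argue that $\partial \{i\} = \emptyset$ would put $\emptyset$ in the interval and hence give $F_i = F_\emptyset = \mc P_\Phi$, contradicting the fact that $F_i$ is a proper face by Definition 3.1; both variants reduce the proof to a one-line application of the interval description.
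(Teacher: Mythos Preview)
Your proof is correct and rests on the same underlying fact as the paper's: by the affine correspondence, $F_i$ is determined by the connected component of $\alpha_0$ in $\widehat\Pi\setminus\{\alpha_i\}$, and connectedness of the extended Dynkin diagram forces these components to differ for distinct $i$. The only difference is packaging: the paper invokes Lemma~\ref{affine} directly to compare the two components, whereas you route the argument through Proposition~\ref{intervallo} and the observation $\partial\{i\}=\{i\}$; both are one-line reductions to the same graph-theoretic point.
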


\begin{proof}
The proof follows by Lemma \ref{affine} since, for all $k,h \in [n]$, the connected components of $\alpha_0$ in the Dynkin diagrams of $\widehat \Pi \setminus \{\alpha_k\}$ and  $\widehat \Pi \setminus \{\alpha_h\}$ cannot coincide if $k\neq h$. 
\end{proof}
\smallskip

Proposition \ref{faccecoordinatedistinte} follows also by the following
argument. Let $\eta_i$ be the minimal root in the face $F_i$, whose existence
is established by Proposition \ref{principale}. Then the only simple root that
can have positive scalar product with $\eta_i$ is $\alpha_i$. Indeed, if
$\alpha_j$, $j\neq i$, were a simple root having positive scalar product with
$\eta_i$, then $\eta_i - \alpha_j$ would be a root in $F_i$ by Proposition
\ref{standard}, which is impossible since $\eta_i$ is the minimum. 
On the other hand, $\eta_i$ is a positive root and hence cannot have
non-positive scalar product with all simple roots. Thus $\eta_h \neq \eta_k$ if
$h \neq k$, and the faces $F_h$ and $F_k$ cannot coincide.
\par
Another property of the coordinate face $F_i$, implying Proposition \ref{faccecoordinatedistinte}, is that the barycenter of $V_i$
is parallel to the $i$-th fundamental coweight $\breve{\omega}_i$.
This follows by the following general lemma, which states that every 
$\alpha$-string is centered on a vector orthogonal to $\alpha$.

\begin{lem}
\label{centrato}
 Let $\alpha$ and  $\beta$ be non-proportional roots in $\Phi$, and let 
$\alpha$-$(\beta)$ be the $\alpha$-string through $\beta$.
Set $\mu := \sum_{\gamma\in \textrm{$\alpha$-$(\beta)$}} \gamma$. 
Then
$$(\mu, \alpha)=0.$$
\end{lem}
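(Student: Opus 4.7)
The plan is to exploit the symmetry of the $\alpha$-string under the reflection $s_\alpha$. By Proposition \ref{standard2}, the $\alpha$-string through $\beta$ has the form $\{\beta + j \alpha \mid -q \le j \le p\}$ for certain nonnegative integers $p, q$. I will first verify that $s_\alpha$ stabilizes this string setwise. Indeed, for any $j$,
$$
s_\alpha(\beta + j\alpha) \;=\; \beta - \tfrac{2(\beta,\alpha)}{(\alpha,\alpha)}\alpha - j\alpha,
$$
and this element lies in $\alpha$-$(\beta)$ because $s_\alpha$ permutes the roots of $\Phi$ and preserves the set of vectors of the form $\beta + k\alpha$. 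A brief argument with the endpoints (matching $s_\alpha(\beta + p\alpha)$ with $\beta - q\alpha$) then shows that $s_\alpha$ is in fact the order-reversing involution of the string.

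Since $s_\alpha$ permutes the summands of $\mu$, it fixes $\mu$. But the fixed-point set of $s_\alpha$ acting on $\gen_\real\Phi$ is precisely the hyperplane $\alpha^\perp$, so $\mu \in \alpha^\perp$, which gives $(\mu,\alpha)=0$.

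There is no real obstacle here; the only thing to be careful about is the justification that $s_\alpha$ maps the string into itself, which in turn rests on the standard identity $\frac{2(\beta,\alpha)}{(\alpha,\alpha)} = q - p$ extracted from Proposition \ref{standard2}. As a cross-check, one can alternatively compute directly:
$$
(\mu,\alpha) \;=\; (p+q+1)(\beta,\alpha) + \tfrac{(p-q)(p+q+1)}{2}(\alpha,\alpha),
$$
and the identity $\frac{2(\beta,\alpha)}{(\alpha,\alpha)} = q-p$ makes the right-hand side vanish, confirming the symmetry argument.
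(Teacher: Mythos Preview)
Your argument is correct and rests on the same underlying symmetry as the paper's proof: the $\alpha$-string is invariant under $s_\alpha$, equivalently it is centered at a point of $\alpha^\perp$. The paper simply normalizes $\beta$ to be the origin of the string and observes directly that the middle vector $\beta-\tfrac{(\beta,\alpha)}{(\alpha,\alpha)}\alpha$ is orthogonal to $\alpha$, which is a slightly more compact way of saying what you prove via the reflection.
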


\begin{proof}
We may suppose that $\beta$ is the origin of its $\alpha$-string. Then, by
Proposition \ref{standard2}, 
$\alpha$-$(\beta) =\left\{ \beta + j \alpha \mid j=0,1, \ldots, -
\frac{2(\beta, \alpha)}{(\alpha, \alpha)} \right\}$.
The middle vector $\beta -  \frac{(\beta, \alpha)}{(\alpha, \alpha)} \alpha $ is
orthogonal to $\alpha$. 
\end{proof}
\smallskip

\begin{pro}
\label{bari-coor}
The barycenter of the roots in the $i$-th coordinate face $F_i$ is parallel to the $i$-th fundamental coweight: 
$$\sum_{\alpha \in V_i} \alpha = \frac{m_i |V_i|}{(\breve{\omega}_i,\breve{\omega}_i)}\,  \breve{\omega}_i. $$
\end{pro}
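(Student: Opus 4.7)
The plan is to exploit Lemma \ref{centrato} by partitioning $V_i$ into complete $\alpha_j$-strings for each $j\neq i$. First observe that, because $\breve\omega_i$ is dual to $\alpha_i$, we have $(\breve\omega_i,\alpha_j)=0$ for every $j\neq i$. Hence if $\beta\in V_i$ and $j\neq i$, every element of the $\alpha_j$-string $\alpha_j$-$(\beta)=\{\beta+k\alpha_j\}$ satisfies $(\breve\omega_i,\beta+k\alpha_j)=m_i$, so the whole string lies in $V_i$. One checks trivially that $\alpha_j$ and $\beta$ are non-proportional: the roots in $V_i$ are positive (since $c_i=m_i>0$), so $\beta\neq\pm\alpha_j$ when $j\neq i$. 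Consequently, for each fixed $j\neq i$, the set $V_i$ is a disjoint union of complete $\alpha_j$-strings of $\Phi$.

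Next, I would apply Lemma \ref{centrato} to each such string: its sum is orthogonal to $\alpha_j$. Summing over all strings in the partition gives
$$\Big(\sum_{\alpha\in V_i}\alpha,\ \alpha_j\Big)=0\qquad\text{for all } j\neq i.$$
Since $\{\alpha_j\mid j\neq i\}$ spans the hyperplane $\alpha_i^\perp\!+\!\cdots$ complementary to $\real\breve\omega_i$, this forces $\sum_{\alpha\in V_i}\alpha$ to be a scalar multiple of $\breve\omega_i$.

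Finally, the scalar is pinned down by pairing with $\breve\omega_i$ itself: since $(\breve\omega_i,\alpha)=m_i$ for every $\alpha\in V_i$, we get
$$\Big(\breve\omega_i,\sum_{\alpha\in V_i}\alpha\Big)=m_i|V_i|,$$
and comparing with $(\breve\omega_i,c\,\breve\omega_i)=c(\breve\omega_i,\breve\omega_i)$ yields $c=m_i|V_i|/(\breve\omega_i,\breve\omega_i)$, which is exactly the claimed formula.

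There is no real obstacle: the only point requiring a moment's care is verifying that the partition into $\alpha_j$-strings is well-defined (i.e.\ each string lies entirely in $V_i$ and $\alpha_j$-strings are either disjoint or equal), which is immediate from $(\breve\omega_i,\alpha_j)=0$ and the standard structure of root strings (Proposition \ref{standard2}). The rest is a one-line application of Lemma \ref{centrato}.
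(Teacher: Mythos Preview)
Your proof is correct and follows essentially the same route as the paper: both argue that $V_i$ is a union of complete $\alpha_j$-strings for each $j\neq i$, invoke Lemma~\ref{centrato} to get orthogonality of the sum to every $\alpha_j$ with $j\neq i$, conclude that the sum is a multiple of $\breve\omega_i$, and then determine the scalar by pairing with $\breve\omega_i$. You supply a few extra details (non-proportionality, disjointness of strings) that the paper leaves implicit, but the argument is the same.
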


\begin{proof}
 By the definition of $V_i$, if $\alpha \in V_i$ and $\alpha \pm \alpha_j \in
\Phi$ for a certain $j \neq i$, 
then $\alpha \pm \alpha_j \in V_i$.
Hence $V_i$ is a union of $\alpha_j$-string, for all $j \neq i$. By Lemma
\ref{centrato},   $( \sum_{\alpha \in V_i} \alpha, \alpha_j) = 0$, 
for all $j \neq i$. Hence   $\sum_{\alpha \in V_i} \alpha$ is a multiple of
$\breve{\omega}_i$. 
Since $(\alpha, \breve{\omega}_i) = m_i$ for all 
$\alpha \in V_i$, we get the assertion.
\end{proof}
\smallskip

Notice that, in the proof of Proposition \ref{bari-coor}, we use the fact that the set
of roots $V_i$ in the coordinate face $F_i$ is union of $\alpha_j$-string, for
all $j \neq i$. Consequently,  the set of roots $V_I$ in the 
standard parabolic
face $F_I$ is a union
of $\alpha_j$-string, for all $j\notin I$. We obtain the following result on the
barycenter of the roots in a standard parabolic face.

\begin{pro}
\label{baryst}
Let $I\subseteq [n]$. The barycenter of the set of roots in the standard
parabolic face $F_I$ is in 
the cone generated by the coweights $\breve{\omega}_i$, $i\in \partial I$. 
\end{pro}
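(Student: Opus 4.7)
The plan has two stages: first show that $\mu := \sum_{\alpha \in V_I} \alpha$ lies in $\gen_\real\{\breve\omega_i : i \in \partial I\}$, and then upgrade this to membership in the non-negative cone over these coweights. For the first stage, since $F_I = F_{\partial I}$ gives $V_I = V_{\partial I}$ (Proposition \ref{intervallo}), the observation immediately preceding this proposition---that $V_J$ is a union of $\alpha_j$-strings for every $j \notin J$---applied to $J = \partial I$ yields that $V_I$ is a union of $\alpha_j$-strings for every $j \notin \partial I$. By Lemma \ref{centrato}, the sum over each such string is orthogonal to $\alpha_j$, so $(\mu, \alpha_j) = 0$ for all $j \notin \partial I$. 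Since $(\alpha_j, \breve\omega_i) = \delta_{ij}$, this places $\mu$ in $\gen_\real\{\breve\omega_i : i \in \partial I\}$.

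For the second stage I would reduce the direction of $\mu$ to the single vector $\theta$. Given $\alpha \in V_I$, the difference $\alpha - \theta$ lies in $E_{F_I} = \gen_\real(\Pi \setminus \Pi_{\ov I})$ by Proposition \ref{chiusi}, and since $\partial I \subseteq \ov I$ this is contained in $\gen_\real(\Pi \setminus \Pi_{\partial I})$, which is the orthogonal complement of $\gen_\real\{\breve\omega_i : i \in \partial I\}$. Hence the orthogonal projection $\pi$ onto $\gen_\real\{\breve\omega_i : i \in \partial I\}$ satisfies $\pi(\alpha) = \pi(\theta)$ for every $\alpha \in V_I$, so $\mu = |V_I|\,\pi(\theta)$.

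To conclude the positivity, write $\theta = \pi(\theta) + u$ with $u = \sum_{k \notin \partial I} e_k \alpha_k \in \gen_\real(\Pi\setminus\Pi_{\partial I})$. Since $\pi(\theta) \perp \alpha_j$ for $j \notin \partial I$, pairing with $\alpha_j$ yields the linear system $\sum_k e_k (\alpha_k, \alpha_j) = (\theta, \alpha_j)$, whose matrix is the Gram matrix $B$ of $\Pi\setminus\Pi_{\partial I}$. This $B$ is block diagonal along the connected components of the corresponding Dynkin diagram, each block being the Gram matrix of the simple roots of an irreducible finite root system; by the standard fact that the inverse Cartan matrix has strictly positive entries in the irreducible case, $B^{-1}$ has non-negative entries. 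Combined with $(\theta, \alpha_j) \ge 0$ (dominance of $\theta$), this gives $e_k \ge 0$. Writing $\pi(\theta) = \sum_{j \in \partial I} c_j \breve\omega_j$, for $i \in \partial I$ we have
\begin{equation*}
c_i \;=\; (\pi(\theta), \alpha_i) \;=\; (\theta, \alpha_i) - \sum_{k \notin \partial I} e_k (\alpha_k, \alpha_i) \;\ge\; 0,
\end{equation*}
since $(\theta, \alpha_i) \ge 0$ and each term $-e_k(\alpha_k, \alpha_i) \ge 0$ because $(\alpha_k, \alpha_i) \le 0$ for distinct simple roots.

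The main obstacle is this last positivity step: the symmetry argument (the string decomposition) alone places $\mu$ only in the linear span, so one must exhibit the projection $\pi(\theta)$ explicitly and invoke the non-negativity of the inverse Cartan matrix to promote the conclusion from a subspace to a cone.
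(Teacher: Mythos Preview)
Your first stage coincides exactly with the paper's argument: reduce to $I=\partial I$ via Proposition \ref{intervallo}, decompose $V_I$ into $\alpha_j$-strings for $j\notin\partial I$, and apply Lemma \ref{centrato} to land $\mu$ in $\gen_\real\{\breve\omega_i : i\in\partial I\}$.

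Your second stage is correct but substantially more elaborate than the paper's. The paper dispatches the positivity in one line: for $i\in\partial I$ and $\alpha\in V_I$, the sum $\alpha+\alpha_i$ cannot be a root (since $c_i(\alpha)=m_i$ is already maximal), so by Proposition \ref{standard} $(\alpha,\alpha_i)\ge 0$; summing over $\alpha\in V_I$ gives $(\mu,\alpha_i)\ge 0$ directly. Your route---identifying $\mu=|V_I|\,\pi(\theta)$, then solving for the $\Pi\setminus\Pi_{\partial I}$-component of $\theta$ via the Gram matrix and invoking non-negativity of the inverse Cartan matrix---works, but it imports a nontrivial external fact precisely where the paper uses nothing beyond the definition of $V_I$. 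The advantage of your approach is that it makes the barycenter explicit as $|V_I|\,\pi(\theta)$, which is a pleasant byproduct; the cost is that the ``main obstacle'' you identify is in fact no obstacle at all once one notices that every $\alpha\in V_I$ already satisfies $(\alpha,\alpha_i)\ge 0$ for $i\in\partial I$.
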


\begin{proof}
By Proposition \ref{intervallo}, we may clearly assume that $I=\partial I$. 
Since $V_I$ is union of $\alpha_j$-string, for all $j\notin I$, the barycenter 
$\sum_{\alpha \in V_I}\alpha$ of $F_I$ is orthogonal to $\alpha_j$, for all
$j\notin I$, by Lemma \ref{centrato}. Hence it is in the span of the coweights
$\breve{\omega}_i$, $i\in I$. Moreover, by Proposition \ref{standard}, $(\alpha,
\alpha_i) \geq 0$ for all $\alpha \in V_I$ and $i \in I$, since
$\alpha+\alpha_i$ cannot be a root. Hence the barycenter has nonnegative scalar
product with all $\alpha_i$, $i \in I$, and we have the assertion.
\end{proof}
\smallskip

The following result provides several conditions equivalent to the fact that 
the coordinate face $F_i$ is a facet of the root polytope $\mathcal P_{\Phi}$.

\begin{thm}
\label{faccette}
Let $i \in [n]$. The following are equivalent.
\begin{enumerate}
 \item The coordinate face $F_i$ is a facet of $\mathcal P_{\Phi}$.
\label{1}

\item The standard parabolic subsystem $\widehat \Phi(\widehat\Pi\setminus \{\alpha_i\})$ is irreducible.
\label{0}

\item The minimal root $\eta_i$ of $V_i$ satisfies $(\eta_i, \breve{\omega}_j)
\neq m_j$, for all $j\neq i$.
\label{2}

\item $\ov{ \{i\} }= \{i\}$.
\label{6}

\item For all $j \neq i$, there exists $\alpha \in V_i$ such that $(\alpha, \breve{\omega}_j) \neq m_j$.
\label{3}

\item The set $V_i$ contains an $\alpha_j$-string which is non-trivial (i.e. of cardinality $> 1$), for all $j \neq i$.
\label{4}

\item $F_i$ is maximal among the coordinate faces $\{F_j \mid j=1, \ldots, n\}$.
\label{5}
\end{enumerate}
\end{thm}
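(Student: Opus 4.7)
My plan is to take condition (4), namely $\ov{\{i\}}=\{i\}$, as the central hub and prove it equivalent to each of (1), (2), (3), (5), (7) by unwinding results already established in this and the previous section; the link (5) $\Leftrightarrow$ (6) will then contain the only substantial work.

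For the easy equivalences with the hub: (1) $\Leftrightarrow$ (4) follows from Proposition \ref{chiusi}(2), which gives $\dim F_i=n-|\ov{\{i\}}|$, combined with the fact that $i\in\ov{\{i\}}$ always; (2) $\Leftrightarrow$ (4) is literally the definition of $\ov{\{i\}}$, since this set reduces to $\{i\}$ exactly when $(\widehat\Pi\setminus\{\alpha_i\})_0 = \widehat\Pi\setminus\{\alpha_i\}$, i.e.\ $\widehat\Phi(\widehat\Pi\setminus\{\alpha_i\})$ is irreducible; (3) $\Leftrightarrow$ (4) uses Remark \ref{radice-minima}(2), which gives $\ov{\{i\}}=\{k:c_k(\eta_i)=m_k\}$, combined with $c_k(\eta_i)=(\eta_i,\breve\omega_k)$ by duality of $\breve\Omega$ and $\Pi$.

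For (5) $\Leftrightarrow$ (4) and (7) $\Leftrightarrow$ (4) the key observation is that $F_i\subseteq F_j$ is equivalent to $V_i\subseteq V_j$ (since $F_i$ is the convex hull of $V_i$ and the functional $(\breve\omega_j,\cdot)$ is bounded above by $m_j$ on $\mathcal P_\Phi$), hence to $F_{\{i,j\}}=F_i$, hence to $j\in\ov{\{i\}}$ by Proposition \ref{intervallo}. So (7), the maximality of $F_i$ among coordinate faces, says no $j\neq i$ lies in $\ov{\{i\}}$; and (5), which is the negation of the existence of some $j\neq i$ with $V_i\subseteq V_j$, says exactly the same.

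The only step with genuine content is (5) $\Rightarrow$ (6); the converse (6) $\Rightarrow$ (5) is automatic, since two consecutive elements of a non-trivial $\alpha_j$-string have $c_j$-coordinates differing by a positive integer, so cannot both equal $m_j$. For (5) $\Rightarrow$ (6) I would chain through (2) to obtain irreducibility of $\widehat\Phi(\widehat\Pi\setminus\{\alpha_i\})$, so that for any $j\neq i$ there is a simple path $\alpha_0=\alpha_{j_0},\alpha_{j_1},\ldots,\alpha_{j_k}=\alpha_j$ in the extended Dynkin diagram avoiding $\alpha_i$. Applying Proposition \ref{standard}(1) to the two connected subsets $\{\alpha_{j_0},\ldots,\alpha_{j_{k-1}}\}$ and $\{\alpha_{j_0},\ldots,\alpha_{j_k}\}$ produces positive affine roots $\alpha$ and $\alpha+\alpha_j$ lying in the irreducible component of $\alpha_0$, and Lemma \ref{affine} translates them into two roots $\beta=\theta-\alpha_{j_1}-\cdots-\alpha_{j_{k-1}}$ and $\beta-\alpha_j$ of $V_i$ belonging to a common, hence non-trivial, $\alpha_j$-string. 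This is the main obstacle because it is the only place where an explicit combinatorial construction inside $V_i$ is needed; everything else reduces to bookkeeping.
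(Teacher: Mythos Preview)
Your proof is correct.  The organization is a bit tighter than the paper's: you link (5) and (7) directly to the hub (4) via Proposition~\ref{intervallo} (through the observation $F_i\subseteq F_j \Leftrightarrow F_{\{i,j\}}=F_i \Leftrightarrow j\in\ov{\{i\}}$, which implicitly uses Proposition~\ref{faccecoordinatedistinte} to pass from maximality to non-inclusion), whereas the paper routes (5) and (7) through (6) by the implication chains $(3)\Rightarrow(5)\Rightarrow(6)\Rightarrow(1)$ and $(1)\Rightarrow(7)\Rightarrow(6)$.  For the one substantive step, producing a non-trivial $\alpha_j$-string in $V_i$, the paper argues entirely inside $\Phi$: given $\alpha\in V_i$ with $c_j(\alpha)<m_j$ it runs a saturated chain $\alpha\lhd\cdots\lhd\theta$ in the root poset and locates a cover relation that adds $\alpha_j$.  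Your construction instead passes through the affine system via Lemma~\ref{affine}, manufacturing the string from a simple path $\alpha_0,\alpha_{j_1},\dots,\alpha_j$ in the extended Dynkin diagram of $\widehat\Pi\setminus\{\alpha_i\}$; this is equally short and arguably more explicit, since the witness $\theta-\alpha_{j_1}-\cdots-\alpha_{j_{k-1}}$ is written down directly.
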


\begin{proof}
(\ref{6}) is equivalent to (\ref{1}) by Proposition
\ref{facceparabolichestandard}, to (\ref{0}) by definition, and to (\ref{2}) by
Remark \ref{radice-minima}.
\par
We prove \ref{2} $\Rightarrow$ \ref{3}
$\Rightarrow$ \ref{4} $\Rightarrow$ \ref{1} and then 
\ref{1} $\Rightarrow$ \ref{5} $\Rightarrow$ \ref{4}. 
\par
The assertion \ref{2}
$\Rightarrow$ \ref{3} is trivial.
Fix $j \neq i$ and suppose there exists $\alpha \in V_i$ such that $(\alpha,
\breve{\omega}_j) \neq m_j$. 
Take a chain $\alpha=\gamma_0 \lhd \gamma_1 \lhd \cdots \lhd \gamma_t= \theta$
in the root poset. 
Since $(\alpha, \breve{\omega}_j) \neq m_j$ and the root poset is ranked, there
exists $r$ such that $\gamma_{r+1}= \gamma_r + \alpha_j$. 
Hence the $\alpha_j$-string through $\gamma_r$ 
is non-trivial and we have \ref{3} $\Rightarrow$ \ref{4}. Clearly \ref{4}
$\Rightarrow$ \ref{1} 
since the difference of two consecutive roots
in an $\alpha_j$-string is $\alpha_j$.
\par
It is trivial that  \ref{1} $\Rightarrow$ \ref{5}. Let us 
prove \ref{5} $\Rightarrow$ \ref{4} by contradiction. So assume that there
exists 
$j \neq i$ such that all $\alpha_j$-strings in $V_i$ are trivial.  
For  every $\alpha \in V_i$, the difference of two consecutive roots in any
chain in the root poset from $\alpha$ to $\theta$ cannot be $\alpha_j$. This
means that $(\alpha, \breve{\omega}_j) = m_j$ for all $\alpha \in V_i$. 
Hence $V_j \supseteq V_i$. Since all the coordinate faces $F_k$ are distinct by
Proposition \ref{faccecoordinatedistinte}, $F_i$ is not maximal.  
\end{proof}
\smallskip

Note that, if $m_i = 1$, then $\alpha_i$ coincides with the minimal root
$\eta_i$ of $V_i$, and hence $F_i$ is a facet. So, for example, 
in type $A$, all coordinate faces $F_i$ are facets. On the other hand, 
in $B_n$ ($n\geq 3$), $C_n$ ($n\geq 2$), and $D_n$ ($n\geq 4$) there are,
respectively, only 2, 1, and 3 coordinate
facets: $F_1$ and $F_n$ in $B_n$, $F_n$ in $C_n$, and $F_1$, $F_{n-1}$ and
$F_n$ in $D_n$.

\begin{rem}
\label{ordine}
By the results in the previous section, the partial order on the set of the
coordinate faces can be characterized as
 follows. Let  $i,j \in [n]$. Then the following are equivalent:
\begin{enumerate}
 \item $F_i \subseteq
F_j$,
\item $\ov{\{i\}} \supseteq \ov{\{j\}}$,
\item in the affine Dynkin diagram of $\Phi$, every minimal path  from
$\alpha_j$ to $\alpha_0$ contains $\alpha_i$. 
\end{enumerate}
\end{rem}

The Hasse diagrams of the posets of the coordinate faces under inclusion are
depicted in Tables \ref{Thasse} and \ref{ThasseEFG} for all types. The
numerations of the simple roots follow those in \cite{Bou}.

%%%%%%%%%%%% inizio tavole Hasse 
\begin{table}[b]
\caption{Hasse diagrams of the coordinate faces set for the classical Weyl
groups}
\label{Thasse}
\centering
\begin{tabular}{l| c} 
\\
\begin{tikzpicture}
\node{$A_n$};
\end{tikzpicture} 
&
\begin{tikzpicture}[scale=.8]
{\foreach \x in {0, 1, 2, 5, 6}
\draw[fill=white]{(\x,0) circle(3pt)};}
{\foreach \x in {3, 3.5, 4}
\draw[fill=black]{(\x,0) circle(1pt)};}
\node[below right] at (-.4,-.1)
{$\ssc F_1$};
\node[below right] at (.6,-.1)
{$\ssc F_2$};
\node[below right] at (1.6,-.1)
{$\ssc F_3$};
\node[below right] at (4.6,-.1)
{$\ssc F_{n-1}$};
\node[below right] at (5.6,-.1)
{$\ssc F_{n}$};
\end{tikzpicture}

\\ \hline\\

\begin{tikzpicture}
\node{$B_n$};
\end{tikzpicture}
&
\begin{tikzpicture}[scale=.8]
{\foreach \x in {0, 1, 4, 5}
\draw[fill=white]{(0,\x) circle(3pt)};}
\draw[fill=white]{(-2,2.5) circle(3pt)};
{\foreach \x in {2, 2.5, 3}
\draw[fill=black]{(0,\x) circle(1pt)};}
\draw{(0,0.1)--(0,0.9) (0,4.1)--(0,4.9)};
\draw{(-0.1,0.1)--(-1.9,2.4)};
\node[below right] at (-2.8,2.8)
{$\ssc F_1$};
\node[below right] at (.1,.3)
{$\ssc F_2$};
\node[below right] at (.1,1.3)
{$\ssc F_3$};
\node[below right] at (.1, 4.3)
{$\ssc F_{n-1}$};
\node[below right] at (.1, 5.3)
{$\ssc F_{n}$};
\end{tikzpicture}

\\ \hline\\

\begin{tikzpicture}
\node{$C_n$};
\end{tikzpicture}
&
\begin{tikzpicture}[scale=.8]
{\foreach \x in {0, 1, 4, 5}
\draw[fill=white]{(0,\x) circle(3pt)};}
{\foreach \x in {2, 2.5, 3}
\draw[fill=black]{(0,\x) circle(1pt)};}
\draw{(0,0.1)--(0,0.9) (0,4.1)--(0,4.9)};
\node[below right] at (.1,.3)
{$\ssc F_1$};
\node[below right] at (.1,1.3)
{$\ssc F_2$};
\node[below right] at (.1, 4.3)
{$\ssc F_{n-1}$};
\node[below right] at (.1, 5.3)
{$\ssc F_{n}$};
\end{tikzpicture}

\\ \hline\\
\begin{tikzpicture}
\node{$D_n$};
\end{tikzpicture}
&
\begin{tikzpicture}[scale=.8]
{\foreach \x in {0, 1, 4, 5}
\draw[fill=white]{(0,\x) circle(3pt)};}
\draw[fill=white]{(-0.7,6) circle(3pt)};
\draw[fill=white]{(0.7,6) circle(3pt)};
{\foreach \x in {2, 2.5, 3}
\draw[fill=black]{(0,\x) circle(1pt)};}
\draw{(0,0.1)--(0,0.9) (0,4.1)--(0,4.9) (-0.05,5.1)--(-0.65, 5.9)
(0.05,5.1)--(0.65,5.9)};
\node[below right] at (-1.6,6.3)
{$\ssc F_n$};
\node[below right] at (0.8,6.3)
{$\ssc F_{n-1}$};
\node[below right] at (-2.8,2.8)
{$\ssc F_1$};
\node[below right] at (.1,.3)
{$\ssc F_2$};
\node[below right] at (.1,1.3)
{$\ssc F_3$};
\node[below right] at (.1, 4.3)
{$\ssc F_{n-3}$};
\node[below right] at (.1, 5.3)
{$\ssc F_{n-2}$};
\draw{(-0.1,0.1)--(-1.9,2.4)};
\draw[fill=white]{(-2,2.5) circle(3pt)};
\end{tikzpicture}

\\ \hline\\
\end{tabular}
\end{table}
%%%%%%%%%%%%%%fine tavola Hasse classici

%%%%%%%%%%%%%%inizio tavola Hasse eccezionali
\begin{table}[]
\caption{Hasse diagrams of the coordinate faces set for the exceptional Weyl
groups}
\label{ThasseEFG}
\centering
\begin{tabular}{l| c} 

\begin{tikzpicture}[scale=.6]
\node at (0,0) {$E_6$};
\end{tikzpicture}
&
\begin{tikzpicture}[scale=.8]
{\foreach \x in {4, 5}
\draw[fill=white]{(0,\x) circle(3pt)};}
\draw[fill=white]{(-0.71,6) circle(3pt)};
\draw[fill=white]{(0.71,6) circle(3pt)};
\draw[fill=white]{(1.45,6.97) circle(3pt)};
\draw[fill=white]{(-1.45,6.97) circle(3pt)};
\draw{ (0,4.1)--(0,4.9) (-0.05,5.1)--(-0.65, 5.9)
(0.05,5.1)--(0.65,5.9)  (-0.79,6.09)--(-1.39, 6.89)
(0.79,6.09)--(1.39, 6.89)    };
\node[below right] at (-1.6,6.3)
{$\ssc F_3$};
\node[below right] at (0.8,6.3)
{$\ssc F_{5}$};
\node[below right] at (.1, 4.3)
{$\ssc F_{2}$};
\node[below right] at (.1, 5.3)
{$\ssc F_{4}$};
\node[below right] at (1.55, 7.25)
{$\ssc F_{6}$};
\node[below right] at (-2.3, 7.25)
{$\ssc F_{1}$};
\end{tikzpicture}

\\ \hline\\

\begin{tikzpicture}
\node{$E_7$};
\end{tikzpicture}
&
\begin{tikzpicture}[scale=.8]
{\foreach \x in {3, 4, 5, 6, 7, 8}
\draw[fill=white]{(0,\x) circle(3pt)};}
\draw[fill=white]{(-1.45,6.97) circle(3pt)};
\draw{(0,3.1)--(0,3.9) (0,4.1)--(0,4.9) (0,5.1)--(0,5.9)
(0,6.1)--(0,6.9) (0,7.1)--(0,7.9)
(-0.05,5.1)--(-1.39, 6.89)};
\node[below right] at (0.1,6.3)
{$\ssc F_{5}$};
\node[below right] at (.1, 4.3)
{$\ssc F_{3}$};
\node[below right] at (.1, 5.3)
{$\ssc F_{4}$};
\node[below right] at (.1, 7.3)
{$\ssc F_{6}$};
\node[below right] at (.1, 8.3)
{$\ssc F_{7}$};
\node[below right] at (-2.3, 7.25)
{$\ssc F_{2}$};
\node[below right] at (.1, 3.3)
{$\ssc F_{1}$};
\end{tikzpicture}

\\ \hline\\

\begin{tikzpicture}
\node{$E_8$};
\end{tikzpicture}
&
\begin{tikzpicture}[scale=.8]
{\foreach \x in {2, 3, 4, 5, 6, 7, 8}
\draw[fill=white]{(0,\x) circle(3pt)};}
\draw[fill=white]{(-1.45,7.97) circle(3pt)};
\draw{(0,2.1)--(0,2.9) (0,3.1)--(0,3.9) (0,4.1)--(0,4.9) (0,5.1)--(0,5.9)
(0,6.1)--(0,6.9) (0,7.1)--(0,7.9)
(-0.05,6.1)--(-1.39, 7.89)};
\node[below right] at (0.1,6.3)
{$\ssc F_{4}$};
\node[below right] at (.1, 4.3)
{$\ssc F_{6}$};
\node[below right] at (.1, 5.3)
{$\ssc F_{5}$};
\node[below right] at (.1, 7.3)
{$\ssc F_{3}$};
\node[below right] at (.1, 8.3)
{$\ssc F_{1}$};
\node[below right] at (-2.3, 8.25)
{$\ssc F_{2}$};
\node[below right] at (.1, 3.3)
{$\ssc F_{7}$};
\node[below right] at (.1, 2.3)
{$\ssc F_{8}$};
\end{tikzpicture}

\\ \hline\\

\begin{tikzpicture}
\node{$F_4$};
\end{tikzpicture}
&
\begin{tikzpicture}[scale=.8]
{\foreach \x in {2, 3, 4, 5}
\draw[fill=white]{(0,\x) circle(3pt)};}
\draw{(0,2.1)--(0,2.9) (0,3.1)--(0,3.9) (0,4.1)--(0,4.9) };
\node[below right] at (.1, 4.3)
{$\ssc F_{3}$};
\node[below right] at (.1, 5.3)
{$\ssc F_{4}$};
\node[below right] at (.1, 3.3)
{$\ssc F_{2}$};
\node[below right] at (.1, 2.3)
{$\ssc F_{1}$};
\end{tikzpicture}

\\ \hline\\

\begin{tikzpicture}
\node{$G_{2}$};
\end{tikzpicture}
&
\begin{tikzpicture}[scale=.8]
{\foreach \x in {2, 3}
\draw[fill=white]{(0,\x) circle(3pt)};}
\draw{(0,2.1)--(0,2.9)  };
\node[below right] at (.1, 3.3)
{$\ssc F_{1}$};
\node[below right] at (.1, 2.3)
{$\ssc F_{2}$};
\end{tikzpicture}
\\ \hline\\
\end{tabular}
\end{table}
%%%%%%%%%%%% fine tavole Hasse  

\section{Uniform description of $\mathcal P_{\Phi}$}

Clearly, the Weyl group $W$ acts on the set of the faces of $\mathcal
P_{\Phi}$.
 We say that a face $F$ is a \emph{parabolic face} if it is transformed into a
standard parabolic face by an element in $W$.  

\begin{pro}
\label{nonconiugate}
Two distinct standard parabolic faces 
cannot be transformed into one another by elements in the Weyl group $W$. 
\end{pro}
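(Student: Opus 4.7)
The plan is to attach to each standard parabolic face a canonical point in the closed fundamental Weyl chamber $\ov C$ whose coweight support is exactly $\partial I$, and then to use the fact that $\ov C$ is a strict fundamental domain for the $W$-action. The natural candidate is the unnormalized barycenter $b_I:=\sum_{\alpha\in V_I}\alpha$. Assuming $w F_I=F_J$, we have $wV_I=V_J$ and hence $wb_I=b_J$; by Proposition \ref{baryst}, both $b_I$ and $b_J$ lie in $\ov C$, so the fundamental-domain property forces $b_I=b_J$.

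The heart of the argument is to show that, writing $b_I=\sum_{i=1}^n c_i\breve\omega_i$, one has $c_i>0$ for every $i\in\partial I$ (Proposition \ref{baryst} already provides $c_i=0$ for $i\notin\partial I$ and $c_i\ge 0$ otherwise). Since $c_i=(b_I,\alpha_i)=\sum_{\alpha\in V_I}(\alpha,\alpha_i)$ and every summand is nonnegative, it is enough to exhibit a single $\alpha\in V_I$ with $(\alpha,\alpha_i)>0$. I will take $\alpha=\eta_I:=\min V_I$. By Remark \ref{radice-minima}, $-\eta_I+\delta$ is the highest root of the irreducible affine subsystem $\widehat\Phi_0(\widehat\Pi\setminus\Pi_I)$, hence a strictly positive integer combination $\sum_{\beta} k_\beta\beta$ over $\beta\in(\widehat\Pi\setminus\Pi_I)_0$. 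For $i\in\partial I$, each such $\beta$ is a simple root of $\widehat\Phi$ distinct from $\alpha_i$, so $(\beta,\alpha_i)\le 0$; and by the very definition of $\partial I$ some $\beta_0$ in $(\widehat\Pi\setminus\Pi_I)_0$ satisfies $(\beta_0,\alpha_i)<0$. Pairing with $\alpha_i$ and using $(\delta,\alpha_i)=0$ gives $(\eta_I,\alpha_i)=-\sum_\beta k_\beta(\beta,\alpha_i)>0$, as needed.

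Symmetrically, $b_J=\sum_{j\in\partial J}c_j'\breve\omega_j$ with $c_j'>0$. Linear independence of the fundamental coweights, together with $b_I=b_J$, then forces $\partial I=\partial J$, and Proposition \ref{intervallo} yields $F_I=F_{\partial I}=F_{\partial J}=F_J$. The main obstacle is precisely this strict-positivity step: Proposition \ref{baryst} only places $b_I$ inside the cone spanned by $\{\breve\omega_i:i\in\partial I\}$, but one needs $b_I$ in the \emph{relative interior} of that cone in order for $\partial I$ to be recoverable from $b_I$, and it is the contribution coming from $\eta_I$ that secures this.
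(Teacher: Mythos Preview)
Your proof is correct and follows the same overall strategy as the paper: place the barycenter $b_I=\sum_{\alpha\in V_I}\alpha$ in the closed fundamental chamber via Proposition~\ref{baryst}, use that $\ov C$ is a fundamental domain to deduce $b_I=b_J$ whenever $wF_I=F_J$, and conclude $F_I=F_J$.

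The only difference lies in the last step. The paper simply invokes the general convex-geometry fact that distinct faces of a polytope have distinct barycenters (the barycenter of $V_I$, being a convex combination with strictly positive coefficients of a set whose convex hull is $F_I$, lies in the relative interior of $F_I$, and relative interiors of distinct faces are disjoint). You instead sharpen Proposition~\ref{baryst} to a strict statement: by computing $(\eta_I,\alpha_i)>0$ for each $i\in\partial I$, you show that $b_I$ lies in the \emph{relative interior} of the cone spanned by $\{\breve\omega_i:i\in\partial I\}$, so $\partial I$ is recoverable from $b_I$ and Proposition~\ref{intervallo} finishes. Both routes are short; yours is a touch more self-contained (no appeal to the face-decomposition of a polytope) and yields the small bonus that the coweight support of $b_I$ is exactly~$\partial I$.
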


\begin{proof}
The barycenter of every standard parabolic face is in the closure of the 
fundamental Weyl chamber, by Proposition 
\ref{baryst}. Since the closure of the fundamental chamber is a fundamental
domain for the action of $W$, the barycenters of two standard 
parabolic faces in the same $W$-orbit must coincide. Since distinct faces have
distinct barycenters, we get the assertion.
\end{proof}
\smallskip

Given an arbitrary face $F$ of $\mathcal P_{\Phi}$, we let  $V_F := \Phi \cap F$
be the set of the roots in $F$ (so, for all $I \subseteq [n]$, $V_{F_I} = V_I$),
and $E_F:= \gen_\real \{\alpha - \alpha' \mid \alpha,\alpha' \in F\}$ be
the vector subspace underlying the smallest affine subspace containing $F$.
\par
The first main result of this section, Theorem \ref{nonspezzato}, is 
that for any face $F$, $V_F$ cannot be partitioned into two nonempty mutually
orthogonal sets. 
We remark that
this is false if we consider the orbit of $\theta$ instead
of the set of all roots, as we can see in $C_2$.

\begin{thm}
\label{nonspezzato}
Let $F$ be a face of $\mathcal P_{\Phi}$. The set $V_F$ of the roots in $F$ is
not the union of two non-trivial orthogonal subsets.
\end{thm}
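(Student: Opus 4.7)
I argue by contradiction: suppose $V_F = V_1 \sqcup V_2$ with $V_1 \perp V_2$ and both non-empty. If $F = \mathcal P_\Phi$ then $V_F = \Phi$, and the irreducibility of $\Phi$ rules out the partition; so $F$ is a proper face, and there is a supporting hyperplane $\{x : (x,u) = c\}$ with $(\gamma, u) \leq c$ on $\Phi$ and equality on $V_F$. Because $\mathcal P_\Phi = -\mathcal P_\Phi$ and the origin lies in the interior of $\mathcal P_\Phi$, one has $c > 0$; thus $(\alpha + \beta, u) = 2c > c$ shows $\alpha + \beta \notin \Phi$ for any two distinct $\alpha, \beta \in V_F$, and the same inequality forbids $\alpha = -\beta$. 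Proposition \ref{standard} then gives $(\alpha, \beta) \geq 0$, and for $\alpha \in V_1$ and $\beta \in V_2$ the orthogonality combined with the symmetry of the $\alpha$-string through $\beta$ (Proposition \ref{standard2}) forces $\beta \pm \alpha \notin \Phi$.

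I next adjust the simple system so that $V_F \subseteq \Phi^+$: the vector $\sigma = \sum_{\alpha \in V_F} \alpha$ satisfies $(\sigma, \alpha) \geq (\alpha, \alpha) > 0$ for every $\alpha \in V_F$, so a small generic perturbation of $\sigma$ is non-zero on every root and defines a positive system containing $V_F$. By Remark \ref{cortesuperflue}, the vertices of $\mathcal P_\Phi$ are exactly its long roots; consequently the vertices of $F$ are $V_F \cap \Phi_\ell$ and $F$ is their convex hull. Each $V_i$ must then contain a long root: if, say, $V_1 \cap \Phi_\ell = \emptyset$, then $F \subseteq \gen_\real V_2 = E_2$, forcing $V_1 \subseteq E_1 \cap E_2 = \{0\}$, a contradiction.

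Now fix long roots $v_1 \in V_1$ and $v_2 \in V_2$; they are orthogonal with $v_1 \pm v_2 \notin \Phi$, and the segment $[v_1, v_2]$ lies in $F$. Computing $|(1-t)v_1 + tv_2|^2 = ((1-t)^2 + t^2)|\theta|^2$ shows that the only candidates for a root on this segment are the endpoints and, when $|\theta_s|^2 = |\theta|^2/2$, the midpoint $\mu = (v_1+v_2)/2$. When $\mu$ is indeed a root, $\mu \in V_F$ and $(\mu, v_1) = (\mu, v_2) = |v_1|^2/2 > 0$, so $\mu$ is non-orthogonal to both $v_1$ and $v_2$, directly contradicting the partition. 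For the remaining orthogonal pairs (the simply laced types, $G_2$ where the statement is vacuous for lack of orthogonal long roots, and the configurations in $B$ and $F_4$ whose midpoint is not a root), I appeal to the connectedness of the $1$-skeleton of the polytope $F$: any edge of $\mathcal P_\Phi$ joining long vertices $v, v'$ must have $v - v' \in \Phi$, but $|v_1 - v_2|^2 = 2|\theta|^2$ is strictly larger than the squared length of any root, so no edge of $\mathcal P_\Phi$ joins $v_1$ to $v_2$; the same bound applied to any would-be edge-path shows that the vertex subsets $V_1 \cap \Phi_\ell$ and $V_2 \cap \Phi_\ell$ lie in different components of the $1$-skeleton of $F$, contradicting its connectedness.

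The main obstacle is to rigorously establish, in the last step, that an edge of $\mathcal P_\Phi$ between long vertices corresponds to a root difference, without circularity with the explicit $1$-skeleton analysis carried out later in Section~6. My plan is to replace the polytopal 1-skeleton argument by a positive-root-poset argument inside $V_F$: choose a chain $v_1 = w_0, w_1, \ldots, w_k = v_2$ obtained by going up from $v_1$ to a common upper bound of $v_1, v_2$ in $\Phi^+$ and then down to $v_2$, and use the crystallographic constraints on $(w_i, \alpha_j)$ together with Lemma \ref{centrato} to verify that consecutive roots satisfy $(w_{i-1}, w_i) > 0$, ruling out any orthogonal jump from $V_1$ to $V_2$ along the chain.
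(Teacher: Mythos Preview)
Your setup and the reductions in the first two paragraphs are sound, and the case where the midpoint $\mu=(v_1+v_2)/2$ happens to be a root is correctly dispatched. The remaining case, however, has a genuine gap that your proposed fix does not close.

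The chain $v_1=w_0,w_1,\dots,w_k=v_2$ you build in the root poset will in general leave $V_F$: with the positive system you chose (via the vector $\sigma$) there is no reason for $V_F$ to be a dual order ideal, so the intermediate $w_j$ need not lie on the supporting hyperplane, and the partition $V_F=V_1\sqcup V_2$ says nothing about them. Even if you instead chose the positive system so that the supporting covector $u$ is dominant (which would make $V_F$ a dual order ideal and keep the chain inside $V_F$), the claim that consecutive covers satisfy $(w_{i-1},w_i)>0$ is false: already in $B_2$ the short roots $\alpha_2$ and $\alpha_1+\alpha_2$ form an orthogonal cover pair. Lemma~\ref{centrato} concerns the sum over a full $\alpha$-string and gives no control over a single inner product $(w_{i-1},w_i)$. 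So neither the $1$-skeleton argument (circular with Section~6, as you note) nor the poset-chain replacement goes through.

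The paper avoids this bottleneck by working not with a chain inside $V_F$ but with all of $\Phi$. It first proves a stronger statement than anything you use: \emph{no} root of $\Phi$ can be simultaneously non-orthogonal to $V_1$ and to $V_2$ (this is where the supporting functional really earns its keep, through a short case analysis on the sign of $f(\alpha)$). From this it follows that $\Phi\cap\gen_\real V_F$ splits orthogonally as $\Phi_1\cup\Phi_2$. One then extends bases $\widetilde\Pi_1,\widetilde\Pi_2$ of $\Phi_1,\Phi_2$ to a single simple system $\widetilde\Pi$ of $\Phi$ (Proposition~\ref{standard3}) and uses the connectedness of the Dynkin diagram of $\widetilde\Pi$ to manufacture a root non-orthogonal to both sides, contradicting the first step. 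The required ``connectivity'' is thus found in the Dynkin graph of a suitable simple system for $\Phi$, not in the $1$-skeleton of $F$ or in the root poset.
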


\begin{proof}
By contradiction, suppose $V_F = V_1 \cup V_2$, where $V_1$ and $V_2$ are
non-empty sets such that every root in $V_1$ is orthogonal to every  root in
$V_2$. 
We first prove that, under this assumption, the following holds:
\par\noindent
(1) {\it there is no $\alpha\in\Phi$ that is
simultaneously not orthogonal to $V_1$ and $V_2$. }
\par
Assume, by contradiction, that there exist $\alpha \in \Phi$, $\beta_i \in
V_i$, $i=1,2$, such that $(\alpha,\beta_i)\neq 0$, $i=1,2$. We
may assume $(\alpha,\beta_1)<0$, hence $\alpha+\beta_1\in \Phi$.
Let $f$ be a linear functional such that 
$F=\{x\in \mathcal P_\Phi\mid f(x)=m_f\}$. We may assume $m_f
>0$. By the symmetry of $\mathcal P_\Phi$, $f$ 
takes values between $-m_f$ and $m_f$ on $\Phi$.
If $f(\alpha)=0$,  then $\alpha+\beta_1\in F$, and since 
$\alpha+ \beta_1\not\perp \beta_2$, we obtain that $\alpha+\beta_1\in V_2$. It
follows that  $\alpha+\beta_1\perp \beta_1$, hence $s_{\beta_1}(\alpha)=
s_{\beta_1}(\alpha + \beta_1 - \beta_1)= \alpha + \beta_1
- s_{\beta_1} (\beta_1)= \alpha + 2 \beta_1$. 
But $f(\alpha + 2 \beta_1)= 2 m_f$: a contradiction. Thus, 
$f(\alpha)\neq 0$. 
Since $\alpha$ and $\alpha+\beta_1$ are roots, we have that $-m_f\leq
f(\alpha)<0$. This implies that $f(\beta-\alpha)>m_f$, for all $\beta\in V_F$,  
hence  that $\beta-\alpha$ cannot be a root for all $\beta\in V_F$. Therefore,
 $(\alpha, \beta_2)<0$,
and hence $\alpha+\beta_1+ \beta_2$ is a root.  This forces $f(\alpha)=-m_f$,
hence $-\alpha\in V_F=V_1\cup V_2$: a contradiction, since $-\alpha$ is not
orthogonal to $V_1$ nor to $V_2$.  Thus  (1) is proven. 
\par
Now, let $\Phi_F := \Phi \cap \gen_\real V_F$,  
$\Phi_1 := \Phi \cap \gen_\real V_1$, and 
$\Phi_2 := \Phi \cap \gen_\real V_2$.
We prove that $\Phi_F= \Phi_1 \cup \Phi_2$.
Suppose $\alpha \in \Phi_F \setminus (\Phi_1 \cup \Phi_2)$. Being 
$\gen_\real V_F= \gen_\real V_1 + \gen_\real V_2$,
$\gen_\real V_1$ 
is the orthogonal complement of $\gen_\real V_2$ in $\gen_\real V_F$, and
vice-versa. Hence, there exists $\beta_i \in V_i$ such that 
$(\alpha,\beta_i)\neq 0 $, for $i=1,2$: this is can not happen, by (1), hence
$\Phi_F= \Phi_1 \cup \Phi_2$.
\par
Finally, we are going to prove that we can find $\alpha\in \Phi$ which is
simultaneously not orthogonal to $V_1$ and $V_2$: this will conclude the proof,
since contradicts (1).     
By the previous step, and Proposition \ref{standard3}, we can find three
subsets 
$\widetilde{\Pi}, \widetilde{\Pi}_1, \widetilde{\Pi}_2 \subseteq \Phi$, 
which are respectively bases 
of $\Phi$, $\Phi_1$ and $ \Phi_2$, and such that $\widetilde{\Pi}_1,
\widetilde{\Pi}_2 \subseteq \widetilde{\Pi}$.
Consider any pair of roots $\gamma_i\in\Pi_i$, $i=1, 2$. Since $\Phi$ is
irreducible, there is a simple path $L$  connecting $\gamma_1$ and $\gamma_2$ on
the Dynkin graph of $\widetilde \Pi$. Since
$\widetilde{\Pi}_1$ and $\widetilde{\Pi}_2$ are mutually orthogonal, $L$
contains at least one root other than its extremal points $\gamma_1$ and
$\gamma_2$. Let $\alpha := \sum_{\gamma \in
L \setminus \{\gamma_1, \gamma_2\} } \gamma$: then $\alpha \in \Phi$, and
$(\alpha,\gamma_i) < 0$, for $i= 1,2$.
Since, for $i=1,2$, $\gamma_i\in \gen_\real V_i$,  $\alpha$ cannot be orthogonal
to all the roots in $V_1$, nor to all the roots in $V_2$.
\end{proof}
\smallskip

This result implies, as a direct corollary, that the intersection of the linear
span of any face with the root system $\Phi$ is an irreducible parabolic
subsystem (Corollary \ref{nonspezzato2}). A second direct consequence
(Corollary \ref{mm}) is that, for any face $F$, 
the vector space $E_F$ is generated by roots, hence that $E_F$ can be
transformed  by some $w$ in $W$, into $\cap_{i\in I}\breve\omega_i^\perp$, for some
$I\subseteq [n]$. This last fact holds for
 the polytope of the orbit of any weight (\cite{V}).

\begin{cor}
\label{nonspezzato2}
Let $F$ be any face of $\mathcal P_{\Phi}$. Then $\gen_\real V_F\cap \Phi$ is an
irreducible parabolic subsystem of $\Phi$. 
\end{cor}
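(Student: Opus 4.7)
My plan is to derive the corollary in two quick steps: first obtaining the parabolic structure from Proposition \ref{standard3}, then getting irreducibility from Theorem \ref{nonspezzato} via a contrapositive argument.

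First I would set $\Phi':=\gen_\real V_F\cap \Phi$. By item (1) of Proposition \ref{standard3}, $\Phi'$ is a root system in the subspace it spans, namely $\gen_\real V_F$ (note $\gen_\real \Phi' = \gen_\real V_F$ because $V_F\subseteq \Phi'$ and $\Phi'\subseteq \gen_\real V_F$). By item (2) of the same proposition, any basis of $\Phi'$ extends to a basis of $\Phi$, so $\Phi'$ is a (standard) parabolic subsystem of $\Phi$ with respect to a suitable choice of basis; this is exactly what ``parabolic subsystem of $\Phi$'' means in the usage of the paper.

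For irreducibility, I argue by contradiction. Suppose $\Phi'=\Phi_1\sqcup \Phi_2$ with $\Phi_1\perp \Phi_2$ and both factors nonempty. Since every root of $\Phi'$ lies in exactly one of $\Phi_1,\Phi_2$, we get a decomposition $V_F=(V_F\cap\Phi_1)\sqcup (V_F\cap \Phi_2)$, and the two pieces are mutually orthogonal. The key subtlety is to rule out the case in which one of these pieces is empty: if, say, $V_F\cap \Phi_2=\emptyset$, then $V_F\subseteq \Phi_1$, hence $\gen_\real V_F\subseteq \gen_\real \Phi_1$, and therefore $\Phi_2\subseteq \Phi'\subseteq \gen_\real \Phi_1$; combined with $\Phi_2\perp \Phi_1$ this forces $\Phi_2=\emptyset$, contradicting our assumption. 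So both $V_F\cap \Phi_i$ are nonempty, and $V_F$ is partitioned into two nonempty mutually orthogonal subsets, contradicting Theorem \ref{nonspezzato}.

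The whole argument is short and the main (and only) obstacle is just checking that the empty-piece degenerate case is excluded: once that is handled, Theorem \ref{nonspezzato} supplies the contradiction without further work, and Proposition \ref{standard3} gives the parabolic structure essentially for free.
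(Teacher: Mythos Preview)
Your argument is correct and is exactly the route the paper intends: the corollary is stated without proof as a direct consequence of Theorem \ref{nonspezzato}, with the parabolic structure coming from Proposition \ref{standard3}. Your handling of the degenerate case (one piece of $V_F$ empty forces the corresponding component of $\Phi'$ to vanish) is the only detail that needs spelling out, and you do it correctly.
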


\begin{cor}
\label{mm}
Let $F$ be any face of $\mathcal P_{\Phi}$. Then
the subspace $E_F$ is spanned by roots in $\Phi$.
\end{cor}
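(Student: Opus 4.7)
My plan is to prove $E_F = \gen_\real(\Phi \cap E_F)$ by an equivalence-relation argument on $V_F$ combined with Theorem \ref{nonspezzato}. Set $\Psi := \Phi \cap E_F$ (a root subsystem by Proposition \ref{standard3}) and define on $V_F$ the relation $\beta \sim \beta'$ if and only if $\beta - \beta' \in \gen_\real \Psi$. Since $F$ is the convex hull of its vertices, which are long roots by Remark \ref{cortesuperflue} and therefore lie in $V_F$, the space $E_F$ is generated by differences of elements of $V_F$. Thus, if $\sim$ has a single class, we immediately get $E_F \subseteq \gen_\real \Psi \subseteq E_F$. The case $F = \mc P_\Phi$ is trivial, so I assume $F$ is proper, write $F = \mc P_\Phi \cap \{f = c\}$ for a linear functional $f$ with $c > 0$ (possible by the central symmetry of $\mc P_\Phi$), and observe $V_F = \Phi \cap \{f = c\}$ and $E_F \subseteq \ker f$.

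For the contradiction, suppose $\sim$ has at least two classes, pick one as $V_1$, and set $V_2 := V_F \setminus V_1$; both are nonempty. I will show $V_1 \perp V_2$. Fix $\beta \in V_1$, $\beta' \in V_2$: note $\beta \neq \beta'$ (different classes) and $\beta \neq -\beta'$ (since $f(\beta + \beta') = 2c \neq 0$). If $(\beta, \beta') > 0$, Proposition \ref{standard} applied to $\beta$ and $-\beta'$ yields $\beta - \beta' \in \Phi$; since this difference is in $E_F$ by construction, it lies in $\Psi$, forcing $\beta \sim \beta'$, a contradiction. If $(\beta, \beta') < 0$, Proposition \ref{standard} applied to $\beta, \beta'$ yields $\beta + \beta' \in \Phi$, but then $f(\beta + \beta') = 2c > c = \max_\Phi f$, which is impossible.

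Hence $(\beta, \beta') = 0$ for all $\beta \in V_1$, $\beta' \in V_2$, so $V_F = V_1 \cup V_2$ is a partition into two nonempty mutually orthogonal subsets, contradicting Theorem \ref{nonspezzato}. Therefore $\sim$ has a single class and the corollary follows. The main subtlety is in the second case, where one relies on the elementary identity $\max_{\mc P_\Phi} f = \max_\Phi f$ (coming from $\mc P_\Phi = \mathrm{conv}(\Phi)$) to rule out the putative root $\beta + \beta'$; beyond this, the argument is a careful application of Proposition \ref{standard} to the pairs $(\beta, -\beta')$ and $(\beta, \beta')$, together with the direct verification that a single $\sim$-class forces $E_F$ to be spanned by roots.
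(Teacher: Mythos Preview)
Your proof is correct and follows essentially the same approach as the paper: the paper defines a graph on $V_F$ with edges between non-orthogonal pairs, observes that an edge forces $\beta-\beta'\in\Phi\cap E_F$ (since $(\beta,\beta')<0$ is ruled out exactly as you do, by the supporting-functional bound), and then invokes Theorem~\ref{nonspezzato} to get connectedness, which is precisely your single-$\sim$-class conclusion. Your equivalence relation $\sim$ is just the transitive closure (indeed, a coarsening) of the paper's adjacency relation, so the two arguments are the same in substance and differ only in packaging.
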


\begin{proof}
Let $\Gamma_F$ be the graph having $V_F$  as vertex set
and where $\{\beta, \beta'\}\subseteq  V_F$ is an edge if and only if  $(\beta,
\beta') \neq 0$.  If   $(\beta, \beta') \neq 0$,
$\beta \neq \beta'$, then $(\beta, \beta') > 0$ since the sum of two roots in a
face cannot be a root, and hence $\beta - \beta' $ is a root.
By Theorem \ref{nonspezzato}, $\Gamma_F$ is connected; this implies that $E_F$
is spanned by roots: $\gen_\real(\Phi \cap E_F) = E_F$. 
\end{proof}
\smallskip
 
By Corollary \ref{mm}, together with Lemma \ref{affine}, we obtain a direct proof
that all faces are parabolic, that is, that any face of $\mc P_\Phi$ is the
transformed, by some element of $W$, of a
standard parabolic face $F_I$. As like as Corollary \ref{mm}, this result can be obtained (through our
results in the previous sections) as a special
case of Vinberg's
ones on the polytope of the orbit of a weight. However, we prefer to give 
here a self-contained and well detailed proof.

\begin{rem}\label{w0}
Recall that we denote by $w_0$ the longest element of $W$. It is well known
that 
$w_0(\Pi)=-\Pi$. In particular, for all $\beta\in \Phi$, $\text{\it ht}(\beta)=
\text{\it ht}(-w_0(\beta))$ and hence $w_0(\theta)=-\theta$. It follows that, 
if $w_0(\alpha_i)=-\alpha_{i'}$, then $m_i=m_{i'}$.
Moreover, since $w_0(\Pi\setminus\{\alpha_i\})=-\Pi\setminus\{w_0(\alpha_i)\}$, 
the $W$-invariance of the scalar product implies that, if $w_0(\alpha_i)=-\alpha_{i'}$, then $w_0(\breve\omega_i)=-\breve\omega_{i'}$. 
\end{rem}
\smallskip

\begin{thm}\label{tutteparaboliche2}
All faces of $\mathcal P_\Phi$ are parabolic. 
\end{thm}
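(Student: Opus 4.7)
The plan is to describe an arbitrary face $F$ as the locus of maximum of a linear functional on $\mc P_\Phi$, then transport that functional into the closure of the fundamental Weyl chamber by the action of $W$, and finally read off the resulting $F_I$ from the nonzero fundamental-coweight coordinates.

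If $F=\mc P_\Phi$ one takes $I=\emptyset$. Otherwise $F$ is a proper face, so by the definition recalled at the beginning of Section~3 it is the intersection of $\mc P_\Phi$ with a supporting affine hyperplane. Since $\mc P_\Phi$ is bounded, this hyperplane has the form $\{x\mid (x,v)=c\}$ for some $v\in\gen_\real\Phi\setminus\{0\}$ with $c=\max_{y\in \mc P_\Phi}(y,v)$. Choose $w\in W$ so that $w(v)$ lies in the closure of the fundamental chamber and write $w(v)=\sum_{i=1}^n a_i\breve\omega_i$ with all $a_i\geq 0$. The $W$-invariance of $\mc P_\Phi$ and of the scalar product yields
$$w(F)=\{x\in \mc P_\Phi\mid (x,w(v))=c\}.$$

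For every $\alpha\in\Phi$, splitting $\alpha$ into its positive and negative parts shows that $\theta-\alpha$ lies in $\sum_{i=1}^n\nat\alpha_i$; dominance of $w(v)$ therefore gives $(\alpha,w(v))\leq(\theta,w(v))$, and since $\mc P_\Phi$ is the convex hull of $\Phi$ we conclude $c=(\theta,w(v))=\sum_i a_i m_i$. Setting $I:=\{i\in[n]\mid a_i>0\}$, the equation $(\alpha,w(v))=c$ becomes
$$\sum_{i\in I}a_i\bigl(m_i-(\alpha,\breve\omega_i)\bigr)=0,$$
and since each $a_i>0$ while each summand $m_i-(\alpha,\breve\omega_i)\geq 0$, this forces $(\alpha,\breve\omega_i)=m_i$ for every $i\in I$; that is, $\alpha\in V_I$. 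The reverse inclusion being trivial, we obtain $w(F)\cap \Phi=V_I$.

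Finally, Remark~\ref{cortesuperflue} and the transitivity of $W$ on long roots give that the vertices of $\mc P_\Phi$ are precisely the long roots, hence the vertex set of any face of $\mc P_\Phi$ consists of the long roots contained in that face. Applied to $w(F)$ and to $F_I$, this yields the common vertex set $(V_I)_\ell$; since each of $w(F)$ and $F_I$ is the convex hull of its vertices, we conclude $w(F)=F_I$. The only delicate point in the plan is the identification, via the highest root, between the maximum value $c$ of the supporting functional and the combinatorics of a standard parabolic face; once the functional has been rotated into the dominant chamber, the indexing set $I$ is visibly the support of $w(v)$ in the coweight basis.
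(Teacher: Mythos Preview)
Your proof is correct, but it takes a genuinely different route from the paper's own argument.

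The paper proves the theorem by induction on the codimension of $F$, and the engine of that induction is Corollary~\ref{mm} (hence Theorem~\ref{nonspezzato}): knowing that $E_F$ is spanned by roots, one gets that $\Phi\cap E_F$ is a parabolic subsystem, and by induction one can assume the ambient face $\tilde F\supseteq F$ of codimension one less is already standard parabolic; the remaining work is to identify which hyperplane inside $E_{\tilde F}$ cuts out $F$, handling separately the two possible values of the supporting functional via the longest element of the relevant parabolic subgroup and Proposition~\ref{standard4}.

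Your argument bypasses all of this. You move the normal vector of a supporting hyperplane into the closed fundamental chamber, observe that the maximum of any dominant functional on $\Phi$ is attained at $\theta$, and then read off $I$ as the support of $w(v)$ in the coweight basis; the identification $w(F)=F_I$ then follows from the equality of long-root vertex sets. This is essentially the Vinberg-style argument for general weight polytopes that the paper explicitly acknowledges in the introduction to Section~5, where the authors say they deliberately keep their proof independent of it in order to exploit Theorem~\ref{nonspezzato}, a result specific to the root polytope. What you gain is brevity and elementarity: no induction, no appeal to the irreducibility result, and no case analysis on the two possible values of the functional. What the paper's approach buys is a self-contained derivation from their structural Theorem~\ref{nonspezzato}, which they need anyway for Lemma~\ref{corteinterne} and Corollary~\ref{mm3} in Section~6 and which does not hold for arbitrary weight polytopes.
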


\begin{proof}
Let $F$ be a face of dimension $\dim F=n-p$, with $1\leq p\leq n$.  We prove the
claim
by induction on $p$. If $p=1$, then by Corollary \ref{mm} and 
 Proposition \ref{standard3}, we get that $\Phi\cap E_F$ is a parabolic root
susbsystem of $\Phi$ of rank $n-1$. It follows that there exist $w\in W$ and 
 $i\in [n]$ such that $w(E_F)=\breve\omega_i^\perp$. Therefore, there exists
$a\in \real$ such that, for all $\beta\in w(F)$, $(\beta, \breve\omega_i)=a$. 
This forces $a=\pm m_i$. If $a=m_i$ we obtain that $w(F) =F_i$. 
Otherwise, by Remark \ref{w0},  $w_0 w (F)=F_{i'}$, where
$\alpha_{i'}=-w_0(\alpha_i)$.  \par  
Now, we assume $n\geq p>1$ and  let
$\breve\omega_I^\perp= \cap_{i\in I}\breve\omega_i^\perp$, for short.
Let $\tilde F$ be any face such  
that $F\subseteq
\tilde F$ and $\dim \tilde F=n-p+1$. 
By induction, we may assume that 
$\tilde F$ is
standard parabolic, say $\tilde
F=F_I$, with $I\subsetneq [n]$, $\ov I=I$, and hence that 
$E_{\tilde F}=\breve\omega_I^\perp$, by Proposition
\ref{facceparabolichestandard}.
It follows that  $E_{F} \cap \Phi$ is contained in 
$\Phi(\Pi\setminus \Pi_I)$, and is a parabolic subsystem of it, of 
codimension
$1$. Therefore, there exist
$w\in W\la\Pi\setminus \Pi_I\ra$ and $i\in [n]\setminus I$ such that
$w(E_F)=\breve\omega_i^\perp \cap E_{\tilde F}$.
Since, by Proposition \ref{stabilizzatore}, $w(F_I)=F_I$, this implies that $ w
(F)=F_I\cap \{x\mid (x, \breve\omega_i)=a\}$,  for some 
$a\in \real$.\par
Now, let $\eta=\min V_I$ and  $l_i=c_i(\eta)$. Since $\theta\in F_I$, we obtain
that either $a=m_i$ or $a=l_i$. 
In the first case we are done, since then 
$w(F)=w(F_{I\cup\{i\}})$.
Then, we  assume that $a=l_i$. In this case,  
$\eta \in w(F)$, and $w(F) \subseteq \eta+\breve\omega_{I\cup\{i\}}^\perp$.
If $v$ is the longest element in $W\la\Pi\setminus \Pi_I\ra$, then, by
Proposition \ref{standard4} and Remark \ref{radice-minima},  applied to the root
system $\Phi(-(\Pi\setminus \Pi_I)\cup \{\theta\})$, with the root basis
$-(\Pi\setminus \Pi_I)\cup \{\theta\}$, we obtain that $v(\eta)=\theta$, and $v
(\breve\omega_{I\cup\{i\}}^\perp)=\breve\omega_{I\cup\{j\}}^\perp$, for some 
$j$ in $[n]\setminus I$. 
It follows that 
$vw(F)=vw(F_{I\cup\{j\}})$, hence $F$ is
parabolic.
\end{proof}
\smallskip

In particular, the orbits of the facets of $\mathcal P_\Phi$ are in bijection
with the simple roots that do not disconnect the extended Dynkin diagram, when
removed. 
In Table \ref{Tfacets}, we list explicitly, type by type, the simple roots corresponding to the standard parabolic facets. 
In the pictures, the black node corresponds to the affine root, the crossed node and its label denotes the simple root $\alpha_i$  to be removed  and its index $i$.

%%%%%%%%%%%% inizio tavola Tfacets
\begin{table}[]
\caption{Subdiagrams corresponding to the standard parabolic facets}
\label{Tfacets}
\centering
\begin{tabular}{l| l l}
\\
\begin{tikzpicture}
\node{$A_n$};
\end{tikzpicture}
&
\parbox[c]{5cm}{\raisebox{5pt}{
\begin{tikzpicture}[scale=.6]
\An
\draw{(2.8,.2)--(3.2,-.2) (2.8,-.2)--(3.2,.2)}; 
\node[below right] at (2.65,-.1){$\ssc i$\qquad $\ssc (1\leq i\leq n)$};
\end{tikzpicture}
}}
&
\\ \hline\\
\begin{tikzpicture}
\node{$B_n$};
\end{tikzpicture}
&
\parbox[c]{5cm}{\raisebox{10pt}{
\begin{tikzpicture}[scale=.6]
\Bn
\draw{(.05,.8)--(.55,.6) (.05,.6)--(.55,.8)}; 
\node[below] at (.3,.7){$\ssc 1$};
\end{tikzpicture}
}}
&
\parbox[c]{5cm}{\raisebox{10pt}{
\begin{tikzpicture}[scale=.6]
\Bn
\draw{(7.8,.2)--(8.2,-.2) (7.8,-.2)--(8.2,.2)}; 
\node[below] at (8,-.1){$\ssc n$};
\end{tikzpicture}
}}
\\ \hline\\
\begin{tikzpicture}
\node{$C_n$};
\end{tikzpicture}
&
\parbox[c]{5cm}{\raisebox{5pt}{
\begin{tikzpicture}[scale=.6]
\Cn
\draw{(7.8,.2)--(8.2,-.2) (7.8,-.2)--(8.2,.2)}; 
\node[below] at (8,-.1){$\ssc n$};
\end{tikzpicture}
}}
&
\\ \hline\\
\begin{tikzpicture}
\node{$D_n$};
\end{tikzpicture}&
\parbox[c]{5cm}{\raisebox{5pt}{
\begin{tikzpicture}[scale=.6]
\Dn
\draw{(.05,.8)--(.55,.6) (.05,.6)--(.55,.8)}; 
\node[below] at (.3,.7){$\ssc 1$};
\phantom{
\node[below] at (4,-.7){$\ssc r=n-1 \text{ or \ } r=n$};
}
\end{tikzpicture}
}}
&
\parbox[c]{5cm}{\raisebox{5pt}{
\begin{tikzpicture}[scale=.6]
\Dn
\draw{(7.45,.8)--(7.95,.6) (7.45,.6)--(7.95,.8)}; 
\node[below] at (7.75,.65){$\ssc r$};
\node[below] at (4,-.7){$\ssc r=n-1 \text{ or \ } r=n$};
\end{tikzpicture}
}}
\\ \hline\\
\begin{tikzpicture}[scale=.6]
\node at (0,0) {$E_6$};
\end{tikzpicture}&
\parbox[c]{5cm}{
\begin{tikzpicture}[scale=.6]
\Esei
\draw{(-.2,1.2)--(.2,1-.2) (-.2,1-.2)--(.2,1.2)}; 
\node[below] at (0,1-.1){$\ssc 1$};
\end{tikzpicture}
}
&
\parbox[c]{5cm}{
\begin{tikzpicture}[scale=.6]
\Esei
\draw{(4-.2,1.2)--(4+.2,1-.2) (4-.2,1-.2)--(4+.2,1.2)}; 
\node[below] at (4,1-.1){$\ssc 6$};
\end{tikzpicture}
}
\\ 
\\ \hline\\
\begin{tikzpicture}
\node{$E_7$};
\end{tikzpicture}
&
\parbox[c]{5cm}{\raisebox{5pt}{
\begin{tikzpicture}[scale=.6]
\Esette
\draw{(3-.2,-1+.2)--(3+.2,-1-.2) (3-.2,-1-.2)--(3+.2,-1+.2)}; 
\node[below] at (3,-1-.1){$\ssc 2$};
\end{tikzpicture}
}}
&
\parbox[c]{5cm}{\raisebox{5pt}{
\begin{tikzpicture}[scale=.6]
\Esette
\draw{(6-.2,.2)--(6+.2,-.2) (6-.2,-.2)--(6+.2,.2)}; 
\node[below] at (6,-.1){$\ssc 7$};
\node[below] at (3,-1-.1){$\phantom{\ssc 2}$};
\end{tikzpicture}
}}
\\ \hline\\
\begin{tikzpicture}
\node{$E_8$};
\end{tikzpicture}
&
\parbox[c]{5cm}{\raisebox{5pt}{
\begin{tikzpicture}[scale=.6]
\Eotto
\draw{(0-.2,.2)--(0+.2,-.2) (0-.2,-.2)--(0+.2,.2)}; 
\node[below] at (0,-.1){$\ssc 1$};
\node[below] at (2,-1-.1){$\phantom{\ssc 2}$};
\end{tikzpicture}
}}
&
\parbox[c]{5cm}{\raisebox{5pt}{
\begin{tikzpicture}[scale=.6]
\Eotto
\draw{(2-.2,-1+.2)--(2+.2,-1-.2) (2-.2,-1-.2)--(2+.2,-1+.2)}; 
\node[below] at (2,-1-.1){$\ssc 2$};
\end{tikzpicture}
}}
\\ \hline\\
\begin{tikzpicture}
\node{$F_4$};
\end{tikzpicture}
&
\parbox[c]{5cm}{\raisebox{5pt}{
\begin{tikzpicture}[scale=.6]
\Fquattro
\draw{(4-.2,.2)--(4+.2,-.2) (4-.2,-.2)--(4+.2,.2)}; 
\node[below] at (4,-.1){$\ssc 4$};
\end{tikzpicture}
}}
\\ \hline\\
\begin{tikzpicture}
\node{$G_{2}$};
\end{tikzpicture}
&
\parbox[c]{5cm}{
\begin{tikzpicture}[scale=.6]
\Gdue
\draw{(0-.2,.2)--(0+.2,-.2) (0-.2,-.2)--(0+.2,.2)}; 
\node[below] at (0,-.1){$\ssc 1$};
\end{tikzpicture}
}
\\
 \end{tabular}
\end{table}
%%%%%%%%%%%% fine tavola Tfacets 

We can now give a half-space representation
of $\mathcal P_\Phi$ and a more explicit expression for the $f$-polynomial of
$\mathcal P_\Phi$.

\begin{cor}
\label{mezzi-spazi}
The polytope  $\mathcal P_\Phi$ is the intersection of the half-spaces $\{x
\mid (x, w \breve\omega_i )\leq m_i, \forall w\in W, \forall i \in [n] \}$. A
minimal half-space representation is obtained considering only the $i\in [n]$
such that 
$\Phi(\widehat{\Pi}\setminus \{\alpha_i\})$ is irreducible. 
\end{cor}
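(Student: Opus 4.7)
The plan is to prove both inclusions separately and then address minimality by counting facets. For the easy inclusion $\mathcal P_\Phi \subseteq \{x\mid (x,w\breve\omega_i)\leq m_i\}$, I would observe that since $\mathcal P_\Phi$ is the convex hull of the roots, it is enough to check the inequalities on each $\alpha\in\Phi$. Writing $(\alpha, w\breve\omega_i)=(w^{-1}\alpha,\breve\omega_i)$, this reduces to showing that $(\beta,\breve\omega_i)\leq m_i$ for every $\beta\in\Phi$. For $\beta\in\Phi^+$, $(\beta,\breve\omega_i)=c_i(\beta)\leq c_i(\theta)=m_i$ since $\theta$ is the highest root; for $\beta\in-\Phi^+$ the value is nonpositive, hence also bounded by $m_i$.

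For the reverse inclusion, I would invoke the fact that $\mathcal P_\Phi$, being a bounded polytope, coincides with the intersection of the closed half-spaces supported by its facets. Each facet $F$ of $\mathcal P_\Phi$ is parabolic by Theorem \ref{tutteparaboliche2}, so $F=w(F_I)$ for some $w\in W$ and some standard parabolic facet $F_I$. By Theorem \ref{faccette} (equivalences (1) and (2)), the standard parabolic facets are exactly the coordinate faces $F_i$ with $\widehat\Phi(\widehat\Pi\setminus\{\alpha_i\})$ irreducible, and by definition $F_i$ is supported by the hyperplane $(x,\breve\omega_i)=m_i$. Applying $w$, the facet $w(F_i)$ is supported by $(x,w\breve\omega_i)=m_i$. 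Hence every facet-defining inequality of $\mathcal P_\Phi$ appears in our family, yielding $\mathcal P_\Phi\supseteq\{x\mid (x,w\breve\omega_i)\leq m_i,\ \forall w\in W,\ \forall i\in[n]\}$.

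For the minimality claim, I would argue that a half-space representation of a full-dimensional polytope is minimal if and only if it contains exactly one inequality for each facet. By Proposition \ref{nonconiugate}, distinct standard parabolic faces lie in distinct $W$-orbits, so the facet $W$-orbits are in bijection with the indices $i\in[n]$ for which $\widehat\Phi(\widehat\Pi\setminus\{\alpha_i\})$ is irreducible. Within the $W$-orbit of $F_i$, by Proposition \ref{stabilizzatore} the stabilizer of $F_i$ is $W\la\Pi\setminus\Pi_{\partial\{i\}}\ra$; one checks that when $F_i$ is a facet (so $\overline{\{i\}}=\{i\}$) one has $\partial\{i\}=\{i\}$, because $\alpha_i$ is not isolated in the extended Dynkin diagram and is adjacent to its complement $\widehat\Pi\setminus\{\alpha_i\}$. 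Thus the facets in the orbit of $F_i$ are parametrized by $W/W\la\Pi\setminus\{\alpha_i\}\ra$, and choosing one $w$ per left coset yields exactly one inequality per facet.

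The main obstacle is the bookkeeping in the last paragraph: verifying that the stabilizer of a coordinate facet $F_i$ is precisely $W\la\Pi\setminus\{\alpha_i\}\ra$ (that is, $\partial\{i\}=\{i\}$ whenever $\overline{\{i\}}=\{i\}$) so that distinct coset representatives produce distinct facets and hence distinct, irredundant half-spaces. Once this is in place, the equivalence between ``facets of $\mathcal P_\Phi$'' and the inequalities in our family is a direct bijection, and the minimality follows immediately.
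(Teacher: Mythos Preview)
Your proposal is correct and follows essentially the same route the paper takes. The paper states this corollary without a proof block, treating it as an immediate consequence of Theorem~\ref{tutteparaboliche2} (all faces are parabolic) together with Theorem~\ref{faccette} (characterizing which $F_i$ are facets); your argument makes this derivation explicit in exactly the expected way. The one mild difference is that for minimality the paper (in the remark just after the corollary) appeals directly to the stabilizer of $\breve\omega_i$ being $W\la\Pi\setminus\{\alpha_i\}\ra$, whereas you reach the same conclusion via Proposition~\ref{stabilizzatore} and the observation $\partial\{i\}=\{i\}$ when $\ov{\{i\}}=\{i\}$; both are fine, and your check that $\partial\{i\}=\{i\}$ is correct since the extended Dynkin diagram is connected.
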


Since the stabilizer of $\breve\omega_i$ is the parabolic subgroup $W\la \Pi
\setminus \{\alpha_i\} \ra$, in Corollary \ref{mezzi-spazi} we may let $w$ run
over any complete set of left coset representatives of $W$ modulo  $W\la \Pi
\setminus \{\alpha_i\} \ra$. For example, we may consider the set of
the minimal coset representatives $\{w \in W \mid D_r(w)\subseteq \{i\} \}$. 

\begin{cor}\label{fpoly}
For $\Gamma\subseteq\Pi$, let $\widehat \Gamma:=\Gamma\cup\{\alpha_0\}$, and set 
$$\mathcal I:=\{ \Gamma\subseteq \Pi\mid  \widehat\Phi(\widehat \Gamma) 
\text{ is irreducible} \}.$$
The $f$-polynomial of $\mathcal P_\Phi$ is
$$\sum\limits_{\Gamma\in \mathcal I} [W:W\la\Gamma^*\ra]t^{|\Gamma|},$$
where $W\la\Gamma^* \ra$ is the
subgroup of the Weyl group generated by the reflections with respect to 
the roots in $\Gamma^* := \Gamma \cup (\widehat \Gamma^{\perp} \cap \Pi)$. 
\end{cor}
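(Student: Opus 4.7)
The proof will follow by assembling the structural theorems already established and translating their indexing data.

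First I would observe that, by Theorem \ref{tutteparaboliche2} together with Proposition \ref{nonconiugate}, the set of faces of $\mc P_\Phi$ is the disjoint union, over all standard parabolic faces $F_I$, of the $W$-orbits $W\cdot F_I$. Hence the $f$-polynomial decomposes as
$$
f_{\mc P_\Phi}(t) \;=\; \sum_{F_I} [W:\text{\it Stab}_W F_I]\, t^{\dim F_I},
$$
where the sum is over a set of representatives of the standard parabolic faces. Using Proposition \ref{intervallo}, the natural choice of representatives is to take exactly one $I$ per orbit by requiring $I=\ov I$; equivalently, we parameterize the standard parabolic faces by the irreducible connected subdiagrams of the extended Dynkin diagram containing $\alpha_0$.

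Next I would implement the reindexing $\Gamma \leftrightarrow I$ via $\Gamma := \Pi\setminus \Pi_{\ov I}$ (so $\widehat\Gamma = \widehat\Pi\setminus \Pi_{\ov I} = (\widehat\Pi\setminus \Pi_I)_0$). Under this correspondence, Proposition \ref{intervallo} says exactly that the closed $I$'s (equivalently, the standard parabolic faces) are in bijection with $\Gamma \in \mc I$. By Proposition \ref{facceparabolichestandard}(2), $\dim F_I = n - |\ov I| = |\Gamma|$, which produces the power $t^{|\Gamma|}$.

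For the coefficient, I invoke Proposition \ref{stabilizzatore}: $\text{\it Stab}_W F_I = W\la\Pi\setminus \Pi_{\partial I}\ra$, so the orbit has size $[W:W\la\Pi\setminus \Pi_{\partial I}\ra]$. The main (and essentially only) computational step is to identify $\Pi\setminus \Pi_{\partial I}$ with $\Gamma^* = \Gamma \cup (\widehat\Gamma^\perp \cap \Pi)$. Unwinding the definition of $\partial I$ in the case $I=\ov I$: $\partial I$ is the set of indices $j$ with $\alpha_j \in \Pi_I$ and $\alpha_j \not\perp \widehat\Gamma$; hence
$$
\Pi\setminus \Pi_{\partial I} \;=\; (\Pi\setminus \Pi_I) \,\cup\, \{\alpha\in \Pi_I \mid \alpha\perp \widehat\Gamma\} \;=\; \Gamma \,\cup\, (\widehat\Gamma^\perp\cap \Pi) \;=\; \Gamma^*,
$$
the union being disjoint since no $\alpha\in \Gamma\subseteq \widehat\Gamma$ can be orthogonal to itself.

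Putting the three ingredients together yields
$$
f_{\mc P_\Phi}(t) = \sum_{\Gamma\in \mc I} [W:W\la \Gamma^*\ra]\, t^{|\Gamma|}.
$$
The only delicate point is the bookkeeping identification $\Pi\setminus\Pi_{\partial I}=\Gamma^*$; once this is checked, the formula is essentially a rewriting of the orbit-counting decomposition coming from Theorem \ref{tutteparaboliche2}, Proposition \ref{nonconiugate}, Proposition \ref{intervallo}, Proposition \ref{facceparabolichestandard}, and Proposition \ref{stabilizzatore}. One should also remember to include the degenerate cases $\Gamma=\Pi$ (giving $\mc P_\Phi$ itself with coefficient $1$) and $\Gamma=\emptyset$ (giving the single vertex orbit), both of which fall naturally into $\mc I$.
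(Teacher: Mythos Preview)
Your argument is correct and follows the same route as the paper, which simply cites Proposition \ref{chiusi}, Proposition \ref{stabilizzatore}, and Theorem \ref{tutteparaboliche2}; you have just spelled out the reindexing $\Gamma=\Pi\setminus\Pi_{\ov I}$ and the identification $\Pi\setminus\Pi_{\partial I}=\Gamma^*$ that the paper leaves implicit, and you rightly add Propositions \ref{nonconiugate} and \ref{intervallo} to justify that the standard parabolic faces index the orbits bijectively.
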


\begin{proof}
 It follows by Proposition \ref{chiusi} , Proposition
\ref{stabilizzatore}, and Theorem \ref{tutteparaboliche2}.
\end{proof}
\smallskip

We end this section with the following elegant statement that can
be obtained as an immediate
consequence of Theorems
\ref{tutteparaboliche2} and Proposition \ref{biezione-st-di} and 
is the
$\lambda=\theta$ case of Proposition 3.2
of \cite{V}.

\begin{thm}\label{combinatorio}
The orbits of the faces of $\mathcal P_\Phi$ under the action of $W$ are in
bijection 
with the connected subdiagrams of the extended Dynkin diagram that contain the
affine node. Equivalently, the orbits of the faces are in bijection with the
standard parabolic irreducible root subsystems of $\widehat{\Phi}$ that are not
included in~$\Phi$.
\end{thm}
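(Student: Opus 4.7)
The plan is to assemble the statement directly from three results already established in the paper: Theorem \ref{tutteparaboliche2}, Proposition \ref{nonconiugate}, and Proposition \ref{biezione-st-di} (which is Proposition \ref{intervallo}). First, by Theorem \ref{tutteparaboliche2}, every face of $\mathcal P_\Phi$ lies in the $W$-orbit of some standard parabolic face $F_I$, so the standard parabolic faces form a full set of representatives for the set of $W$-orbits of faces. Second, by Proposition \ref{nonconiugate}, two distinct standard parabolic faces lie in distinct $W$-orbits (their barycenters are distinct points of the closure of the fundamental chamber, which is a fundamental domain for $W$). Hence the $W$-orbits of faces are in bijection with the set of standard parabolic faces.

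Next I would invoke Proposition \ref{biezione-st-di}: the standard parabolic faces of $\mathcal P_\Phi$ are in bijection with the connected subdiagrams of the extended Dynkin diagram of $\Phi$ that contain the affine node $\alpha_0$. Composing the two bijections yields the first assertion of the theorem.

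For the equivalent formulation, I would translate connected subdiagrams of the extended Dynkin diagram directly into standard parabolic irreducible root subsystems of $\widehat\Phi$. A standard parabolic subsystem $\widehat\Phi(\Sigma)$, with $\Sigma \subseteq \widehat\Pi$, is irreducible precisely when $\Sigma$ induces a connected subdiagram of the extended Dynkin diagram, and it fails to be contained in $\Phi$ precisely when $\alpha_0 \in \Sigma$. Thus the connected subdiagrams of the extended Dynkin diagram containing the affine node correspond bijectively to the standard parabolic irreducible subsystems of $\widehat\Phi$ not contained in $\Phi$, which gives the second formulation.

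There is no real obstacle here, since every ingredient has been proved: the substantive content lies in Theorem \ref{tutteparaboliche2} (all faces are parabolic), which rested on Theorem \ref{nonspezzato}. The only bookkeeping point is to be explicit that \emph{connected subdiagram containing $\alpha_0$} and \emph{irreducible standard parabolic subsystem of $\widehat\Phi$ not contained in $\Phi$} are literally two names for the same object, so the two phrasings of the theorem are equivalent.
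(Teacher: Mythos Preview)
Your proposal is correct and follows essentially the same approach as the paper: the paper itself presents Theorem \ref{combinatorio} as an immediate consequence of Theorem \ref{tutteparaboliche2} and Proposition \ref{biezione-st-di}. You are in fact slightly more careful than the paper's one-line justification, since you explicitly invoke Proposition \ref{nonconiugate} to ensure that distinct standard parabolic faces lie in distinct $W$-orbits, a step the paper leaves implicit.
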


\section{Faces of low dimension}

We know that the short roots are convex linear combinations of the long ones
(Remark \ref{cortesuperflue}). 
Here we first see that if a short root lies on some face $F$, then it is a
convex combination of two long roots in $F$. Moreover, we prove that if some
short root lies on some face, then the faces of lowest dimension containing the
short roots form a single orbit of $W$, and we determine the standard parabolic
face in it. Then, we give a very peculiar property of the $1$-skeleton
of $\mathcal P_\Phi$, whose edges are made up of roots.

\begin{lem}\label{corteinterne}
Let $F$ be a face of $\mathcal P_\Phi$.
Then $F$ contains at least one long root. Moreover, if  $F$ contains some short
root, then the ratio between the squared lengths of the long and the short roots
is $2$, and each short root in $F$ is the halfsum of two long roots in $F$. \par
\end{lem}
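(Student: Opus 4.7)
The plan is to first reduce to standard parabolic faces via Theorem~\ref{tutteparaboliche2} (every face is $wF_I$ for some $w \in W$, and $W$ preserves root lengths), so I may work with $F = F_I$ throughout. The first assertion is then immediate from Proposition~\ref{laminimaelunga}, which provides $\theta \in V_I$ as the long maximum (the case $I = \emptyset$ being trivial since $\theta \in \mathcal P_\Phi$).

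For the second assertion, assume first that the squared-length ratio in $\Phi$ is $2$ (types $B_n$, $C_n$, $F_4$), and let $\beta \in V_F$ be a short root. I pick any long root $\alpha \in \Phi$ with $(\alpha,\beta) > 0$; such $\alpha$ exists since $\Phi_\ell$ spans $\gen_\real\Phi$ (negate $\alpha$ if necessary). The standard bound on the Cartan integer $2(\alpha,\beta)/|\alpha|^2$ for long/short root pairs forces this positive integer to equal $1$, so $(\alpha,\beta) = |\beta|^2$. Setting $\alpha' := 2\beta - \alpha = -s_\beta(\alpha)$, I obtain a root of the same length as $\alpha$, hence long, with $\beta = (\alpha+\alpha')/2$. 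That both $\alpha$ and $\alpha'$ actually lie in $V_F$ follows from a supporting-functional argument: if $f \leq m_f$ on $\mathcal P_\Phi$ and $f(\beta) = m_f$, then $f(\alpha)+f(\alpha') = 2m_f$ forces $f(\alpha) = f(\alpha') = m_f$.

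It remains to rule out $\Phi = G_2$. By Proposition~\ref{laminimaelunga}, $V_I$ is a principal filter with long minimum $\eta_I \in \{\alpha_2,\, 3\alpha_1+\alpha_2,\, \theta\}$. The principal filters above $\theta$ and above $3\alpha_1+\alpha_2$ consist only of long roots, while the principal filter above $\alpha_2$ contains both short roots $\alpha_1+\alpha_2$ and $2\alpha_1+\alpha_2$, whose sum $\theta$ is a root, contradicting the abelianity guaranteed by Proposition~\ref{laminimaelunga}. Hence no $V_I$ in $G_2$ contains a short root.

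The main obstacle is precisely this $G_2$ exclusion: the ratio-$2$ construction fails in $G_2$ because the relevant Cartan integer there is $3$ rather than $2$, producing a short $2\beta - \alpha$ instead of a long one. So a separate, case-based argument in $G_2$ is unavoidable, while the rest of the argument is uniform and hinges on the cleanly constructed reflection $\alpha' = -s_\beta(\alpha)$.
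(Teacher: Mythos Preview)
Your proof is correct, but it diverges from the paper's approach in the second assertion. The paper argues uniformly, without a type split: it invokes Theorem~\ref{nonspezzato} to produce, inside $V_F$ itself, a non-orthogonal pair $\beta$ (long) and $\beta'$ (short), then uses the coordinate description of $F_I$ to compute $c_i\big(s_{\beta'}(\beta)\big) = m_i(1-r)$ for each $i \in I$; since root coordinates lie in $[-m_i, m_i]$, this forces $r = 2$ and simultaneously places $-s_{\beta'}(\beta) = 2\beta' - \beta$ in $F_I$. You instead separate the ratio-$2$ types from $G_2$: in the former case you pick the long root $\alpha$ from all of $\Phi$ rather than from $V_F$, and then recover $\alpha, \alpha' \in F$ via the supporting-functional averaging $f(\alpha) + f(\alpha') = 2f(\beta)$; in $G_2$ you do a direct enumeration of principal abelian filters. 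Your approach avoids the heavier Theorem~\ref{nonspezzato} and treats \emph{every} short root in $F$ directly (the paper's argument, as literally written, addresses only the particular $\beta'$ furnished by Theorem~\ref{nonspezzato}); the paper's approach buys uniformity across all types and derives $r=2$ intrinsically from the face structure rather than importing it from the classification.
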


\begin{proof}
We may assume $F$ standard parabolic, say $F=F_I$, with $I\subseteq [n]$.
The first assertion is clear, since $\theta$ belongs to any standard parabolic face.
If $F$ contains also some short root,  then, by Theorem \ref{nonspezzato}, there
exists two non-orthogonal roots of different lengths, say $\beta$ and $\beta'$,
in $F$, with $\frac{(\beta, \beta)}{(\beta', \beta')}=r>1$. Then
$s_{\beta'}(\beta)=\beta-r\beta'$ is a root  and, since $\beta, \beta'\in V_I$,
$c_i( s_{\beta'}(\beta))= m_i-rm_i$ for all $i\in I$. This implies that $r=2$,
$c_i( s_{\beta'}(\beta))=-m_i$, and $-s_{\beta'}(\beta)=2\beta'-\beta\in F$.
Therefore $\beta'=\frac{1}{2}(\beta-s_{\beta'}(\beta))$, a convex combination of
the long roots $\beta$ and $-s_{\beta'}(\beta)$, both lying in $F_I$.
\end{proof}
\smallskip
The following proposition tells
how far the short roots are 
from being vertices. Recall that $\theta_s$ denotes the highest short root of
$\Phi$ and $I(\theta_s)=\{i \in [n] \mid c_i(\theta_s)= m_i\}$.

\begin{pro}\label{tetacorta}
Let $\Phi$ be not simply laced. 
Then the minimal dimension of the faces of  $\mathcal P_\Phi$ containing
a short root is $n - |I(\theta_s)|$. Moreover, the faces of dimension $n-|I(\theta_s)|$
containing a short root form a single $W$-orbit.
\end{pro}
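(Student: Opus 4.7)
By Theorem~\ref{tutteparaboliche2}, every face of $\mc P_\Phi$ lies in the $W$-orbit of some standard parabolic face, and since $W$ preserves root lengths, I would first reduce to analysing which $F_I$ contain a short root, then minimise the dimension among those.

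The key observation in the reduction is that $F_I$ contains a short root if and only if $\theta_s \in F_I$. Indeed, by Lemma~\ref{VIell}, $W\la\Pi \setminus \Pi_{\ov I}\ra$ acts transitively on $(V_I)_s$, and $\theta_s$ is the unique dominant short root, so any short root of $V_I$ can be translated inside $V_I$ to $\theta_s$. Since $\theta_s \in \mc P_\Phi$ and $(\theta_s, \breve\omega_i) = c_i(\theta_s)$, the condition $\theta_s \in F_I$ amounts to $c_i(\theta_s) = m_i$ for all $i \in I$, that is, $I \subseteq I(\theta_s)$.

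By Proposition~\ref{chiusi}, $\dim F_I = n - |\ov I|$, so the task becomes to maximise $|\ov I|$ over $I \subseteq I(\theta_s)$. The Remark following Proposition~\ref{ieoviuguale} shows that $\ov{\,\cdot\,}$ is a topological closure operator, in particular monotone, so $\ov I \subseteq \ov{I(\theta_s)}$; and Proposition~\ref{ieoviuguale} itself gives $\ov{I(\theta_s)} = I(\theta_s)$. Hence $|\ov I| \leq |I(\theta_s)|$, with equality precisely when $\ov I = I(\theta_s)$, which by Proposition~\ref{intervallo} is equivalent to $F_I = F_{I(\theta_s)}$. This gives the minimum dimension $n - |I(\theta_s)|$, attained among standard parabolic faces containing a short root uniquely by $F_{I(\theta_s)}$.

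For the second assertion, any face $F$ of minimum dimension containing a short root is a $W$-translate of some standard parabolic $F_J$ of the same dimension, and $F_J$ still contains a short root; by the above analysis $F_J = F_{I(\theta_s)}$. Hence all such $F$ lie in the single $W$-orbit of $F_{I(\theta_s)}$. I do not foresee any real obstacle: the heavy lifting (parabolicity of all faces, transitivity of the parabolic subgroup on short roots in $V_I$, the $\ov I$/dimension formula, and the fixed-point characterisation of closed sets) has already been done; the only minor check worth spelling out is that $I(\theta_s)\neq\emptyset$ when $\Phi$ is not simply laced, so that $F_{I(\theta_s)}$ is a proper face and the minimum dimension is strictly less than $n$.
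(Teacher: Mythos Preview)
Your argument is correct and follows essentially the same route as the paper: reduce to standard parabolic faces via Theorem~\ref{tutteparaboliche2}, observe that a standard parabolic face containing a short root must contain $\theta_s$, and identify the minimal such face as $F_{I(\theta_s)}$. Two small remarks. First, your reduction ``$F_I$ contains a short root $\Rightarrow \theta_s\in F_I$'' via Lemma~\ref{VIell} is valid but unnecessarily indirect: since $V_I$ is a dual order ideal (Proposition~\ref{principale}) and $\theta_s$ dominates every short root, the implication is immediate. Second, your closing ``minor check'' that $I(\theta_s)\neq\emptyset$ is not needed for the proof and is in fact \emph{false} in type $G_2$, where $\theta_s=2\alpha_1+\alpha_2$ while $\theta=3\alpha_1+2\alpha_2$, so $I(\theta_s)=\emptyset$; the proposition then correctly asserts that the minimal face containing a short root is the whole polytope, consistent with Lemma~\ref{corteinterne} (the short roots lie in the interior when the squared-length ratio is $3$). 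For the single-orbit statement, your argument (any minimum-dimension face is a $W$-translate of a standard parabolic one, which must be $F_{I(\theta_s)}$) is slightly different from the paper's (transitivity of $W$ on short roots together with the fact that two distinct faces of the minimum dimension intersect in a strictly smaller face, hence cannot share a short root), but both are short and equally effective.
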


\begin{proof}
 Any standard parabolic face $F$ containing a short root contains  $\theta_s$,
since $V_F$ is a dual order ideal in the root poset and $\theta_s$ is greater
than any other short root.  
By Theorem \ref{tutteparaboliche2}, it is enough to find the dimension of the
minimal standard parabolic face of  
$\mathcal P_\Phi$ containing $\theta_s$.  This is clearly $F_{I(\theta_s)}$, and its
dimension is $n - |I(\theta_s)|$ by Propositions \ref{ieoviuguale} and \ref{facceparabolichestandard}. The last
statement follows since the action of $W$ is transitive on the set of short
roots and the intersection of two distinct faces of dimension $n-|I(\theta_s)|$ does not
contain any short root.
\end{proof}
\smallskip

Suppose that $\Phi$ be not simply  laced. 
By Lemma \ref{corteinterne}, we already know that, if the ratio between the
squared lengths of the long and the short roots is 3,
 then the short roots lie in the inner  part of $\mathcal P_\Phi$, and hence $I(\theta_s)$
must be empty. On the other hand, if the ratio between 
the squared lengths of the long and the short  roots is 2, by a case-by-case
argument we see that $I \neq \emptyset$, and hence  
the short roots are always on the border of $\mathcal P_\Phi$.
In particular, for types $B_n$ and $F_4$, the minimal faces
containing short roots are the facets, while, for type $C_n$, the short roots
lie on the edges.
\par
We end this section with a direct description of the 1-skeleton of 
$\mathcal P_{\Phi}$, whose edges are, in fact, made up of roots.

\begin{cor}
\label{mm3}
Let $F$ be a face of $\mathcal P_{\Phi}$
of dimension 1. Then the roots in $F$ form a string with 2 or 3 roots: 
if $\gamma$ and $\gamma'$ are the vertices of $F$, then either 
\begin{enumerate}
\item $\gamma$ and $\gamma'$ are not perpendicular, there are no other roots in
$F$, and $\gamma - \gamma'$ is a root, or 
\item  $\gamma$ and $\gamma'$ are perpendicular, there is only a third root
$\gamma''$ in $F$ ($\gamma''$ short), and  
$\gamma - \gamma'' = \gamma'' - \gamma'$ is a root (so that $\gamma - \gamma'$
is twice a root). 
\end{enumerate}
\end{cor}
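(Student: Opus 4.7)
The plan is to reduce to the standard parabolic case. By Theorem \ref{tutteparaboliche2}, every face of $\mc P_\Phi$ is in the $W$-orbit of a standard parabolic face, so it suffices to treat $F = F_I$ for some nonempty $I \subseteq [n]$. The condition $\dim F_I = 1$, combined with Proposition \ref{facceparabolichestandard}, forces $|\ov I| = n - 1$, so there is a unique index $j$ with $\Pi \setminus \Pi_{\ov I} = \{\alpha_j\}$ and $E_{F_I} = \gen_\real \{\alpha_j\}$.

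Next I would identify $V_I$ with the $\alpha_j$-string through $\theta$: since $\theta \in V_I$ and $V_I \subseteq \theta + E_{F_I}$, every element of $V_I$ has the form $\theta - k\alpha_j$, and conversely each root of this form automatically satisfies the defining equations $(\breve\omega_i, x) = m_i$ of $F_I$ (because $I \subseteq \ov I$ and $j \notin \ov I$ imply $\breve\omega_i \perp \alpha_j$ for every $i \in I$). As $\theta$ is the highest root, $\theta + \alpha_j \notin \Phi$, so $V_I = \{\theta - k\alpha_j \mid 0 \leq k \leq q\}$ where $q = 2(\theta, \alpha_j)/(\alpha_j, \alpha_j)$, and $\dim F_I = 1$ forces $q \geq 1$.

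To rule out $q = 3$, note that $q = 3$ forces, via Cauchy-Schwarz applied to $(\theta, \alpha_j) = \tfrac{3}{2}(\alpha_j, \alpha_j)$, the ratio $(\theta, \theta)/(\alpha_j, \alpha_j) = 3$, i.e., a $G_2$-type situation. A direct computation then gives $(\theta - \alpha_j, \theta - \alpha_j) = (\alpha_j, \alpha_j)$, so $\theta - \alpha_j$ would be a short root lying in $F_I$; but Lemma \ref{corteinterne} requires the long-to-short squared-length ratio in $\Phi$ to be $2$ whenever a face contains a short root, contradicting ratio $3$. Hence $q \in \{1, 2\}$.

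The two cases then go as follows. For $q = 1$: $V_I = \{\theta, \theta - \alpha_j\}$; both are long since $\theta - \alpha_j = s_{\alpha_j}(\theta)$, their difference is the root $\alpha_j$, and $(\theta, \theta - \alpha_j) = (\theta, \theta) - (\alpha_j, \alpha_j)/2 > 0$, so they are not perpendicular, which is case~(1). For $q = 2$: $V_I = \{\theta, \theta - \alpha_j, \theta - 2\alpha_j\}$; an analogous length analysis forces $(\alpha_j, \alpha_j) = (\theta, \theta)/2$, hence $(\theta, \theta - 2\alpha_j) = (\theta, \theta) - 2(\alpha_j, \alpha_j) = 0$, so the endpoints are long and perpendicular, the middle root $\theta - \alpha_j$ is short and coincides with the midpoint of the endpoints, and $\theta - (\theta - \alpha_j) = (\theta - \alpha_j) - (\theta - 2\alpha_j) = \alpha_j \in \Phi$, so $\theta - (\theta - 2\alpha_j) = 2\alpha_j$ is twice a root, which is case~(2). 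The main obstacle is the clean exclusion of $q = 3$, resolved by the combined Cauchy-Schwarz and Lemma \ref{corteinterne} argument.
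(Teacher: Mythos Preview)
Your proof is correct but follows a different route from the paper's. The paper works with an arbitrary $1$-dimensional face $F$: it orders the roots in $V_F$ along the line, invokes Theorem~\ref{nonspezzato} to show consecutive roots are non-orthogonal (hence their differences are roots), concludes $V_F$ is a single root string, and then rules out a $4$-string by the observation that the two extreme roots of any $4$-string have negative scalar product, which is impossible for two roots lying in a common face. You instead reduce to a standard parabolic $F_I$ via Theorem~\ref{tutteparaboliche2}, identify $V_I$ explicitly as the $\alpha_j$-string through $\theta$ from the description of $E_{F_I}$, and exclude $q=3$ through Lemma~\ref{corteinterne}. Your approach trades the direct use of Theorem~\ref{nonspezzato} for the heavier reduction theorem and the length-ratio lemma, but in return yields explicit formulas for the endpoints and midpoint that make the verification of (1) and (2) completely mechanical.

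One small remark: your ``Cauchy--Schwarz'' step for $q=3$ literally gives $(\theta,\theta)/(\alpha_j,\alpha_j)\geq 9/4$, not $=3$; you are silently also using that the squared-length ratio in an irreducible crystallographic system lies in $\{1,2,3\}$. That is standard, so the argument is fine, but it would be cleaner to say so (or to deduce the ratio directly from the integrality of $\langle\alpha_j,\theta^\vee\rangle$).
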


\begin{proof}
Let $\gamma_0, \gamma_1, \ldots, \gamma_r$ be the roots in $F$.  Since $F$ has
dimension 1, we may assume that $\gamma_i = \gamma_0 + k_i \beta $, 
$k_i \in \mathbb R$, for all $i \in [r]$, with $1=k_1 < k_2 < \cdots < k_r$ and
$\real \beta= E_F$. 
Recall that two
roots in a face cannot have negative scalar product. 
Using Theorem \ref{nonspezzato}, we easily show that 
$(\gamma_i, \gamma_{i+1})>0$, 
for all $i\in [0,r-1]$. Then $\beta \in \Phi$ and  $\gamma_i - \gamma_{i-1}$ is
both a root and a positive multiple of $\beta$, for all $i \in [r]$, by
Proposition \ref{standard}. 
Then $\{\gamma_0, \gamma_1, \ldots, \gamma_r\}$ is the $\beta$-string through
$\gamma_0$.
It is well known that the strings can have only 1,2,3, or 4 roots.  In this
case, it clearly does not have 1 root and it cannot have 
4 roots since in a string of 4 roots the product of the first with the forth one
is negative. Hence we have done, cause the string with 
3 roots are necessarily  of the type in the statement (see \cite{Bou}, Ch. VI, \S 1, n. 4).  
\end{proof}
\smallskip

Any face $F$ of $\mathcal P_{\Phi}$ is a facet of the root polytope $\mathcal
P_{\Phi \cap \gen_\real F} \subseteq \gen_\real F$.  Since the root
system $\Phi \cap \gen_\real F$ is irreducible by Theorem
\ref{nonspezzato}, all
possible types of faces of dimension $k$ already occur in $(k+1)$-dimensional
polytopes of the type studied in this work. In particular, Corollary \ref{mm3}
can also be proved by an immediate case-by-case verification of the
2-dimensional root polytopes.
\par
Note that the standard parabolic edges of $\mathcal P_{\Phi}$ are made up of those roots
$\alpha_k$ which are adjacent to $\alpha_0$ in the affine Dynkin diagram of
$\Phi$ (Proposition \ref{chiusi} and Corollary \ref{mm3}).
Then, by Theorem
\ref{tutteparaboliche2}, the 1-skeleton of $\mathcal
P_{\Phi}$ is made up of the long or the short roots, depending on the
length of the simple roots adjacent to 
$\alpha_0$. In particular, in type $C$, the short roots occur, and in all the other cases the long ones.

\section{Acknowledgements}

The authors would like to thank the referee for his/her useful comments and suggestions. In particular, Proposition \ref{carat-min} has been written to address one of his/her questions.


\begin{thebibliography}{1}

\bibitem{ABHPS}
F. Ardila, M. Beck, S. Hosten, J. Pfeifle, and K. Seashore, Root polytopes and growth series of
root lattices, {SIAM J. Discrete Math.}, {\bf 25} (2011), 360-378.

\bibitem{BB}
A. Bj\"{o}rner, F. Brenti, {\em Combinatorics of Coxeter Groups},
Graduate Texts in Mathematics, {\bf 231}, Springer-Verlag, New
York, 2005.

\bibitem{Bou}
N. Bourbaki, {\em Groupes et Alg\'ebre de Lie}, Chapitres 4--6, Hermann, Paris,
1968.

\bibitem{CM2}
P. Cellini, M. Marietti, Root polytopes and abelian ideals, {\em J. Algebraic Combin.},  {\bf 39} (2014), 607-645.

\bibitem{CM3}
P. Cellini, M. Marietti, Polar root polytopes that are zonotopes, preprint.

\bibitem{C-P-Alg}
P. Cellini, P. Papi, Ad-nilpotent ideals of a Borel subalgebra,           
{\em J. of Algebra}, {\bf 225} (2000), 130-141.

\bibitem{Ch}
R. Chiriv\`i, Root polytopes and partition, arXiv:1210.8379 [math.CO]

\bibitem{C}
H. S. M. Coxeter, {\em Regular Polytopes}, Dover, 1973.

\bibitem{GGP}
I. M. Gelfand, M. I. Graev, A. Postnikov,
Combinatorics of hypergeometric functions associated with positive roots, 
{\em Arnold-Gelfand Mathematical Seminars: Geometry and Singularity Theory},
Birkh\"{a}user, Boston, (1996), 205-221.

\bibitem{G2}
B. Grunbaum, {\em Convex Polytopes}, Second edition edited by V. Kaibel,
V. Klee and G. M. Ziegler, Graduate Texts in
Mathematics, {\bf 221}, Springer-Verlag,
New York, 2003. 

\bibitem{H}
J. E. Humphreys, {\em Introduction to Lie Algebras and Representation Theory},
Graduate Texts in Mathematics, {\bf 9}, 
Springer-Verlag, New York-Heidelberg-Berlin, 1972.

\bibitem{Hum}
J. E. Humphreys,  {\em Reflection Groups and Coxeter Groups},
Cambridge Studies in Advanced Mathematics, {\bf 29}, Cambridge Univ.
Press, Cambridge,  1990.

\bibitem{Kac}
V. G. Kac, {\em Infinite dimensional Lie algebras}, Cambridge University Press,
Cambridge, 1990.

\bibitem{K-R}
A. Khare, T. Ridenour, Faces of Weght Polytopes and a generalization of a
Theorem of Vinberg,      
{\em Algebr. Represent. Theor.}, {\bf 15}(3) (2012), 593-611.

\bibitem{K}
B. Kostant, The set of Abelian ideals of a Borel subalgebra, Cartan
decomposition, and discrete series representations,           
{\em Internat. Math. Res. Notices}, {\bf 15} (1998), 225-252.

\bibitem{K1}
K. M\'esz\'aros, Root polytopes, triangulations, and the subdivision algebra, I,
{\em Trans. Amer. Math. Soc.}, {\bf 363} (2011), 4359-4382.

\bibitem{K2}
K. M\'esz\'aros, Root polytopes, triangulations, and the subdivision algebra, II,
{\em Trans. Amer. Math. Soc.}, {\bf 363} (2011), 6111-6141.

\bibitem{V}
E.B. Vinberg, On certain commutative subalgebras of a
universal enveloping algebra,             
{\em Math. USSR Izv.}, {\bf 36}(1) (1991), 1-22.

\end{thebibliography}
\end{document}